\documentclass[a4paper, 11pt, twoside]{amsart}

\usepackage[top=3cm, bottom=3cm, left=3cm, right=3cm]{geometry}
\usepackage[pdftex]{graphicx}
\pdfsuppresswarningpagegroup=1
\usepackage{tikz}
\usetikzlibrary{shapes, arrows, arrows.meta, decorations.markings}

\usepackage{amsmath}
\usepackage{amsthm}
\usepackage{amssymb}
\usepackage{mathtools}
\usepackage[integrals]{wasysym}
\usepackage{stmaryrd}

\usepackage{bbm}
\usepackage[mathcal]{euscript}

\usepackage[utf8]{inputenc}
\usepackage[T1]{fontenc}
\usepackage[english]{babel}
\usepackage{cite}
\usepackage{lmodern}

\usepackage{calc}
\usepackage[inline]{enumitem}
\usepackage[normalem]{ulem}
\usepackage{url}

\usepackage[hidelinks, bookmarks, bookmarksnumbered, pdfstartview={XYZ null null 1.00}]{hyperref}

\newcommand{\theoname}{Theorem}
\newcommand{\lemmname}{Lemma}
\newcommand{\coroname}{Corollary}
\newcommand{\propname}{Proposition}
\newcommand{\definame}{Definition}

\newcommand{\remkname}{Remark}
\newcommand{\explname}{Example}

\theoremstyle{plain}
\newtheorem{theorem}{\theoname}[section]
\newtheorem{lemma}[theorem]{\lemmname}
\newtheorem{corollary}[theorem]{\coroname}
\newtheorem{proposition}[theorem]{\propname}
\newtheorem*{claim}{Claim}
\theoremstyle{definition}
\newtheorem{definition}[theorem]{\definame}
\newtheorem{remark}[theorem]{\remkname}

\newtheorem{notation}[theorem]{Notation}

\DeclareMathOperator{\Lip}{Lip}

\DeclareMathOperator{\diverg}{div}

\DeclareMathOperator{\diff}{d\!}

\DeclarePairedDelimiter{\abs}{\lvert}{\rvert}

\newcommand{\suchthat}{\ifnum\currentgrouptype=16 \mathrel{}\middle|\mathrel{}\else\mid\fi}

\DeclareMathOperator{\MFG}{MFG}
\DeclareMathOperator{\OCP}{OCP}
\DeclareMathOperator{\Adm}{Adm}
\DeclareMathOperator{\Opt}{Opt}

\setcounter{totalnumber}{50}
\setcounter{topnumber}{50}
\setcounter{bottomnumber}{50}

\setcounter{secnumdepth}{6}
\setcounter{tocdepth}{6}
\numberwithin{figure}{section}

\begin{document}

\setlength{\parskip}{1pt plus 1pt minus 1pt}

\newlist{hypotheses}{enumerate}{1}
\setlist[enumerate, 1]{label={\textnormal{(\alph*)}}, ref={(\alph*)}, leftmargin=0pt, itemindent=*}
\setlist[enumerate, 2]{label={\textnormal{(\roman*)}}, ref={(\roman*)}}
\setlist[description, 1]{leftmargin=0pt, itemindent=*}
\setlist[itemize, 1]{label={\textbullet}, leftmargin=0pt, itemindent=*}
\setlist[hypotheses]{label={\textup{(H\arabic*)}}, leftmargin=*}

\title{Multi-population minimal-time mean field games}

\author{Saeed Sadeghi Arjmand}
\address{CMLS, \'Ecole Polytechnique, CNRS, Universit\'e Paris-Saclay, 91128, Palaiseau, France, \& Universit\'e Paris-Saclay, CNRS, CentraleSup\'elec, Inria, Laboratoire des signaux et syst\`emes, 91190, Gif-sur-Yvette, France.}
\email{saeed.sadeghi-arjmand@polytechnique.edu}

\author{Guilherme Mazanti}
\address{Universit\'e Paris-Saclay, CNRS, CentraleSup\'elec, Inria, Laboratoire des signaux et syst\`emes, 91190, Gif-sur-Yvette, France.}
\email{guilherme.mazanti@inria.fr}

\begin{abstract}
In this paper, we consider a mean field game model inspired by crowd motion in which several interacting populations evolving in $\mathbbm R^d$ aim at reaching given target sets in minimal time. The movement of each agent is described by a control system depending on their position, the distribution of other agents in the same population, and the distribution of agents on other populations. Thus, interactions between agents occur through their dynamics. We consider in this paper the existence of Lagrangian equilibria to this mean field game, their asymptotic behavior, and their characterization as solutions of a mean field game system, under few regularity assumptions on agents' dynamics. In particular, the mean field game system is established without relying on semiconcavity properties of the value function.
\end{abstract}

\keywords{Mean field games, asymptotic behavior, optimal control, Lagrangian equilibrium, MFG system}
\subjclass[2020]{49N80, 35Q89, 93C15, 35B40, 35A01}

\maketitle
\hypersetup{pdftitle={Multi-population minimal-time mean field games}, pdfauthor={}}

\tableofcontents

\section{Introduction}
Mean field games (MFGs for short) are differential games with a continuum of agents assumed to be rational, indistinguishable, and influenced only by an averaged behavior of other agents through a mean-field type interaction. Following previous works in the economics literature on games with infinitely many agents \cite{Aumann1964Markets, Aumann1974Values, Jovanovic1988Anonymous}, the theory of mean field games has been introduced in 2006 by the simultaneous works of Jean-Michel Lasry and Pierre-Louis Lions \cite{LasryLionen, LasryLionsfr1, LasryLionsfr2}, and of Peter E.~Caines, Minyi Huang, and Roland P.~Malhamé \cite{Huang2003Individual, HuangMalhame, 2HuangMalhame}, motivated by problems in economics and engineering and with the goal of approximating Nash equilibria of games with a large number of symmetric agents. Since their introduction, mean field games have been extensively studied in the literature and several research topics have been addressed, both from theoretical and applied perspectives. The main goal is typically to study equilibria of such games, which are usually characterized as solutions of a system of PDEs, called \emph{MFG system}. We refer to \cite{CardaliaguetNotes, Carmona2018ProbabilisticI, Carmona2018ProbabilisticII, Cardaliaguet2019Master, Gomes2016Regularity} for more details and further references on mean field games.

In this paper, we consider a mean field game model inspired by crowd motion in which a multi-population crowd wishes to arrive at given target sets in minimal time. Motivated by modeling, control, and optimization objectives, the mathematical analysis of crowd motion is the subject of a very large number of works from diverse perspectives \cite{Maury2019Crowds, Gibelli2018Crowd, Cristiani2014Multiscale, Muntean2014Collective, DhelbingIfarkas, LFHenderson, Rosini2013Macroscopic, PiccoliBenedettoTosin, Helbing1995Social}. Among other points of view commonly adopted in the literature, the macroscopic modeling of crowds consists in approximating the location of the finitely many agents in the crowd by a continuous distribution, which is usually assumed to evolve according to some conservation law, and is the natural framework for a mean field game model of crowd motion.

Some previous works on mean field games, such as \cite{Achdou2017YvesBardiMartinoCirantMarco, Achdou2019Lasry, Carlini2018ElisabettaSilva, MeszarosAlparRichard2015SilvaFrancisco, Lachapelle2011Mean, Burger2013Mean, CardaliaguetPierre2, BenamouCarlierSantam, Mazanti2019Minimal, Dweik2020Sharp, DucasseSecond}, have considered mean field games for, or related to, crowd motion. For instance, \cite{Lachapelle2011Mean} proposes a MFG model for a two-population crowd with trajectories perturbed by additive Brownian motion and considers both their stationary distributions and their evolution on a prescribed time interval. Other works also considered multi-population MFGs, such as \cite{Achdou2017YvesBardiMartinoCirantMarco, Carlini2018ElisabettaSilva}, which study in particular a two-population MFG model motivated by urban settlements. The work \cite{Burger2013Mean} considers the fast exit of a crowd, whose agents are perturbed by additive Brownian motion, and proposes a mean field game model, which is studied numerically. Even though \cite{CardaliaguetPierre2} is not originally motivated by the modeling of crowd motion, the MFG model studied in that reference presents a density constraint, preventing high concentration of agents, which is a natural assumption in some crowd motion models. We refer to \cite{MeszarosAlparRichard2015SilvaFrancisco} for second-order mean field games with density constraints. Numerical simulations for some variational mean field games related to crowd motion are presented in \cite{BenamouCarlierSantam}.

The present work is more closely related to \cite{Mazanti2019Minimal, Dweik2020Sharp, DucasseSecond}, which present some particular characteristics with respect to most of the MFG literature. Firstly, contrarily to a large part of the MFG literature but similarly to \cite{Burger2013Mean} and some other works with motivation unrelated to crowd motion, such as \cite{Graber2020Mean}, references \cite{Mazanti2019Minimal, Dweik2020Sharp, DucasseSecond} consider mean field games in which agents do not necessarily stop all at the same time, but may instead have different stopping times, which are actually the main part of the optimization criterion. Secondly, most of MFG models consider that agents are free to choose their speed, with high speeds penalized in the optimization criterion of each agent, but \cite{Mazanti2019Minimal, Dweik2020Sharp, DucasseSecond} assume instead that agents may move only up to a certain maximal speed, which depends on the average distribution of agents around their position. As detailed in \cite{Mazanti2019Minimal}, this assumption is intended to model crowd motion situations in which an agent may not be able to move faster by simply paying some additional cost, since the congestion provoked by other agents may work as a physical barrier for the agent to increase their speed. We refer to \cite{Mazanti2019Minimal, Dweik2020Sharp, DucasseSecond} for more details on the motivation of the model and its relation to other crowd motion models.

Similarly to \cite{Mazanti2019Minimal, Dweik2020Sharp, DucasseSecond}, the MFG studied in this work assumes that agents want to minimize their time to reach a certain target set, their optimal control problem being thus with a free final time, and that their maximal speed is bounded in terms of the density of agents around their position. Several novelties are considered in the MFG from the present paper. Firstly, we assume that the agents taking part in the game are not all identical, but are instead subdivided in $N$ populations. Each population $i \in \{1, \dotsc, N\}$ may present different dynamics and different target sets. This additional assumption brings no major difficulty in the analysis of the MFG but allows for the representation of more realistic situations, such as two populations in a corridor starting at opposite sides, each one wanting to reach the other side in minimal time. We also allow for the interaction of an agent with other agents of the same population to be different than their interaction with agents of other populations, in order to model the fact that it may be easier to move with other agents that want to reach the same target, and hence move in the same general direction, than to move in a crowd of people going on different directions.

Another novelty from the present paper with respect to \cite{Mazanti2019Minimal, Dweik2020Sharp, DucasseSecond} is to consider that agents move on $\mathbbm R^d$, instead of on a compact subset of $\mathbbm R^d$. Lack of compactness of the state space brings additional difficulties in the analysis of the MFG, in particular since we are interested in situations in which the initial distribution of agents is not necessarily compactly supported, but these difficulties can be overcome by exploiting suitable properties of optimal trajectories. In particular, the time for an agent to reach their target set is no longer uniformly bounded, but we are able to provide sharp bounds on the convergence rate of the distribution of agents towards their limit distribution concentrated in the target set. We also remark that, contrarily to \cite{Mazanti2019Minimal, Dweik2020Sharp, DucasseSecond}, the target sets are not assumed to be the boundary of a compact domain, but can be arbitrary nonempty closed subsets of $\mathbbm R^d$.

Finally, we also relax the regularity assumptions on the dynamics of agents from \cite{Mazanti2019Minimal, Dweik2020Sharp}, requiring only continuity with respect to the distributions of other agents and Lipschitz continuity with respect to the space variable. In those references, similar assumptions were used to prove existence of Lagrangian equilibria, but additional regularity assumptions were required to characterize such equilibria as solutions of a MFG system. These additional assumptions were used in \cite{Mazanti2019Minimal, Dweik2020Sharp} to obtain semiconcavity of the value function of the optimal control problem solved by each agent, which is a key step to obtain differentiability of the value function along optimal trajectories and hence deduce that the velocity field in the continuity equation of the MFG system is well-defined and continuous on the support of the distribution of agents. By not requiring these additional regularity assumptions, the present paper uses instead different techniques to study the velocity field appearing in the continuity equation, based on a detailed study of some properties of optimal trajectories, which allows us to obtain the MFG system without relying on the semiconcavity of the value function. This is probably one of the main contributions of the present paper and brings several interesting perspectives, in particular since these techniques might be adapted to other MFG models in which semiconcavity of the value function is known not to hold, such as in some MFGs with state constraints. We also refer the interested reader to \cite{CannarsaPiermarco, Cannarsa2019C11, Cannarsa2021Mean} for other approaches for dealing with MFGs with state constraints.

The notion of MFG equilibrium is formulated in this paper in a Lagrangian setting, which describes the motion of agents by a measure on the set of all possible trajectories, instead of the more classical approach consisting in describing the evolution of agents through a time-dependent measure on the space state. The Lagrangian approach is classical in optimal transport problems (see, e.g., \cite{Ambrosio1, FilippoSantam, CedricVillani, YannBrenier, CarlierJimenez, BernotMarcCaselles}) and has also recently been used in several works on mean field games \cite{BenamouCarlierSantam, CannarsaPiermarco, CardaliaguetPierre, CardaliaguetPierre2, Mazanti2019Minimal, Dweik2020Sharp}.

This paper is organized as follows. Section~\ref{SecNotation} settles the main notations used in the paper, while Section~\ref{sec MFG model} describes the mean field game model considered here together with its associated optimal control problem solved by each agent, and presents the main tools used in the sequel. Section~\ref{sec OCP} presents the important results on the optimal control problem needed for the sequel of the paper. The main results on our MFG model are provided in Section~\ref{sec MFGs}, which proves the existence of an equilibrium, studies its asymptotic behavior at large times, and shows that the distribution of the agents and the value function of the optimal control problem solved by each agent can be characterized by the system of partial differential equations known as MFG system.

\section{Notation and preliminary definitions}
\label{SecNotation}

In this paper, $N$ and $d$ are fixed positive integers. The set of nonnegative real numbers is denoted by $\mathbbm R_+$. We denote the usual Euclidean norm in $\mathbbm R^d$ by $\abs{\cdot}$ and the unit sphere in $\mathbbm R^d$ by $\mathbbm S^{d-1}$. Given $x \in \mathbbm R^d$ and $R \geq 0$, we write $B(x, R)$ for the closed ball centered at $x$ and of radius $R$. When $x = 0$, this ball is denoted simply by $B_R$. We use $\mathcal P(\mathbbm R^d)$ to denote the set of all Borel probability measures on $\mathbbm R^d$, which is assumed to be endowed with the topology of weak convergence of measures.

Given two sets $A, B$, a set-valued map $F: A \rightrightarrows B$ is a map that, to each $a \in A$, associates a (possibly empty) set $F(a) \subset B$.

Recall that, for two metric spaces $X$ and $Y$ endowed with their Borel $\sigma$-algebras and a Borel map $f:X \to Y$, the pushforward of a measure $\mu$ on $X$ through $f$ is the measure $f_{\#} \mu$ on $Y$ defined by
$$
f_{\#} \mu (B) = \mu(f^{-1}(B))
$$
for every Borel subset $B$ of $Y$. We extend the pushforward notation componentwise to vectors of measures: if $\pmb\mu = (\mu_1, \dotsc, \mu_N)$ with $\mu_i$ a measure on $X$ for every $i \in \{1, \dotsc, N\}$, then we set $f_{\#}\pmb\mu = (f_{\#}\mu_1, \dotsc, f_{\#}\mu_N)$.

We define, for $p \in [1, +\infty)$, the set
$$
\mathcal{P}_p(\mathbbm R^d)=\left\{ \mu \in \mathcal{P}(\mathbbm R^d) \suchthat \int_{\mathbbm R^d} \abs*{x}^p \diff \mu(x) < +\infty \right\}.
$$
We endow $\mathcal{P}_p(\mathbbm R^d)$ with the usual Wasserstein distance $\mathbf{W}_p$, defined by
\begin{equation}
\label{eq:defi-Wasserstein}
\mathbf{W}_p(\mu,\nu)=\inf \left\{ \int_{\mathbbm R^d \times \mathbbm R^d} \abs{x - y}^p \diff\lambda(x,y) \suchthat \lambda \in \Pi(\mu,\nu) \right\} ^{1\slash p} ,
\end{equation}
where $\Pi(\mu,\nu)= \left\{ \lambda \in \mathcal{P}(\mathbbm R^d \times \mathbbm R^d) \suchthat {\pi_{1}}_{\#} \lambda=\mu,\, {\pi_{2}}_{\#} \lambda=\nu \right\}$ and $\pi_1,\, \pi_2 : \mathbbm R^d \times \mathbbm R^d \to \mathbbm R^d$ denote the canonical projections onto the first and second factors of the product $\mathbbm R^d \times \mathbbm R^d$, respectively.

Given two metric spaces $X$ and $Y$ and $M > 0$, $\mathbf C(X; Y)$, $\Lip(X; Y)$, and $\Lip_M(X; Y)$ denote, respectively, the set of all continuous functions from $X$ to $Y$, the set of all Lipschitz continuous functions from $X$ to $Y$, and the subset of $\Lip(X; Y)$ containing only those functions whose Lipschitz constant is at most $M$. %When $X = \mathbbm R_+$, we omit it from the previous notations and write simply $\mathbf C(Y)$, $\Lip(Y)$, and $\Lip_M(Y)$, respectively.

For $t\in \mathbbm R_+$, we denote by $e_t:\mathbf{C}(\mathbbm R_+; \mathbbm R^d) \to \mathbbm R^d$ the evaluation map at time $t$, defined by $e_t(\gamma)=\gamma(t)$ for every $\gamma \in \mathbf C(\mathbbm R_+; \mathbbm R^d)$. We remark that $\mathbf C(\mathbbm R_+; \mathbbm R^d)$, endowed with the topology of uniform convergence on compact sets, is a Polish space, which is complete when endowed, for instance, with the metric $\mathbf d$ given by
\begin{equation}
\label{eq:defi-d-cont}
\mathbf d(\gamma_1,\gamma_2) = \sum_{n>0} \frac{1}{2^n}\frac{\sup_{t \in [0,n]}\abs*{\gamma_1(t)-\gamma_2(t)}}{1+\sup_{t \in [0,n]}\abs*{\gamma_1(t)-\gamma_2(t)}}
\end{equation}
for $\gamma_1, \gamma_2 \in \mathbf C(\mathbbm R_+; \mathbbm R^d)$. Whenever needed, we assume in the sequel that $\mathbf C(\mathbbm R_+; \mathbbm R^d)$ is endowed with this metric.

\section{The MFG model} \label{sec MFG model}

For $i \in \{1, \dotsc, N\}$, let $\Gamma_i \subset \mathbbm R^d$ be a closed nonempty set, $K_i:\mathcal{P}(\mathbbm R^d) \times \mathcal{P}(\mathbbm R^d)^{N-1} \times \mathbbm R^d \to \mathbbm R_+$, $m_0^i \in \mathcal P(\mathbbm R^d)$, and denote for simplicity $\mathbf\Gamma = (\Gamma_1, \dotsc, \Gamma_N)$, $\mathbf K = (K_1, \dotsc, K_N)$, and $\mathbf{m_0} = (m_0^1, \dotsc, m_0^N)$. We consider in this paper the following mean field game, denoted by $\MFG(\mathbf\Gamma, \mathbf K, \mathbf{m_0})$: $N$ populations evolve in the space $\mathbbm R^d$ and, for $i \in \{1, \dotsc, N\}$, the distribution of the $i$-th population at time $t \geq 0$ is described by a probability measure $m_{t}^{i} \in \mathcal{P}(\mathbbm R^d)$. The aim of each agent of population $i$ is to minimize their time to reach their \emph{target set} $\Gamma_i$ and, in order to model congestion, we assume that the speed of an agent of population $i$ at a position $x$ in time $t$ is bounded by $K_i(m_{t}^{i}, \hat{m}_{t}^{i}, x)$, where $\hat{m}_{t}^{i} \in \mathcal P(\mathbbm R^d)^{N-1}$ describes the distribution of agents in the other populations and is defined by
\begin{equation}
\label{eq:hat_m_i}
\hat m_t^i = (m_t^1, \dotsc, m_t^{i-1}, m_t^{i+1}, \dotsc , m_t^{N}).
\end{equation}
More precisely, we assume that the movement of a representative agent of population $i$ is described by the control system
\begin{equation}
\label{eq:MFG-dynamics}
\dot\gamma(t) = K_i(m_t^i, \hat m_t^i, \gamma(t)) u(t), \qquad u(t) \in B_1,
\end{equation}
where $\gamma(t) \in \mathbbm R^d$ is the state of the agent and $u(t)$ is their control at time $t$, the control being constrained to remain in the closed unit ball $B_1$.

In order to properly model congestion through the functions $K_1, \dotsc , K_N$, a reasonable assumption is that $K_i(\mu_i, \hat \mu_i, x)$ is small when the measures $\mu_1, \dotsc , \mu_N$ are large around $x$, and that larger values of $\mu_j$, $j\neq i$, are more penalized than larger values of $\mu_i$, to model the fact that an agent moving with their own population is less penalized than if this same agent moves in the middle of another population going potentially in another direction. A possible form for each $K_i$ is 
\[
K_i(\mu_i, \hat \mu_i, x)= g\left(\int_{\mathbbm R^d}\chi(x-y)\diff \mu_i(y)+ \sum_{\substack{j=1\\ j\neq i}}^N \lambda_j \int_{\mathbbm R^d}\chi(x-y)\diff \mu_j(y)\right),
\]
where $g:\mathbbm R_+ \to \mathbbm R_+^\ast$ is decreasing, $\chi:\mathbbm R^d \to \mathbbm R_+$ is a smooth convolution kernel, and $\lambda_j > 1$ is a constant for $j\in \{1, \dotsc , N\}\setminus \{i\}$. 
Let us point out that we do not assume this specific form of $K_i$ in the sequel but, under suitable regularity assumptions on $g$ and $\chi$, such a $K_i$ satisfies assumptions \ref{HypoMFG-K-Bound} and \ref{HypoMFG-K-Lip} stated below as well as assumption \ref{HypoMFG-K-loc-Lip-Wass} from Section~\ref{SecMFGSyst} (see, e.g., \cite[Proposition~3.1]{Mazanti2019Minimal} for a similar result).

The trajectory $\gamma$ of an agent in population $i$ depends on the distribution of agents of population $i$ and also on that of agents of other populations, since the speed of $\gamma$ should not exceed $K_i(m_{t}^{i}, \hat{m}_{t}^{i}, \gamma(t))$. On the other hand, the distributions $m_t^i$ depend on how agents choose their trajectories. We are interested here in \emph{equilibrium} situations, i.e., situations in which, starting from time evolutions of the distributions of agents $m^i:\mathbbm R_+ \to \mathcal{P}(\mathbbm R^d)$, the trajectories chosen by agents induce evolutions of the initial distribution of agents $m_0^i$ that are precisely given by $m_t^i$.

To provide a more precise description of $\MFG(\mathbf\Gamma, \mathbf K, \mathbf{m_0})$, we now introduce an auxiliary optimal control problem. Given $\Gamma \subset \mathbbm R^d$ nonempty and closed and $k: \mathbbm R_+ \times \mathbbm R^d \to \mathbbm R_+$, we consider the optimal control problem $\OCP(\Gamma, k)$ in which an agent evolving in $\mathbbm R^d$ wants to reach $\Gamma$ in minimal time, their speed at position $x$ and time $t$ being bounded by $k(t, x)$. For this optimal control problem, $k$ does not depend on the density of the agents and is considered as a given function. The relation between the optimal control problem $\OCP(\Gamma, k)$ and the mean field game $\MFG(\mathbf\Gamma, \mathbf K, \mathbf{m_0})$ is that, for every population $i \in \{1, \dotsc, N\}$, an agent of population $i$ solves $\OCP(\Gamma_i, k_i)$, where $k_i$ is defined by $k_i(t, x) = K_i(m_t^i, \hat m_t^i, x)$ for $t \geq 0$ and $x \in \mathbbm R^d$.

\begin{definition}
Let $\Gamma \subset \mathbbm R^d$ be nonempty and closed and $k: \mathbbm R_+ \times \mathbbm R^d \to \mathbbm R_+$.
\begin{enumerate}[label={(\alph*)}]
    \item A curve $\gamma \in \Lip(\mathbbm R_+; \mathbbm R^d)$ is said to be \emph{admissible} for $\OCP(\Gamma, k)$ if it satisfies $\abs{\Dot{\gamma}(t)} \le k(t, \gamma(t))$ for almost every $t \in \mathbbm R_+$. The set of all admissible curves is denoted by $\Adm(k)$.  

    \item Let $t_0 \in \mathbbm R_+$. The \emph{first exit time} after $t_0$ of a curve $\gamma \in \Lip(\mathbbm R_+; \mathbbm R^d)$ is the number $\tau_\Gamma(t_0, \gamma) \in [0, +\infty]$ defined by
    $$
    \tau_\Gamma(t_0, \gamma) = \inf \{t \ge 0 \suchthat \gamma(t + t_0) \in \Gamma \}.
    $$
				
    \item Let $t_0 \in \mathbbm R_+$ and $x_0 \in \mathbbm R^d$. A curve $\gamma \in \Lip(\mathbbm R_+; \mathbbm R^d)$ is said to be an \emph{optimal trajectory} for $(\Gamma, k, t_0, x_0)$ if $\gamma \in \Adm(k)$, $\gamma(t) = x_0$ for every $t \in [0, t_0]$, $\tau_\Gamma(t_0, \gamma) < +\infty$, $\gamma(t) = \gamma(t_0 + \tau_\Gamma(t_0, \gamma)) \in \Gamma$ for every $t \in [t_0 + \tau_\Gamma(t_0, \gamma), +\infty)$, and
    \begin{equation}\label{min exit time}
        \tau_\Gamma(t_0, \gamma) = \inf_{\substack{\beta \in Adm(k) \\ \beta(t_0) = x_0}} \tau_\Gamma(t_0, \beta).
    \end{equation}
		The set of all optimal trajectories for $(\Gamma, k, t_0, x_0)$ is denoted by $\Opt(\Gamma, k, t_0, x_0)$. 
\end{enumerate}
\end{definition}

Note that admissible curves $\gamma$ for $\OCP(\Gamma, k)$ are trajectories of the control system
\begin{equation}\label{optimal-control}
\dot\gamma(t) = k(t, \gamma(t)) u(t),
\end{equation}
where the measurable function $u:\mathbbm R_+ \to B_1$ is the control associated with $\gamma$. The control system \eqref{optimal-control} is \emph{nonautonomous}, since $k$ explicitly depends on $t$.

We now provide the definition of Lagrangian equilibrium (which we refer to simply as \emph{equilibrium} in this paper for simplicity) of $\MFG(\mathbf\Gamma, \mathbf K, \mathbf{m_0})$.

\begin{definition}\label{Equilibrium}
Let $\mathbf{m_0} = (m_0^1, \dotsc, m_0^N) \in \mathcal P(\mathbbm R^d)^N$, $\mathbf \Gamma = (\Gamma_1, \dotsc, \Gamma_N)$, and $\mathbf K = (K_1, \dotsc, K_N)$ with $\Gamma_i \subset \mathbbm R^d$ nonempty and closed and $K_i: \mathcal P(\mathbbm R^d) \times \mathcal P(\mathbbm R^d)^{N-1} \times \mathbbm R^d \to \mathbbm R_+$ for every $i \in \{1, \dotsc, N\}$. A vector of measures $\mathbf Q = (Q_1, \dotsc, Q_N) \in \mathcal{P}(\mathbf{C}(\mathbbm R_+; \mathbbm R^d))^N$ is called a \emph{(Lagrangian) equilibrium} for $\MFG(\mathbf\Gamma, \mathbf K, \mathbf{m_0})$ if ${e_0}_{\#} \mathbf Q = \mathbf m_0$ and, for every $i \in \{1, \dotsc, N\}$, $Q_i$-almost every $\gamma$ is optimal for $(\Gamma_i, k_i, 0, \gamma(0))$, where $k_i:\mathbbm R_+ \times \mathbbm R^d \to \mathbbm R_+$ is defined for $(t, x) \in \mathbbm R_+ \times \mathbbm R^d$ by $k_i(t, x) = K_i(m_t^{i}, \hat{m}_{t}^{i}, x)$, $m_t^i = {e_t}_{\#} Q_i$, and $\hat{m}_{t}^{i}$ is given by \eqref{eq:hat_m_i}.
\end{definition}

Let us now state the base assumptions on the data of $\MFG(\mathbf\Gamma, \mathbf K, \mathbf{m_0})$ and $\OCP(\Gamma, k)$ used throughout this paper. Concerning $\MFG(\mathbf\Gamma, \mathbf K, \mathbf{m_0})$, we shall always assume the following hypotheses to be satisfied.

\begin{hypotheses}
\item\label{HypoMFG-Gamma} For $i \in \{1, \dotsc, N\}$, $\Gamma_i$ is a nonempty closed subset of $\mathbbm R^d$.
\item\label{HypoMFG-K-Bound} There exist positive constants $K_{\min},\, K_{\max}$ such that, for every $i \in \{1, \dotsc, N\}$, $K_i: \mathcal P(\mathbbm R^d) \times \mathcal P(\mathbbm R^d)^{N-1} \times \mathbbm R^d \to \mathbbm R_+$ is continuous and $K_i(\mu, \nu, x) \in [K_{\min}, K_{\max}]$ for every $(\mu, \nu, x) \in \mathcal P(\mathbbm R^d) \times \mathcal P(\mathbbm R^d)^{N-1} \times \mathbbm R^d$.
\item\label{HypoMFG-K-Lip} The functions $K_i$ are Lipschitz continuous with respect to their third variable, uniformly with respect to the first two variables, i.e., there exists $L > 0$ such that, for every $i \in \{1, \dotsc, N\}$, $\mu \in \mathcal P(\mathbbm R^d)$, $\nu \in \mathcal P(\mathbbm R^d)^{N-1}$, and $x_1, x_2 \in \mathbbm R^d$, we have
\[
\abs{K_i(\mu, \nu, x_1) - K_i(\mu, \nu, x_2)} \leq L \abs{x_1 - x_2}.
\]
\end{hypotheses}

As for $\OCP(\Gamma, k)$, we always assume the following hypotheses to be satisfied.

\begin{hypotheses}[resume]
\item\label{HypoOCP-Gamma} The set $\Gamma$ is a nonempty closed subset of $\mathbbm R^d$.
\item\label{HypoOCP-k-Bound} There exist positive constants $K_{\min},\, K_{\max}$ such that $k: \mathbbm R_+ \times \mathbbm R^d \to \mathbbm R_+$ is continuous and $k(t, x) \in [K_{\min}, K_{\max}]$ for every $(t, x) \in \mathbbm R_+ \times \mathbbm R^d$.
\item\label{HypoOCP-k-Lip} The function $k$ is locally Lipschitz continuous with respect to its second variable, uniformly with respect to the first variable, i.e., for every $R > 0$, there exists $L > 0$ such that, for every $t \in \mathbbm R_+$ and $x_1, x_2 \in B_R$, we have
\[
\abs{k(t, x_1) - k(t, x_2)} \leq L \abs{x_1 - x_2}.
\]
\end{hypotheses}

In the sequel of the paper, we always use the following notation.  
\begin{notation}\label{Notation-Phi}
Given $m_0^1, \dotsc, m_0^N \in \mathcal P(\mathbbm R^d)$, we denote by $\phi: \mathbbm R_+ \to \mathbbm R_+$ the function defined for $R\ge 0$ by $\phi(R) = \min_{i \in \{1, \dotsc, N\}} m_0^i(B_R)$.
\end{notation}
Notice that $\phi$ is nondecreasing and satisfies $\lim_{R \to +\infty} \phi(R) = 1$ and $m_0^i(B_R) \geq \phi(R)$ for every $i \in \{1, \dotsc, N\}$ and $R \geq 0$.

\begin{remark}
Even though this paper considers multi-population mean field games, our techniques also apply to single-population mean field games, in which the function $K_i$ in \eqref{eq:MFG-dynamics} is replaced by a function $K$ depending on the distribution $m_t$ of the single population at time $t$ and on the position $\gamma(t)$ of an agent. We chose to consider the multi-population setting due to the fact that it is closer to applications, since, in most crowd motion situations in practice, different parts of crowd may wish to reach different target sets, such as people taking different exists in a metro station. Moreover, several works such as \cite{Mazanti2019Minimal, Dweik2020Sharp} already consider single-population minimal-time mean field games, although with more restrictive assumption than here, and there is not much additional difficulty when considering directly the multi-population case.
\end{remark}

\section{Preliminary results on the optimal control problem}\label{sec OCP}

In this section, we collect the main properties of the optimal control problem $\OCP(\Gamma, k)$ that will be of use in the sequel of the paper. Note that $\OCP(\Gamma, k)$ is a minimal-time optimal control problem, which is a classic subject in the optimal control literature (see, e.g., \cite{BardiDolcetta, CannarsaPiermarcoSinestrari, Clarke, PontryagainBolttyanskiiGam}), but the assumptions \ref{HypoOCP-Gamma}--\ref{HypoOCP-k-Lip} on $\OCP(\Gamma, k)$ allow for less smooth $\Gamma$ and $k$ than those typically considered in the literature. Minimal-time optimal control problems have also been studied in connection with mean field games, for instance in \cite{Mazanti2019Minimal, Dweik2020Sharp}, the main difference with respect to the present paper being that those references consider optimal control problems in a compact state space, whereas the state space in the present paper is $\mathbbm R^d$.

The first property of $\OCP(\Gamma, k)$ that we consider is the existence of optimal trajectories, stated in the proposition below. Its proof can be carried out by standard techniques based on minimizing sequences and using the relative compactness of bounded subsets of $\Lip_{K_{\max}}(\mathbbm R_+; \mathbbm R^d)$ in the topology of $\mathbf C(\mathbbm R_+; \mathbbm R^d)$ and is omitted here for simplicity (see, e.g., \cite[Theorem~8.1.4]{CannarsaPiermarcoSinestrari} for a similar proof in the case of a more general optimal exit time problem for an autonomous control system).

\begin{proposition}
\label{PropExistOpt}
Consider the optimal control problem $\OCP(\Gamma, k)$ and assume that \ref{HypoOCP-Gamma} and \ref{HypoOCP-k-Bound} are satisfied. Then, for every $t_0 \in \mathbbm R_+$ and $x_0 \in \mathbbm R^d$, there exists an optimal trajectory $\gamma$ for $(\Gamma, k, t_0, x_0)$.
\end{proposition}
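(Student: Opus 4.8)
The plan is to apply the direct method of the calculus of variations. Fix $t_0 \in \mathbbm R_+$ and $x_0 \in \mathbbm R^d$ and set
\[
T^\ast = \inf \left\{ \tau_\Gamma(t_0, \beta) \suchthat \beta \in \Adm(k),\ \beta(t_0) = x_0 \right\} .
\]
The first step is to check that $T^\ast < +\infty$: picking $y \in \Gamma$ (nonempty by \ref{HypoOCP-Gamma}) and letting $\beta$ equal $x_0$ on $[0, t_0]$, then traverse the segment from $x_0$ to $y$ at constant speed $K_{\min}$, and stay at $y$ afterwards, the lower bound $k \geq K_{\min}$ from \ref{HypoOCP-k-Bound} gives $\beta \in \Adm(k)$ and $\tau_\Gamma(t_0, \beta) \leq \abs{x_0 - y}/K_{\min} < +\infty$.

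Next, I would take a minimizing sequence $(\gamma_n)_n \subset \Adm(k)$ with $\gamma_n(t_0) = x_0$ and $\tau_n := \tau_\Gamma(t_0, \gamma_n) \to T^\ast$, so that $\tau_n < +\infty$ for all $n$ after discarding finitely many terms. Replacing each $\gamma_n$ by the curve that equals $x_0$ on $[0, t_0]$, coincides with $\gamma_n$ on $[t_0, t_0 + \tau_n]$, and is constant equal to $\gamma_n(t_0 + \tau_n) \in \Gamma$ on $[t_0 + \tau_n, +\infty)$ — which remains admissible, since the added pieces have zero velocity, and has the same first exit time after $t_0$ — I may further assume $\gamma_n(t) = x_0$ on $[0, t_0]$ and $\gamma_n$ constant on $[t_0 + \tau_n, +\infty)$. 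By \ref{HypoOCP-k-Bound}, $\abs{\dot\gamma_n} \leq k \leq K_{\max}$ a.e., so each $\gamma_n$ lies in $\Lip_{K_{\max}}(\mathbbm R^d)$; since all $\gamma_n$ pass through $x_0$ at time $t_0$, the sequence is bounded, and by relative compactness of bounded subsets of $\Lip_{K_{\max}}(\mathbbm R^d)$ in $\mathbf C(\mathbbm R^d)$, up to a subsequence $\gamma_n \to \gamma$ in $\mathbf C(\mathbbm R^d)$, with $\gamma \in \Lip_{K_{\max}}(\mathbbm R^d)$ and $\gamma(t) = x_0$ on $[0, t_0]$.

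The main obstacle is to prove that $\gamma \in \Adm(k)$, i.e., that the pointwise speed constraint passes to the limit. I would use the equivalent integral formulation: a curve $\eta \in \Lip(\mathbbm R^d)$ belongs to $\Adm(k)$ if and only if $\abs{\eta(s) - \eta(t)} \leq \int_t^s k(r, \eta(r)) \diff r$ for all $0 \leq t \leq s$ — the forward implication being immediate, and the converse following by dividing by $s - t$, letting $s \downarrow t$, and using the continuity of $r \mapsto k(r, \eta(r))$ together with the differentiability of $\eta$ at almost every point. Applying this to each $\gamma_n$, and then passing to the limit using the uniform convergence $\gamma_n \to \gamma$ on compact sets, the continuity of $k$, and dominated convergence (with dominating constant $K_{\max}$), one obtains the same inequality for $\gamma$, hence $\gamma \in \Adm(k)$. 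This is the only step that genuinely uses the structure of the constraint; the remaining arguments are soft compactness and closedness.

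Finally, I would establish optimality. From $\gamma_n(t_0 + \tau_n) \in \Gamma$, $\tau_n \to T^\ast$, the uniform $K_{\max}$-Lipschitz bound, and uniform convergence on compacts, it follows that $\gamma_n(t_0 + \tau_n) \to \gamma(t_0 + T^\ast)$, so $\gamma(t_0 + T^\ast) \in \Gamma$ by closedness of $\Gamma$; hence $\tau_\Gamma(t_0, \gamma) \leq T^\ast$. Conversely, $\gamma \in \Adm(k)$ and $\gamma(t_0) = x_0$, so $\tau_\Gamma(t_0, \gamma) \geq T^\ast$ by definition of $T^\ast$; therefore $\tau_\Gamma(t_0, \gamma) = T^\ast < +\infty$, and in particular $\gamma$ does not meet $\Gamma$ on $[t_0, t_0 + T^\ast)$. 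Redefining $\gamma$ to be constant equal to $\gamma(t_0 + T^\ast)$ on $[t_0 + T^\ast, +\infty)$ preserves admissibility and leaves the first exit time unchanged, so the resulting curve meets all the requirements of $\Opt(\Gamma, k, t_0, x_0)$. Note that local Lipschitz continuity of $k$ (\ref{HypoOCP-k-Lip}) plays no role in this argument.
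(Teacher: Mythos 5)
Your proof is correct and follows exactly the approach the paper indicates for its omitted proof (minimizing sequence, relative compactness of bounded subsets of $\Lip_{K_{\max}}(\mathbbm R^d)$ in $\mathbf C(\mathbbm R^d)$, and closedness of the admissibility constraint under uniform convergence on compacts). The integral reformulation of the constraint, which is the only step requiring care, is handled properly and the continuity of $k$ from \ref{HypoOCP-k-Bound} is invoked exactly where needed.
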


Another property of $\OCP(\Gamma, k)$ than can be obtained by a straightforward argument is the following, which states that restrictions of optimal trajectories are still optimal.

\begin{proposition}
\label{PropRestrictionIsOptimal}
Consider the optimal control problem $\OCP(\Gamma, k)$ and let $(t_0, x_0) \in \mathbbm R_+ \times \mathbbm R^d$ and $\gamma_0 \in \Opt(\Gamma, k, t_0, x_0)$. Then, for every $t_1 \in [t_0, +\infty)$, denoting $x_1 = \gamma_0(t_1)$, the function $\gamma_1: \mathbbm R_+ \to \mathbbm R^d$ defined by $\gamma_1(t) = x_1$ for $t \leq t_1$ and $\gamma_1(t) = \gamma_0(t)$ for $t \geq t_1$ satisfies $\gamma_1 \in \Opt(\Gamma, k, t_1, x_1)$.
\end{proposition}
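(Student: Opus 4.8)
The plan is to split the argument according to whether $t_1$ lies before or after the moment $\gamma_0$ reaches its target. Set $\tau_0 := \tau_\Gamma(t_0, \gamma_0)$, which is finite by assumption, and recall $x_1 = \gamma_0(t_1)$. First I would record the easy structural facts about $\gamma_1$: since $\gamma_1$ equals the constant $x_1$ on $[0, t_1]$ and coincides with $\gamma_0$ on $[t_1, +\infty)$, and since the two pieces agree at $t_1$, the curve $\gamma_1$ is continuous and Lipschitz (it is Lipschitz on $[0,t_1]$ and on $[t_1,+\infty)$ with a common value at $t_1$); moreover $\gamma_1 \in \Adm(k)$, because $\dot\gamma_1 = 0$ a.e. on $(0,t_1)$ while, for a.e.\ $t > t_1$, $\abs{\dot\gamma_1(t)} = \abs{\dot\gamma_0(t)} \le k(t, \gamma_0(t)) = k(t, \gamma_1(t))$ — here the key point is that the speed constraint of $\Adm(k)$ is pointwise in $(t, \gamma(t))$, so it is preserved under such concatenations. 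The condition $\gamma_1(t) = x_1$ on $[0, t_1]$ holds by construction.

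Next I would compute $\tau_\Gamma(t_1, \gamma_1)$, using that, by definition of $\tau_0$ as a first exit time, $\gamma_0(s) \notin \Gamma$ for every $s \in [t_0, t_0 + \tau_0)$, whereas $\gamma_0(s) = \gamma_0(t_0+\tau_0) \in \Gamma$ for every $s \ge t_0 + \tau_0$. In the case $t_1 \ge t_0 + \tau_0$, one has $x_1 = \gamma_0(t_1) \in \Gamma$, hence $\tau_\Gamma(t_1, \gamma_1) = 0$; the requirement $\gamma_1(t) = x_1 \in \Gamma$ for $t \ge t_1$ is immediate from $\gamma_1(t) = \gamma_0(t) = \gamma_0(t_0+\tau_0) = x_1$, and optimality is trivial since the exit time is $0$. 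In the case $t_0 \le t_1 < t_0 + \tau_0$, the identity $\gamma_1(t_1 + (t_0+\tau_0-t_1)) = \gamma_0(t_0+\tau_0) \in \Gamma$ gives $\tau_\Gamma(t_1,\gamma_1) \le t_0+\tau_0-t_1$; conversely, if $\gamma_1(t_1+s) \in \Gamma$ for some $s \in [0, t_0+\tau_0-t_1)$, then $t_1+s \in [t_0, t_0+\tau_0)$ and $\gamma_1(t_1+s) = \gamma_0(t_1+s)$, contradicting $\gamma_0(s') \notin \Gamma$ on $[t_0, t_0+\tau_0)$; hence $\tau_\Gamma(t_1, \gamma_1) = t_0 + \tau_0 - t_1 < +\infty$, and the requirement that $\gamma_1$ stay in $\Gamma$ after time $t_1 + \tau_\Gamma(t_1,\gamma_1) = t_0+\tau_0$ is inherited directly from the corresponding property of $\gamma_0$.

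It then remains to check the minimality identity \eqref{min exit time} for $\gamma_1$ in the non‑trivial case, which I would do by contradiction. Suppose $\beta \in \Adm(k)$ with $\beta(t_1) = x_1$ satisfies $\tau_\Gamma(t_1, \beta) < t_0 + \tau_0 - t_1$. Define $\tilde\gamma$ to equal $\gamma_0$ on $[0, t_1]$ and $\beta$ on $[t_1, +\infty)$; the two pieces agree at $t_1$ (both equal $x_1$), so exactly as in the first paragraph $\tilde\gamma \in \Adm(k)$, and $\tilde\gamma(t_0) = \gamma_0(t_0) = x_0$. Then $\tilde\gamma\big(t_1 + \tau_\Gamma(t_1,\beta)\big) = \beta\big(t_1 + \tau_\Gamma(t_1,\beta)\big) \in \Gamma$ with $t_1 + \tau_\Gamma(t_1,\beta) \ge t_0$, so $\tau_\Gamma(t_0, \tilde\gamma) \le t_1 + \tau_\Gamma(t_1,\beta) - t_0 < \tau_0 = \tau_\Gamma(t_0, \gamma_0)$, contradicting \eqref{min exit time} for $\gamma_0 \in \Opt(\Gamma, k, t_0, x_0)$. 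This shows $\tau_\Gamma(t_1, \gamma_1)$ is minimal among admissible curves passing through $x_1$ at time $t_1$, and combined with the structural properties above yields $\gamma_1 \in \Opt(\Gamma, k, t_1, x_1)$.

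I do not expect a genuine obstacle here — the statement is of "straightforward argument" type — but the points requiring the most care are the case distinction around $t_1$ versus $t_0 + \tau_0$ and the verification that the glued competitor $\tilde\gamma$ reaches $\Gamma$ \emph{strictly} before time $t_0 + \tau_0$; both rest on reading off, from the definition of $\tau_0$ as an infimum, that $\gamma_0$ avoids $\Gamma$ on $[t_0, t_0+\tau_0)$.
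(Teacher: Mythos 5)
The paper states Proposition~\ref{PropRestrictionIsOptimal} without proof, remarking only that it follows ``by a straightforward argument,'' and your proof is exactly the argument intended: admissibility is preserved under concatenation of Lipschitz arcs agreeing at $t_1$ because the constraint defining $\Adm(k)$ is pointwise in $(t,\gamma(t))$, the exit time of $\gamma_1$ is computed exactly (with the clean case split around $t_0 + \tau_0$), and minimality follows by gluing $\gamma_0$ on $[0,t_1]$ to any purportedly faster competitor $\beta$ from $(t_1,x_1)$, producing an admissible trajectory from $(t_0,x_0)$ that contradicts the optimality of $\gamma_0$. The one step you leave tacit, when you write $\beta\bigl(t_1 + \tau_\Gamma(t_1,\beta)\bigr) \in \Gamma$, is that the infimum defining the first exit time is attained; this does hold under \ref{HypoOCP-Gamma} since $\Gamma$ is closed and $\beta$ is continuous, but it deserves an explicit mention in a fully written-out proof.
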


\subsection{The value function}
\label{SecValueFunction}

We consider in this section properties of the value function corresponding to the optimal control problem $\OCP(\Gamma, k)$, whose definition is given next.
  
\begin{definition}
Let $\Gamma \subset \mathbbm R^d$ be a nonempty closed set and $k: \mathbbm R_+ \times \mathbbm R^d \to \mathbbm R_+$. The \emph{value function} of the optimal control problem $\OCP(\Gamma, k)$ is the function $\varphi: \mathbbm R_+ \times \mathbbm R^d \to \mathbbm R_+$ defined for $(t_0, x_0) \in \mathbbm R_+ \times \mathbbm R^d$ by
\begin{equation}\label{value def}
\varphi(t_0, x_0) = \inf_{\substack{\gamma \in \Adm(k) \\ \gamma(t_0) = x_0}} \tau_\Gamma(t_0,\gamma).
\end{equation}
\end{definition}

Our next preliminary result provides local bounds on the value function and on the norm of optimal trajectories.

\begin{proposition}\label{psi}
Consider the optimal control problem $\OCP(\Gamma, k)$ and its value function $\varphi$ and assume that \ref{HypoOCP-Gamma} and \ref{HypoOCP-k-Bound} are satisfied. Then there exist two nondecreasing maps with linear growth $\psi,\,T:\mathbbm{R_+} \to \mathbbm{R_+}$ depending only on $\Gamma$, $K_{\min}$, and $K_{\max}$ such that, for every $R>0$, $t_0 \in \mathbbm R_+$, $x_0 \in B_R$, we have $\varphi(t_0, x_0) \leq T(R)$ and, for every $\gamma \in \Opt(\Gamma, k, t_0, x_0)$, we have $\gamma(t) \in B_{\psi(R)}$ for every $t \ge 0$.
\end{proposition}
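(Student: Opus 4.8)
The plan is to produce $T$ and $\psi$ by elementary one-sided comparison estimates, using only the speed bounds in \ref{HypoOCP-k-Bound} together with the fact that $\Gamma$ is nonempty and closed; no regularity of $\Gamma$ or $k$ beyond these hypotheses is needed.

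First I would fix once and for all a point $\bar x \in \Gamma$ realizing the distance from the origin to $\Gamma$, which exists since $\Gamma$ is nonempty and closed, and set $D_0 = \abs{\bar x} = \operatorname{dist}(0, \Gamma)$, a quantity depending only on $\Gamma$. Given $R > 0$, $t_0 \in \mathbbm R_+$ and $x_0 \in B_R$, I would bound $\varphi(t_0, x_0)$ from above by an explicit admissible competitor $\beta$: the curve that stays at $x_0$ on $[0, t_0]$, travels along the segment from $x_0$ to $\bar x$ at constant speed $K_{\min}$, and stays at $\bar x$ afterwards. Since $k \geq K_{\min}$ by \ref{HypoOCP-k-Bound}, this curve satisfies $\abs{\dot\beta(t)} \leq k(t, \beta(t))$ for a.e.\ $t$, hence $\beta \in \Adm(k)$, and it reaches $\Gamma$ at time $t_0 + \abs{x_0 - \bar x}\slash K_{\min}$. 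Using $\abs{x_0 - \bar x} \leq \abs{x_0} + \abs{\bar x} \leq R + D_0$ and definition \eqref{value def}, this gives $\varphi(t_0, x_0) \leq (R + D_0)\slash K_{\min}$, so the choice $T(R) := (R + D_0)\slash K_{\min}$ works: it is nondecreasing, affine in $R$, and depends only on $\Gamma$ and $K_{\min}$.

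Next, for the bound on optimal trajectories, I would take $\gamma \in \Opt(\Gamma, k, t_0, x_0)$ and denote $\tau = \tau_\Gamma(t_0, \gamma)$. By the definition of an optimal trajectory together with \eqref{min exit time} and \eqref{value def}, one has $\tau = \varphi(t_0, x_0) \leq T(R)$; moreover $\gamma$ is constant equal to $x_0$ on $[0, t_0]$, constant equal to $\gamma(t_0 + \tau) \in \Gamma$ on $[t_0 + \tau, +\infty)$, and on $[t_0, t_0 + \tau]$ it satisfies $\abs{\dot\gamma(t)} \leq k(t, \gamma(t)) \leq K_{\max}$. Integrating this last bound yields $\abs{\gamma(t) - x_0} \leq K_{\max}(t - t_0) \leq K_{\max}\tau \leq K_{\max} T(R)$ for $t \in [t_0, t_0 + \tau]$, and hence $\abs{\gamma(t) - x_0} \leq K_{\max} T(R)$ for every $t \geq 0$ (trivially on $[0, t_0]$, and the curve being frozen after $t_0 + \tau$). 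Therefore $\abs{\gamma(t)} \leq R + K_{\max} T(R)$ for all $t \geq 0$, so $\psi(R) := R + K_{\max} T(R) = R + \tfrac{K_{\max}}{K_{\min}}(R + D_0)$ does the job: it is nondecreasing, affine in $R$, and depends only on $\Gamma$, $K_{\min}$, and $K_{\max}$.

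I do not expect a genuine obstacle: the argument is essentially a distance-over-speed estimate fed back through the value function. The only points requiring a little care are verifying that the competitor $\beta$ is admissible (which rests precisely on the lower bound $k \geq K_{\min}$, hence on \ref{HypoOCP-k-Bound}) and extracting correctly from the definition of $\Opt(\Gamma, k, t_0, x_0)$ that an optimal trajectory moves only on the interval $[t_0, t_0 + \varphi(t_0, x_0)]$, which is where the upper bound $\varphi(t_0,x_0) \le T(R)$ is used. One should also note that both $T$ and $\psi$ as constructed are affine, which in particular gives the claimed linear growth.
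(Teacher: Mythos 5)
Your proof is correct and follows essentially the same route as the paper's sketched argument: bound $\varphi$ by the time along an explicit admissible competitor running at the guaranteed speed $K_{\min}$ toward a fixed point of $\Gamma$, then feed that bound through the $K_{\max}$-Lipschitz property of optimal trajectories. The only cosmetic difference is that the paper's competitor detours through the origin (segment $x_0 \to 0 \to \bar x$) while yours goes straight from $x_0$ to $\bar x$, but by the triangle inequality both yield the same bound $T(R) = (R + \operatorname{dist}(0,\Gamma))/K_{\min}$.
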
 

The bound $T(R)$ on the value function can be obtained, for instance, by remarking that a particular admissible trajectory is the one that moves with speed $K_{\min}$ along the segment from $x_0$ to $0$ and then along the segment from $0$ to the closest point of $\Gamma$ from $0$. Since any optimal trajectory $\gamma$ is $K_{\max}$-Lipschitz and arrives at the target set in time at most $T(R)$, one can easily bound $\abs{\gamma(t)}$ by $\abs{x_0} + K_{\max} T(R)$, yielding the bound on optimal trajectories.

In the next result we recall the dynamic programming principle, which can be proved by standard techniques in optimal control (see, e.g., \cite[Proposition~2.1]{BardiDolcetta} and \cite[(8.4)]{CannarsaPiermarcoSinestrari} for the corresponding result in the autonomous case).

\begin{proposition}\label{dynamic prog-principle}
Consider the optimal control problem $\OCP(\Gamma, k)$ and its value function $\varphi$ and assume that \ref{HypoOCP-Gamma} and \ref{HypoOCP-k-Bound} are satisfied. Then, for every $(t_0, x_0) \in \mathbbm R_+ \times \mathbbm R^d$ and $\gamma \in \Adm(k)$ with $\gamma(t_0) = x_0$, we have
\begin{equation}
\label{eq:DPP}
\varphi(t_0 + h, \gamma(t_0 + h)) + h \geq \varphi(t_0, x_0), \qquad \text{ for every } h \geq 0,
\end{equation}
with equality for every $h \in [0, \tau_\Gamma(t_0, \gamma)]$ if $\gamma \in \Opt(\Gamma, k, t_0, x_0)$. Moreover, if $\gamma$ is constant on $[0, t_0]$ and on $[t_0 + \tau_\Gamma(t_0, \gamma), +\infty)$ and if equality holds in \eqref{eq:DPP} for every $h \in [0, \tau_\Gamma(t_0, \gamma)]$, then $\gamma \in \Opt(\Gamma, k, t_0, x_0)$.
\end{proposition}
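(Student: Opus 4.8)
The plan is to establish the three assertions separately, in all cases by the classical device of concatenating admissible curves, which works directly here despite the state space being $\mathbbm R^d$ and the dynamics being nonautonomous. For the inequality \eqref{eq:DPP}, I would fix $h \ge 0$, set $x_1 = \gamma(t_0+h)$, and take an arbitrary $\beta \in \Adm(k)$ with $\beta(t_0+h) = x_1$. The curve $\tilde\gamma$ coinciding with $\gamma$ on $[0, t_0+h]$ and with $\beta$ on $[t_0+h, +\infty)$ is continuous (the two pieces agree at $t_0+h$), is $K_{\max}$-Lipschitz, and satisfies $\abs{\dot{\tilde\gamma}(t)} \le k(t, \tilde\gamma(t))$ for a.e.\ $t$; hence $\tilde\gamma \in \Adm(k)$ with $\tilde\gamma(t_0) = x_0$. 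Since $\tilde\gamma$ reaches $\Gamma$ no later than time $t_0 + h + \tau_\Gamma(t_0+h, \beta)$, we get $\varphi(t_0,x_0) \le \tau_\Gamma(t_0,\tilde\gamma) \le h + \tau_\Gamma(t_0+h,\beta)$; taking the infimum over $\beta$ gives \eqref{eq:DPP} (and by Proposition~\ref{psi} the right-hand side is finite, so the bound is nontrivial).

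For the equality when $\gamma \in \Opt(\Gamma, k, t_0, x_0)$, write $\tau = \tau_\Gamma(t_0,\gamma)$, so that $\varphi(t_0,x_0) = \tau$ by optimality, and fix $h \in [0,\tau]$ with $x_1 = \gamma(t_0+h)$. By Proposition~\ref{PropRestrictionIsOptimal}, the curve $\gamma_1$ equal to $x_1$ on $[0, t_0+h]$ and to $\gamma$ afterwards belongs to $\Opt(\Gamma, k, t_0+h, x_1)$, so $\varphi(t_0+h, x_1) = \tau_\Gamma(t_0+h, \gamma_1)$; since $\gamma_1 = \gamma$ on $[t_0+h,+\infty)$ and $\gamma$ first reaches $\Gamma$ at time $t_0+\tau$ and stays there, this equals $\tau - h$. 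Hence $\varphi(t_0+h, \gamma(t_0+h)) + h = \tau = \varphi(t_0,x_0)$, which is the desired equality.

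For the converse, suppose $\gamma \in \Adm(k)$ is constant on $[0,t_0]$ and on $[t_0+\tau, +\infty)$, with $\tau = \tau_\Gamma(t_0,\gamma)$, and that equality holds in \eqref{eq:DPP} for all $h \in [0,\tau]$; put $x_0 = \gamma(t_0)$. Since $\varphi(t_0,x_0) < +\infty$ by Proposition~\ref{psi}, we must have $\tau < +\infty$, for otherwise the equality at $h = \varphi(t_0,x_0)+1$ would force $\varphi(t_0+h,\gamma(t_0+h)) = -1 < 0$. As $\Gamma$ is closed and $\gamma$ continuous, $\gamma(t_0+\tau) \in \Gamma$, whence $\varphi(t_0+\tau, \gamma(t_0+\tau)) = 0$, and the equality at $h = \tau$ gives $\tau = \varphi(t_0,x_0)$. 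It then suffices to read off the defining properties of $\Opt(\Gamma, k, t_0, x_0)$: $\gamma \in \Adm(k)$ by hypothesis; $\gamma \equiv x_0$ on $[0,t_0]$ and $\gamma \equiv \gamma(t_0+\tau) \in \Gamma$ on $[t_0+\tau,+\infty)$ by the constancy assumptions; $\tau_\Gamma(t_0,\gamma) = \tau < +\infty$; and $\tau_\Gamma(t_0,\gamma) = \varphi(t_0,x_0)$, which is exactly \eqref{min exit time}. I do not expect a genuine obstacle here; the only points deserving care are the use of Proposition~\ref{psi} to exclude the degenerate case $\tau = +\infty$, and the elementary facts — both consequences of the closedness of $\Gamma$ and of the bound $k \ge K_{\min}$ in \ref{HypoOCP-k-Bound} — that an admissible curve with finite exit time lies in $\Gamma$ at that time and that $\varphi(t, x) = 0$ exactly when $x \in \Gamma$.
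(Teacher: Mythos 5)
Your proof is correct and follows precisely the standard concatenation argument that the paper itself points to (the paper omits the proof, citing \cite[Proposition~2.1]{BardiDolcetta} and \cite[(8.4)]{CannarsaPiermarcoSinestrari}): glue an admissible $\beta$ onto $\gamma$ at time $t_0+h$ for the inequality, invoke Proposition~\ref{PropRestrictionIsOptimal} for the equality along optimal trajectories, and read off the definition of $\Opt$ for the converse, using Proposition~\ref{psi} to rule out $\tau = +\infty$. One tiny slip in the closing parenthetical: the fact that an admissible curve with finite exit time lies in $\Gamma$ at that time uses only closedness of $\Gamma$ and continuity, while $\varphi(t,x) = 0 \Leftrightarrow x \in \Gamma$ uses the upper bound $k \le K_{\max}$ (to bound the time to reach $\Gamma$ from below), not the lower bound $K_{\min}$ as you wrote — but this does not affect the argument.
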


Our next preliminary result on $\OCP(\Gamma, k)$ deals with the Lipschitz continuity of the value function. Lipschitz continuity of the value function is a classical result in optimal exit time problems (see, e.g., \cite[Theorem~8.2.5]{CannarsaPiermarcoSinestrari}), but most of the literature deals only with autonomous control systems, in which case the value function is a function of the space variable $x$ only. A classical state augmentation technique of \eqref{optimal-control} would be sufficient to obtain Lipschitz continuity of $\varphi$ on both time and space, but this would require the assumption that $k$ is locally Lipschitz continuous in the pair $(t, x)$, which is stronger than \ref{HypoOCP-k-Lip}. In order to highlight the fact that such an assumption is not necessary, we provide below a detailed proof of the Lipschitz continuity of $\varphi$, based on that of \cite[Theorem~8.2.5]{CannarsaPiermarcoSinestrari} but containing some simplifications due to the particular structure of the problem at hand. We start with a preliminary result stating Lipschitz continuity of $x \mapsto \varphi(t, x)$ for fixed $t \in \mathbbm R_+$.

\begin{lemma}\label{lemm-varphi-Lipschitz}
Consider the optimal control problem $\OCP(\Gamma, k)$ and its value function $\varphi$ and assume that \ref{HypoOCP-Gamma}--\ref{HypoOCP-k-Lip} are satisfied. Then, for every $R > 0$, there exists $C_R > 0$ such that, for every $t_0 \in \mathbbm R_+$ and $x_0,\, x_1 \in B_R$, we have
\[
\abs{\varphi(t_0, x_0) - \varphi(t_0, x_1)} \leq C_R \abs{x_0 - x_1}.
\] 
\end{lemma}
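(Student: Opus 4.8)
The plan is to prove the one-sided bound $\varphi(t_0, x_1) \le \varphi(t_0, x_0) + C_R \abs{x_0 - x_1}$ for all $t_0 \in \mathbbm R_+$ and $x_0, x_1 \in B_R$; exchanging the roles of $x_0$ and $x_1$ then gives the statement. Fix $R > 0$ and, using Proposition~\ref{psi}, pick $\psi(R), T(R)$ such that $\varphi \le T(R)$ on $\mathbbm R_+ \times B_R$ and every optimal trajectory issued from $B_R$ remains in $B_{\psi(R)}$; set $\rho = \psi(R) + 1$ and let $L$ be a Lipschitz constant of $k(t, \cdot)$ on $B_\rho$, uniform in $t$, provided by hypothesis~\ref{HypoOCP-k-Lip}. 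Writing $\delta = \abs{x_0 - x_1}$, the bound is immediate when $\delta \ge e^{-LT(R)}$, since then $\varphi(t_0, x_1) - \varphi(t_0, x_0) \le T(R) \le T(R) e^{LT(R)} \delta$. So from now on I assume $\delta < e^{-LT(R)}$.

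By Proposition~\ref{PropExistOpt} there is $\gamma_0 \in \Opt(\Gamma, k, t_0, x_0)$; set $\tau_0 = \tau_\Gamma(t_0, \gamma_0) = \varphi(t_0, x_0) \le T(R)$, so that $\gamma_0(t_0 + \tau_0) \in \Gamma$ and $\gamma_0$ stays in $B_{\psi(R)}$. The key idea is to let an agent starting at $x_1$ \emph{shadow} $\gamma_0$ by re-using its control: since $k \ge K_{\min} > 0$, the function $u(t) = \dot\gamma_0(t) / k(t, \gamma_0(t))$ is measurable with values in $B_1$, and I take $\gamma_1$ to be the solution (in the sense of Carathéodory) of $\dot\gamma_1(t) = k(t, \gamma_1(t)) u(t)$ with $\gamma_1 \equiv x_1$ on $[0, t_0]$, which belongs to $\Adm(k)$. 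As long as $\gamma_1(t_0 + s) \in B_\rho$, one has $\abs{\dot\gamma_1(t_0 + s) - \dot\gamma_0(t_0 + s)} \le L \abs{\gamma_1(t_0 + s) - \gamma_0(t_0 + s)}$, using the Lipschitz continuity of $k$ and $\abs{u} \le 1$, so by Grönwall's inequality $\abs{\gamma_1(t_0 + s) - \gamma_0(t_0 + s)} \le \delta e^{Ls}$; since $\delta e^{L\tau_0} < 1$ and $\gamma_0$ lies in $B_{\psi(R)}$, a standard bootstrap (continuity) argument shows this estimate persists for every $s \in [0, \tau_0]$ and that $\gamma_1$ does stay in $B_\rho$ on that interval.

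It then remains to close the small gap at arrival. Because $\gamma_0(t_0 + \tau_0) \in \Gamma$, the distance from $\gamma_1(t_0 + \tau_0)$ to $\Gamma$ is at most $\abs{\gamma_1(t_0 + \tau_0) - \gamma_0(t_0 + \tau_0)} \le \delta e^{LT(R)}$; I extend $\gamma_1$ after time $t_0 + \tau_0$ by moving at speed $K_{\min}$ along the segment toward a closest point of $\Gamma$ and then keeping it there, which is admissible since $k \ge K_{\min}$ everywhere, and which reaches $\Gamma$ within additional time $\delta e^{LT(R)}/K_{\min}$. Hence $\gamma_1 \in \Adm(k)$, $\gamma_1(t_0) = x_1$, and $\tau_\Gamma(t_0, \gamma_1) \le \tau_0 + \delta e^{LT(R)}/K_{\min}$, so $\varphi(t_0, x_1) \le \varphi(t_0, x_0) + \frac{e^{LT(R)}}{K_{\min}} \delta$. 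Combined with the easy case, this yields the claim with, for instance, $C_R = e^{LT(R)} ( T(R) + 1/K_{\min} )$.

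The only genuinely delicate point is ensuring that the shadowing trajectory $\gamma_1$ exists and never leaves the ball $B_\rho$ on which $L$ is a valid Lipschitz constant, so that Grönwall's inequality applies on all of $[0, \tau_0]$; this is precisely why one first reduces to the regime $\delta < e^{-LT(R)}$ and then runs the bootstrap argument. The remaining ingredients — Carathéodory existence for $\gamma_1$, measurability of $u$, admissibility of the concatenated curve, and the final distance estimate — are routine.
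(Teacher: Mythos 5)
Your proof is correct and follows the same route as the paper's: shadow the optimal trajectory $\gamma_0$ by reusing its control from the initial point $x_1$, estimate the endpoint gap via Grönwall, and close it at speed $K_{\min}$. The paper avoids your case split and bootstrap argument entirely by noting that every admissible curve, and in particular $\gamma_1$, is $K_{\max}$-Lipschitz, so $\abs{\gamma_1(t)} \leq R + K_{\max} T(R)$ holds directly on $[t_0, t_0^\ast]$ and the confinement needed for Grönwall comes for free.
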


\begin{proof}
Let $T: \mathbbm R_+ \to \mathbbm R_+$ be as in the statement of Proposition~\ref{psi}, $R > 0$, $t_0 \in \mathbbm R_+$, and $x_0, x_1 \in B_R$. Let $\gamma_0 \in \Opt(\Gamma, k, t_0, x_0)$ and denote by $u_0$ the corresponding optimal control, i.e., $\dot\gamma_0(t) = k(t, \gamma_0(t)) u_0(t)$ for a.e.\ $t \in \mathbbm R_+$. Let $t_0^\ast = t_0 + \varphi(t_0, x_0)$ be the time at which $\gamma_0$ arrives at the target set $\Gamma$ and $x_0^\ast = \gamma_0(t_0^\ast) \in \Gamma$ be the arrival position of $\gamma_0$ at $\Gamma$. We define $\gamma_1: \mathbbm R_+ \to \mathbbm R^d$ as follows: for $t \in [0, t_0]$, we set $\gamma_1(t) = x_1$; for $t \in [t_0, t_0^\ast]$, $\gamma_1$ is the unique solution of the differential equation $\dot\gamma_1(t) = k(t, \gamma_1(t)) u_0(t)$ with initial condition $\gamma_1(t_0) = x_1$; for $t \in (t_0^\ast, t_1^\ast]$, we set $\gamma_1(t) = \left(1 - \frac{t - t_0^\ast}{t_1^\ast - t_0^\ast}\right) x_1^\ast + \frac{t - t_0^\ast}{t_1^\ast - t_0^\ast} x_0^\ast$, where $x_1^\ast = \gamma_1(t_0^\ast)$ and $t_1^\ast = t_0^\ast + \frac{\abs{x_1^\ast - x_0^\ast}}{K_{\min}}$; and, for $t > t_1^\ast$, we set $\gamma_1(t) = \gamma_1(t_1^\ast) = x_0^\ast \in \Gamma$. In other words, $\gamma_1$ remains at $x_1$ until time $t_0$, then it is defined as the solution of the control system \eqref{optimal-control} with control $u_0$ until time $t_0^\ast$, and finally $\gamma_1$ moves from its position $x_1^\ast$ at time $t_0^\ast$ to the final position $x_0^\ast$ of $\gamma_0$ along the segment connecting these two points and with constant speed $K_{\min}$, remaining at $x_0^\ast$ afterward. By construction, we have $\gamma_1 \in \Adm(k)$ and $\tau_\Gamma(t_0, \gamma_1) \leq t_1^\ast - t_0 = \varphi(t_0, x_0) + \frac{\abs{x_1^\ast - x_0^\ast}}{K_{\min}}$, and hence
\begin{equation}
\label{eq:varphi-Lipschitz-intermediate-step}
\varphi(t_0, x_1) \leq \varphi(t_0, x_0) + \frac{\abs{x_1^\ast - x_0^\ast}}{K_{\min}}.
\end{equation}

Let us estimate $\abs{x_1^\ast - x_0^\ast}$. Notice first that, since $\gamma_0$ and $\gamma_1$ are $K_{\max}$-Lipschitz, we have, for every $t \in [t_0, t_0^\ast]$ and $i \in \{0, 1\}$,
\[\abs{\gamma_i(t)} \leq \abs{x_i} + K_{\max} (t_0^\ast - t_0) \leq R + K_{\max} T(R).\]
Let $L > 0$ be the Lipschitz constant of $k$ with respect to its second variable on $\mathbbm R_+ \times B_{R + K_{\max} T(R)}$. We then have, for every $t \in [t_0, t_0^\ast]$,
\[
\gamma_1(t) - \gamma_0(t) = x_1 - x_0 + \int_{t_0}^t \left[k(s, \gamma_1(s)) - k(s, \gamma_0(s))\right] u_0(s) \diff s,
\]
and thus
\[
\abs{\gamma_1(t) - \gamma_0(t)} \leq \abs{x_1 - x_0} + L \int_{t_0}^t \abs{\gamma_1(s) - \gamma_0(s)} \diff s.
\]
Hence, by Grönwall's inequality, we deduce that
\[
\abs{x_1^\ast - x_0^\ast} \leq e^{L T(R)} \abs{x_1 - x_0}.
\]
Combining with \eqref{eq:varphi-Lipschitz-intermediate-step}, we obtain that
\[
\varphi(t_0, x_1) \leq \varphi(t_0, x_0) + \frac{e^{L T(R)}}{K_{\min}} \abs{x_1 - x_0}.
\]
The conclusion follows with $C_R = \frac{e^{L T(R)}}{K_{\min}}$ by exchanging the role of $x_0$ and $x_1$ in the above argument.
\end{proof}

We can now deduce Lipschitz continuity of $\varphi$ by using Lemma~\ref{lemm-varphi-Lipschitz} and the dynamic programming principle from Proposition~\ref{dynamic prog-principle}.

\begin{proposition}\label{varphi is Lipschitz}
Consider the optimal control problem $\OCP(\Gamma, k)$ and its value function $\varphi$ and assume that \ref{HypoOCP-Gamma}--\ref{HypoOCP-k-Lip} are satisfied. Then, for every $R > 0$, there exists $M_R > 0$ such that, for every $(t_0, x_0), (t_1, x_1) \in \mathbbm R_+ \times B_R$, we have
\[
\abs{\varphi(t_0, x_0) - \varphi(t_1, x_1)} \leq M_R \left(\abs{t_0 - t_1} + \abs{x_0 - x_1}\right).
\] 
\end{proposition}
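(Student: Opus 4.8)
The plan is to obtain the joint Lipschitz estimate by combining the spatial bound of Lemma~\ref{lemm-varphi-Lipschitz} with the dynamic programming principle of Proposition~\ref{dynamic prog-principle}, so that everything reduces to a one–variable estimate in the time variable. Given $R>0$ and $(t_0,x_0),(t_1,x_1)\in\mathbbm R_+\times B_R$, I would first split
\[
\abs{\varphi(t_0,x_0)-\varphi(t_1,x_1)}\le\abs{\varphi(t_0,x_0)-\varphi(t_1,x_0)}+\abs{\varphi(t_1,x_0)-\varphi(t_1,x_1)},
\]
where the second term is at most $C_R\abs{x_0-x_1}$ by Lemma~\ref{lemm-varphi-Lipschitz}. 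It then remains to find, for each $R>0$, a constant $M'_R>0$ with $\abs{\varphi(t_0,x_0)-\varphi(t_1,x_0)}\le M'_R\abs{t_0-t_1}$ for all $t_0,t_1\in\mathbbm R_+$ and $x_0\in B_R$; the statement will follow with $M_R=\max\{M'_R,C_R\}$. I may assume $t_0\le t_1$ and write $h=t_1-t_0$.

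For one direction, the curve constantly equal to $x_0$ belongs to $\Adm(k)$, so Proposition~\ref{dynamic prog-principle} applied to it yields $\varphi(t_1,x_0)+h\ge\varphi(t_0,x_0)$, i.e.\ $\varphi(t_0,x_0)-\varphi(t_1,x_0)\le h$. For the opposite direction I would take $\gamma_0\in\Opt(\Gamma,k,t_0,x_0)$ (Proposition~\ref{PropExistOpt}) and set $\tau_0=\tau_\Gamma(t_0,\gamma_0)=\varphi(t_0,x_0)$, distinguishing two cases. If $h\le\tau_0$, the equality case of Proposition~\ref{dynamic prog-principle} gives $\varphi(t_1,\gamma_0(t_1))=\varphi(t_0,x_0)-h$; since $\gamma_0$ is $K_{\max}$-Lipschitz with $\gamma_0(t_0)=x_0$ one has $\abs{\gamma_0(t_1)-x_0}\le K_{\max}h$, and since $\gamma_0(t_1)\in B_{\psi(R)}$ by Proposition~\ref{psi} while $x_0\in B_R\subseteq B_{\psi(R)}$, Lemma~\ref{lemm-varphi-Lipschitz} on $B_{\psi(R)}$ gives
\[
\varphi(t_1,x_0)\le\varphi(t_1,\gamma_0(t_1))+C_{\psi(R)}K_{\max}h=\varphi(t_0,x_0)+\bigl(C_{\psi(R)}K_{\max}-1\bigr)h.
\]
If instead $h>\tau_0$, then $d(x_0,\Gamma)\le K_{\max}\tau_0$ because $\gamma_0$ joins $x_0$ to $\Gamma$ in time $\tau_0$ with speed at most $K_{\max}$, so the admissible curve that moves from $x_0$ at constant speed $K_{\min}$ along the segment to a nearest point of $\Gamma$ reaches $\Gamma$ in time $d(x_0,\Gamma)/K_{\min}\le(K_{\max}/K_{\min})\tau_0$, whence $\varphi(t_1,x_0)\le(K_{\max}/K_{\min})\tau_0<(K_{\max}/K_{\min})h$. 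In both cases $\varphi(t_1,x_0)-\varphi(t_0,x_0)\le M'_R h$ with $M'_R=\max\{1,\,C_{\psi(R)}K_{\max},\,K_{\max}/K_{\min}\}$, which combined with the first direction proves the desired time estimate.

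The hard part is the reverse time inequality: since \ref{HypoOCP-k-Lip} only furnishes Lipschitz dependence of $k$ on the space variable (not on time), one cannot directly estimate the effect of shifting an admissible trajectory in time, and the point of the argument is to transfer the time increment $h$ into the space increment $\abs{\gamma_0(t_1)-x_0}\le K_{\max}h$ along the optimal trajectory and then fall back on the already-proven spatial Lipschitz bound. The one genuinely delicate bookkeeping point is the case split according to whether $h$ exceeds the optimal time $\tau_0=\varphi(t_0,x_0)$, because the identity $\varphi(t_1,\gamma_0(t_1))=\varphi(t_0,x_0)-h$ supplied by the dynamic programming principle is only valid for $h\in[0,\tau_0]$; for larger $h$ one has to use instead that $x_0$ is then necessarily close to $\Gamma$.
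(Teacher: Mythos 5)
Your proof is correct and follows the same basic strategy as the paper's: reduce everything to the spatial Lipschitz estimate of Lemma~\ref{lemm-varphi-Lipschitz} via the dynamic programming principle (Proposition~\ref{dynamic prog-principle}) and the $K_{\max}$-Lipschitz bound on optimal trajectories, invoking Proposition~\ref{psi} to keep the relevant points in $B_{\psi(R)}$. The only substantive difference is the choice of intermediate point in the triangle inequality: you pass through $(t_1,x_0)$, which forces you to bound $\abs{\varphi(t_1,x_0)-\varphi(t_0,x_0)}$ directly in terms of $h=t_1-t_0$, and hence to distinguish the cases $h\le\tau_0$ and $h>\tau_0$ (your second case, with the $K_{\max}/K_{\min}$ constant, is sound but slightly less economical). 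The paper instead passes through $(t_1,\gamma_0(t_1))$ with $\gamma_0\in\Opt(\Gamma,k,t_0,x_0)$, which sidesteps the case split because $\varphi(t_0,x_0)-\varphi(t_1,\gamma_0(t_1))$ lies in $[0,t_1-t_0]$ in both regimes: when $h\le\tau_0$ this follows from the equality in the dynamic programming principle, and when $h>\tau_0$ one has $\gamma_0(t_1)\in\Gamma$ so $\varphi(t_1,\gamma_0(t_1))=0$ while $\varphi(t_0,x_0)<t_1-t_0$. Both arguments are complete; yours is a bit longer but makes the edge case $h>\tau_0$ fully explicit.
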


\begin{proof}
Let $\psi: \mathbbm R_+ \to \mathbbm R_+$ be as in the statement of Proposition~\ref{psi}, $R > 0$, and $(t_0, x_0),\, (t_1, x_1) \in \mathbbm R_+ \times B_R$ and assume, with no loss of generality, that $t_0 < t_1$. Let $\gamma_0 \in \Opt(\Gamma, k, t_0, x_0)$ and $x_0^\ast = \gamma_0(t_1)$. By Proposition~\ref{psi}, we have $\abs{x_0^\ast} \leq \psi(R)$ and, by Lemma~\ref{lemm-varphi-Lipschitz}, we have
\begin{equation}
\label{eq:varphi-Lipschitz-intermediate-step-2}
\abs{\varphi(t_1, x_0^\ast) - \varphi(t_1, x_1)} \leq C_{\psi(R)} \abs{x_0^\ast - x_1},
\end{equation}
where $C_{\psi(R)}$ denotes the Lipschitz constant of $x \mapsto \varphi(t, x)$ on $B_{\psi(R)}$ for all $t \geq 0$.

If $t_1 \leq t_0 + \varphi(t_0, x_0)$, then, by Proposition~\ref{dynamic prog-principle}, since $\gamma_0 \in \Opt(\Gamma, k, t_0, x_0)$, we have $\varphi(t_1, x_0^\ast) = \varphi(t_0, x_0) - (t_1 - t_0)$, and thus
\begin{equation}
\label{eq:varphi-Lipschitz-intermediate-step-3}
\abs{\varphi(t_0, x_0) - \varphi(t_1, x_1)} \leq \abs{t_1 - t_0} + C_{\psi(R)} \abs{x_0^\ast - x_1}.
\end{equation}
Otherwise, we have $t_1 > t_0 + \varphi(t_0, x_0)$, in which case $x_0^\ast = \gamma_0(t_1) = \gamma_0(t_0 + \varphi(t_0, x_0)) \in \Gamma$ and thus $\varphi(t_1, x_0^\ast) = 0$. Combining this with \eqref{eq:varphi-Lipschitz-intermediate-step-2} and the fact that $\varphi(t_0, x_0) < t_1 - t_0$, we deduce that \eqref{eq:varphi-Lipschitz-intermediate-step-3} also holds in this case.

Since $\gamma_0$ is $K_{\max}$-Lipschitz, we have $\abs{x_0 - x_0^\ast} \leq K_{\max} \abs{t_1 - t_0}$. Hence, combining with \eqref{eq:varphi-Lipschitz-intermediate-step-3}, we deduce that
\[
\abs{\varphi(t_0, x_0) - \varphi(t_1, x_1)} \leq (C_{\psi(R)} K_{\max} + 1) \abs{t_1 - t_0} + C_{\psi(R)} \abs{x_0 - x_1},
\]
yielding the conclusion.
\end{proof}

A classical consequence of the dynamic programming principle is that the value function $\varphi$ satisfies a Hamilton--Jacobi equation in the viscosity sense, which is the topic of the next proposition, whose proof is omitted here since it can be obtained by adapting classical arguments (see, e.g., \cite[Chapter~IV, Proposition~2.3]{BardiDolcetta} and \cite[Theorem~8.1.8]{CannarsaPiermarcoSinestrari}) to our non-autonomous setting.

\begin{proposition}\label{thm H-J}
Consider the optimal control problem $\OCP(\Gamma, k)$ and its value function $\varphi$ and assume that \ref{HypoOCP-Gamma}--\ref{HypoOCP-k-Lip} are satisfied. Consider the Hamilton--Jacobi equation
\begin{equation}\label{H-J equation}
-\partial_t \varphi(t, x) + k(t, x) \abs*{\nabla \varphi(t, x)} - 1 = 0.
\end{equation}
Then $\varphi$ is a viscosity solution of \eqref{H-J equation} on $\mathbbm R_+ \times (\mathbbm R^d \setminus \Gamma)$ and satisfies $\varphi(t, x) = 0$ for $(t, x) \in \mathbbm R_+ \times \Gamma$.

\end{proposition}

We next provide the following property of $\varphi$, whose proof can be found in \cite[Proposition~3.9 and Corollary~3.11]{Dweik2020Sharp}.

\begin{proposition}\label{PropLowerBoundPartialT}
Consider the optimal control problem $\OCP(\Gamma, k)$ and its value function $\varphi$ and assume that \ref{HypoOCP-Gamma}--\ref{HypoOCP-k-Lip} are satisfied. Then, for every $R > 0$, there exists $c > 0$ such that, for every $t_0, t_1 \in \mathbbm R_+$ with $t_0 \neq t_1$ and $x \in B_R$, we have
\[
\frac{\varphi(t_1, x) - \varphi(t_0, x)}{t_1 - t_0} \geq c - 1.
\] 
In particular, if $\varphi$ is differentiable at $(t_0, x)$, then $\partial_t \varphi(t_0, x) \geq c - 1$ and $\abs{\nabla\varphi(t_0, x)} \geq \frac{c}{K_{\max}}$.
\end{proposition}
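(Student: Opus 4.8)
The plan is to argue as in \cite[Proposition~3.9 and Corollary~3.11]{Dweik2020Sharp}. The ``in particular'' part is quick: if $\varphi$ is differentiable at $(t_0, x)$ with $x \in B_R \setminus \Gamma$, then letting $t_1 \to t_0$ from both sides in the difference-quotient inequality gives $\partial_t\varphi(t_0,x) \geq c - 1$, and since $\varphi$ solves \eqref{H-J equation} classically at $(t_0,x)$ by Proposition~\ref{thm H-J}, we get $K_{\max}\abs{\nabla\varphi(t_0,x)} \geq k(t_0,x)\abs{\nabla\varphi(t_0,x)} = 1 + \partial_t\varphi(t_0,x) \geq c$; replacing $c$ by $\min\{c, c/K_{\max}\} > 0$, which only weakens the first inequality, then yields both bounds. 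So everything reduces to the difference-quotient estimate, which by symmetry we prove for $t_0 < t_1$. Write $\delta = t_1 - t_0 > 0$ and $\tau_0 = \varphi(t_0,x)$; we may assume $\tau_0 > 0$, as otherwise $x \in \Gamma$ and $\varphi(\cdot,x) \equiv 0$. Let $\psi,\, T$ be as in Proposition~\ref{psi} and let $L$ be a Lipschitz constant of $k(t,\cdot)$ on $B_{\psi(R)}$, uniform in $t \geq 0$, as provided by \ref{HypoOCP-k-Lip}; I claim that $c = \tfrac{K_{\min}}{K_{\max}}e^{-L\,T(R)}$ works.

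First dispose of a ``large-delay'' regime using two elementary lower bounds for $\varphi(t_1,x)$. Extending any admissible curve issued from $(t_1,x)$ by the constant $x$ on $[t_0,t_1]$ shows $\varphi(t_1,x) \geq \tau_0 - \delta$. On the other hand, an optimal trajectory for $(\Gamma,k,t_1,x)$ (Proposition~\ref{PropExistOpt}) traces a path from $x$ to $\Gamma$ of length at most $K_{\max}\varphi(t_1,x)$; retracing it at full speed from time $t_0$ gives an admissible curve from $(t_0,x)$ reaching $\Gamma$ in time at most $\tfrac{K_{\max}}{K_{\min}}\varphi(t_1,x)$, so $\varphi(t_1,x) \geq \tfrac{K_{\min}}{K_{\max}}\tau_0$. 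The latter already gives $\varphi(t_1,x) - \tau_0 \geq (c-1)\delta$ whenever $\delta \geq \tfrac{1 - K_{\min}/K_{\max}}{1-c}\,\tau_0$; note $\tfrac{1-K_{\min}/K_{\max}}{1-c} \leq 1$ by the choice of $c$, so this covers in particular the case $\tau_0 \leq \delta$.

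It remains to handle the ``small-delay'' regime $\delta < \tfrac{1-K_{\min}/K_{\max}}{1-c}\tau_0$, in which $\delta < \tau_0$. Fix $\gamma_0 \in \Opt(\Gamma,k,t_0,x)$; by Proposition~\ref{psi} its image lies in $B_{\psi(R)}$, the point $y_0 := \gamma_0(t_1)$ is not in $\Gamma$, one has $\varphi(t_1,y_0) = \tau_0 - \delta$ by Proposition~\ref{dynamic prog-principle}, and the tail of $\gamma_0$ is optimal for $(\Gamma,k,t_1,y_0)$ by Proposition~\ref{PropRestrictionIsOptimal}. The desired inequality now reads $\varphi(t_1,x) \geq \varphi(t_1,y_0) + c\delta$, i.e.\ no curve issued from $(t_1,x)$ reaches $\Gamma$ before time $t_1 + \varphi(t_1,y_0) + c\delta$. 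This is the heart of the matter and is where I would follow the trajectory analysis of \cite{Dweik2020Sharp}: one exploits that optimal trajectories run at full speed $\abs{\dot\gamma_0} = k(\cdot,\gamma_0)$ (a consequence of minimality and Proposition~\ref{dynamic prog-principle}), so that $\gamma_0$ makes definite progress over $[t_0,t_1]$, and compares, via Grönwall's inequality through the $L$-Lipschitz dependence of $k(t,\cdot)$ on $B_{\psi(R)}$, the restriction of $\gamma_0$ to $[t_1,\cdot]$ with competitor curves emanating from $(t_1,x)$; the resulting separation estimate keeps every admissible curve from $(t_1,x)$ at distance from $\Gamma$ bounded below by a multiple of $\delta$ at the arrival time of $\gamma_0$, forcing an extra time $c\delta$. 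The main obstacle is exactly this last step: in the absence of semiconcavity---indeed of differentiability---of $\varphi$, one cannot simply differentiate the dynamic programming relation along an optimal trajectory, and must instead argue directly with competitor curves, carefully using only the uniform bounds $K_{\min},\, K_{\max}$ and the fact that $k$ is Lipschitz in space but a priori only continuous in time; this bookkeeping is the content of \cite[Proposition~3.9]{Dweik2020Sharp}.
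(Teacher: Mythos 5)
The paper itself offers no proof of this proposition: it simply defers to \cite[Proposition~3.9 and Corollary~3.11]{Dweik2020Sharp}, so there is no ``paper's own proof'' to match step by step. Your proposal takes the same route at heart (ultimately deferring the key estimate to \cite{Dweik2020Sharp}), but wraps it in scaffolding. Most of that scaffolding is correct and worth recording: the ``in particular'' reduction via Proposition~\ref{thm H-J} and the replacement $c \mapsto \min\{c, c/K_{\max}\}$ is fine (modulo the standing, and tacit, assumption $x \notin \Gamma$ there, which you correctly note); the two elementary bounds $\varphi(t_1,x) \geq \tau_0 - \delta$ and $\varphi(t_1,x) \geq \tfrac{K_{\min}}{K_{\max}}\tau_0$ are correct and do dispose of the large-delay regime $\delta \geq \tfrac{1 - K_{\min}/K_{\max}}{1 - c}\,\tau_0$; and the reduction of the small-delay regime to the inequality $\varphi(t_1, x) \geq \varphi(t_1, \gamma_0(t_1)) + c\delta$, using Propositions~\ref{PropRestrictionIsOptimal} and \ref{dynamic prog-principle}, is exactly the right reformulation.

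The genuine gap is the small-delay inequality itself, which you explicitly leave as a plan rather than a proof, and the plan as sketched does not close. You propose a Grönwall comparison ``through the $L$-Lipschitz dependence of $k(t,\cdot)$ on $B_{\psi(R)}$'' between the tail of $\gamma_0$ (resp.\ competitor curves from $(t_1, x)$) and shadowing curves from the other initial point. But that comparison only controls the divergence of two solutions of \eqref{optimal-control} with the same control and nearby initial data, and, since $\abs{x - \gamma_0(t_1)} \leq K_{\max}\delta$, it produces an estimate of the form $\abs{\varphi(t_1,x) - \varphi(t_1,\gamma_0(t_1))} \leq C\delta$ --- i.e.\ a \emph{two-sided} Lipschitz bound, which is essentially Lemma~\ref{lemm-varphi-Lipschitz} again. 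What the proposition requires is a one-sided bound of the opposite strength, namely $\varphi(t_1,x) - \varphi(t_1,\gamma_0(t_1)) \geq c\delta$ with $c>0$, and no amount of Grönwall on the state dynamics alone yields a lower bound with a positive constant. Note also that under \ref{HypoOCP-Gamma}--\ref{HypoOCP-k-Lip} the function $k$ is only \emph{continuous} in $t$ (Lipschitz in $t$ is assumption \ref{HypoOCP-k-loc-Lip}, which is \emph{not} hypothesized here), so any variant of your argument that time-shifts a trajectory and tries to compare $k(t,\cdot)$ with $k(t + \delta,\cdot)$ via a Lipschitz modulus in $t$ is unavailable. The actual mechanism in \cite[Proposition~3.9]{Dweik2020Sharp} must exploit optimality more sharply than a plain shadowing estimate, and since you have not reproduced it, you should either work out that argument in full or, as the paper does, state the result with the bare citation; the intermediate sketch risks suggesting an argument that does not in fact work under these hypotheses.
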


\subsection{Characterization of optimal controls}

Now that we have established elementary properties of the value function in Section~\ref{SecValueFunction}, we turn to the problem of characterizing the optimal control $u: \mathbbm R_+ \to B_1$ associated with an optimal trajectory $\gamma \in \Opt(\Gamma, k, t_0, x_0)$. Formally, by differentiating with respect to $h$ the equality of the dynamic programming principle in Proposition~\ref{dynamic prog-principle} for optimal trajectories and using the Hamilton--Jacobi equation \eqref{H-J equation}, one obtains that the optimal control $u$ should satisfy $u(t) = -\frac{\nabla\varphi(t, \gamma(t))}{\abs{\nabla\varphi(t, \gamma(t))}}$, an argument that can be made precise when $\varphi$ is differentiable at $(t, \gamma(t))$ (see, e.g., \cite[Corollary~4.1]{Mazanti2019Minimal}).

If $\varphi$ was semiconcave, one could deduce by standard arguments (see, e.g., \cite[Section~7.3]{CannarsaPiermarcoSinestrari} and \cite[Section~3.4]{Dweik2020Sharp}) that it is differentiable along optimal trajectories and hence obtain the above characterization of optimal controls. In particular (see, e.g., \cite[Theorem 7.3.16]{CannarsaPiermarcoSinestrari}), $\varphi$ can be shown to be semiconcave under the additional assumption that $k \in \mathbf C^{1, 1}(\mathbbm R_+ \times \mathbbm R^d; \mathbbm R)$ (i.e., $k$ is $\mathbf C^1$ and its differential is locally Lipschitz continuous). On the other hand, under our standing assumptions \ref{HypoOCP-Gamma}--\ref{HypoOCP-k-Lip}, neither semiconcavity nor differentiability of $\varphi$ along optimal trajectories are guaranteed, and, up to the authors' knowledge, it is an open question if these properties hold or not.

The goal of this section is to provide an alternative characterization of $u$ when $k$ is not necessarily more regular than locally Lipschitz continuous. This is done mainly for two reasons. Firstly, regularity assumptions on $k$ for $\OCP(\Gamma, k)$ correspond to regularity assumptions on $K_i$, $i \in \{1, \dotsc, N\}$, for $\MFG(\mathbf\Gamma, \mathbf K, \mathbf{m_0})$, and hence avoiding additional regularity assumptions on $k$ allow to obtain more general results for mean field games. Secondly, even when $k$ is smooth, the value function $\varphi$ may fail to be semiconcave in some situations, such as in the presence of state constraints (see, e.g., \cite[Example~4.4]{Cannarsa2008Lipschitz}), and semiconcavity of $\varphi$ is a key step in proving its differentiability along optimal trajectories and hence in characterizing $u$ as above. This motivates the search for techniques for characterizing optimal controls without relying on the semiconcavity of $\varphi$.

We shall need in this section the following additional assumption on $k$.

\begin{hypotheses}[resume]
\item\label{HypoOCP-k-loc-Lip} The function $k: \mathbbm R_+ \times \mathbbm R^d \to \mathbbm R_+$ is Lipschitz continuous with respect to both variables and locally in the second variable, i.e., for every $R > 0$, there exists $L > 0$ such that, for every $(t_1, x_1),\, (t_2, x_2) \in \mathbbm R_+ \times B_R$, we have
\[
\abs{k(t_1, x_1) - k(t_2, x_2)} \leq L \left(\abs{t_1 - t_2} + \abs{x_1 - x_2}\right).
\]
\end{hypotheses}

The first result we present is the following, which provides additional regularity assumptions on the optimal control $u$. It can be obtained by applying Pontryagin Maximum Principle to $\OCP(\Gamma, k)$ and using the maximization condition to deduce a relation between the optimal control $u$ and the costate variable in Pontryagin Maximum Principle. We refer the reader to \cite[Proposition~4.6 and Corollary~4.2]{Mazanti2019Minimal} for the details of the proof.

\begin{proposition}\label{prop smooth 1}
Consider the optimal control problem $\OCP(\Gamma, k)$ and assume that \ref{HypoOCP-Gamma}, \ref{HypoOCP-k-Bound}, and \ref{HypoOCP-k-loc-Lip} hold. Let $(t_0, x_0) \in \mathbbm R_+ \times \mathbbm R^d$, $\gamma \in \Opt(\Gamma, k, t_0, x_0)$, and $u$ be the optimal control corresponding to $\gamma$. Then $u \in \Lip([t_0, t_0+\varphi(t_0, x_0)]; \mathbbm S^{d-1})$. Moreover, its Lipschitz constant is bounded by the Lipschitz constant of $k$ on the set $[t_0, t_0 + \varphi(t_0, x_0)] \times B_R$, where $R > 0$  is such that $\gamma(t) \in B_R$ for every $t \geq 0$.
\end{proposition}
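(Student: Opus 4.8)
The plan is to apply the Pontryagin Maximum Principle (PMP) to the minimal-time optimal control problem $\OCP(\Gamma, k)$, restricted to the non-degenerate interval $[t_0, t_0 + \varphi(t_0, x_0)]$ on which the optimal trajectory $\gamma$ is actually moving toward $\Gamma$. Since, by Proposition~\ref{psi}, there exists $R > 0$ with $\gamma(t) \in B_R$ for all $t \geq 0$, the whole problem takes place in a fixed compact region where, by \ref{HypoOCP-k-loc-Lip}, the dynamics $f(t, x, u) = k(t, x) u$ is Lipschitz in $(t, x)$ and smooth (linear) in $u$; this is exactly the regularity setting in which PMP for free-final-time problems applies, even with a merely closed target set $\Gamma$ (using a transversality condition at the endpoint, or simply treating the arrival time as free). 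First I would invoke PMP to obtain an absolutely continuous costate $p: [t_0, t_0 + \varphi(t_0, x_0)] \to \mathbbm R^d$, nontrivial together with the abnormal multiplier, satisfying the adjoint equation $\dot p(t) = -k(t, \gamma(t)) \, \partial_x k(t, \gamma(t))^{\transp} \, \scalprod{p(t)}{u(t)}$ (interpreted appropriately via a generalized gradient of $k$ where $k$ is not differentiable, since $k$ is only locally Lipschitz in $x$) and the maximization condition $\scalprod{p(t)}{k(t, \gamma(t)) u(t)} = \max_{v \in B_1} \scalprod{p(t)}{k(t, \gamma(t)) v}$.

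From the maximization condition, since $k(t, \gamma(t)) \geq K_{\min} > 0$ by \ref{HypoOCP-k-Bound}, the maximum over $v \in B_1$ of $\scalprod{p(t)}{v}$ is attained at $v = p(t)/\abs{p(t)}$ whenever $p(t) \neq 0$, forcing $u(t) = p(t)/\abs{p(t)}$. The key point is therefore to rule out $p(t) = 0$: I would use the non-triviality of the PMP multipliers together with the constancy of the Hamiltonian along the optimal trajectory for a free-final-time problem (the Hamiltonian equals the constant $0$ here when one normalizes the cost multiplier, or a positive constant otherwise), which gives $\scalprod{p(t)}{k(t, \gamma(t)) u(t)} \equiv \text{const}$; combined with the maximization condition this yields $\abs{p(t)} \, k(t, \gamma(t)) \equiv \text{const}$, and if this constant were zero the abnormal multiplier would also have to vanish (since the running cost is $1$), contradicting non-triviality. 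Hence $\abs{p(t)}$ is bounded below by a positive constant on the whole interval, $u(t) = p(t)/\abs{p(t)}$ takes values in $\mathbbm S^{d-1}$, and $u$ inherits Lipschitz continuity from $p$: indeed $p$ is absolutely continuous with $\abs{\dot p(t)} \leq K_{\max} L \abs{p(t)}$ (where $L$ is the Lipschitz constant of $k$ on $[t_0, t_0 + \varphi(t_0, x_0)] \times B_R$ and I use $\abs{u(t)} = 1$), so $p$ is Lipschitz, and the map $p \mapsto p/\abs{p}$ is Lipschitz away from a ball around the origin.

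To get the sharp statement that the Lipschitz constant of $u$ is bounded by the Lipschitz constant $L$ of $k$ on $[t_0, t_0 + \varphi(t_0, x_0)] \times B_R$, I would not estimate $\dot u$ through $p$ directly but rather differentiate the relation $u(t) = p(t)/\abs{p(t)}$ in a way that exploits that $u$ stays on the unit sphere: writing $u = p/\abs p$ gives $\dot u = \frac{1}{\abs p}\bigl(\dot p - \scalprod{\dot p}{u} u\bigr)$, the projection of $\dot p/\abs p$ onto the tangent space $u^{\perp}$; substituting the adjoint equation $\dot p = -k(t,\gamma(t))\,\partial_x k(t,\gamma(t))^{\transp} \scalprod{p}{u}$ and using $\scalprod{p}{u} = \abs p$, the factor $\abs p$ cancels and one is left with $\dot u(t) = -k(t, \gamma(t)) \bigl(\partial_x k(t, \gamma(t))^{\transp} - \scalprod{\partial_x k(t, \gamma(t))^{\transp}}{u(t)} u(t)\bigr)$... wait, that still carries a $k(t,\gamma(t))$ factor which is only bounded by $K_{\max}$, not by $1$. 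The precise bound by $L$ alone presumably comes from reparametrizing by arclength, or from noting that $\abs{\dot\gamma(t)} = k(t,\gamma(t))$ so that $\dot u$ measured per unit \emph{length traveled} rather than per unit time has the stated constant; I would follow the argument of \cite[Proposition~4.6 and Corollary~4.2]{Mazanti2019Minimal} for this final bookkeeping step. The main obstacle I anticipate is the non-smoothness of $k$ in $x$: I must run the PMP in its nonsmooth (Clarke) form so that the adjoint inclusion $\dot p(t) \in -\scalprod{p(t)}{u(t)} \, k(t, \gamma(t)) \, \partial_x k(t, \gamma(t))$ makes sense with a generalized gradient, and verify that the lower bound on $\abs{p}$ and the Lipschitz estimate on $u$ survive when $\dot p$ is only an $L^\infty$ selection from a locally bounded set-valued map — which it does, since all I use is the pointwise bound $\abs{\dot p(t)} \leq L K_{\max} \abs{p(t)}$ and these are unaffected by the multivaluedness.
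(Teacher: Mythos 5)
Your overall strategy — apply the (nonsmooth) Pontryagin Maximum Principle, use the maximization condition to get $u = p/\abs{p}$, rule out the vanishing of the costate, then estimate $\dot u$ — is exactly the route the paper follows via its citation of \cite[Proposition~4.6 and Corollary~4.2]{Mazanti2019Minimal}. However, two specific steps as you wrote them are wrong, and the second error is precisely the source of the difficulty you yourself flagged.

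First, your adjoint equation has a spurious factor of $k(t,\gamma(t))$. The Hamiltonian here is $H(t,x,p,v) = k(t,x)\scalprod{p}{v} - \lambda_0$, whose spatial (generalized) gradient is $\scalprod{p}{v}\,\partial_x k(t,x)$, so the adjoint inclusion is $\dot p(t) \in -\scalprod{p(t)}{u(t)}\,\partial_x k(t,\gamma(t))$; there is no extra $k(t,\gamma(t))$ multiplying the gradient. Once you drop that factor and use $\scalprod{p}{u} = \abs{p}$ (from the maximization condition), differentiating $u = p/\abs{p}$ gives
\[
\dot u(t) \in -\bigl(\id - u(t)u(t)^{\transp}\bigr)\,\partial_x k(t,\gamma(t)),
\]
the tangential projection of the generalized gradient of $k$ onto $u(t)^{\perp}$. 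Since projections are $1$-Lipschitz and every selection of $\partial_x k(t,\gamma(t))$ has norm at most $L$ on $[t_0, t_0 + \varphi(t_0,x_0)] \times B_R$, you immediately get $\abs{\dot u(t)} \leq L$ almost everywhere, which is exactly the claimed bound. No reparametrization by arclength is needed; the ``$K_{\max}$ problem'' you ran into was an artifact of the wrong adjoint equation.

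Second, the control system is nonautonomous (because $k$ depends on $t$), so the maximized Hamiltonian is \emph{not} constant along the optimal trajectory, and your argument that $\abs{p(t)}\,k(t,\gamma(t)) \equiv \text{const}$ fails. To rule out $p(t_1) = 0$ for some $t_1$ in the interior of the interval, use instead the adjoint bound $\abs{\dot p(t)} \leq L\abs{p(t)}$: by Gr\"onwall, $p(t_1) = 0$ would force $p \equiv 0$ on the whole interval. The free final time transversality condition of PMP (the maximized Hamiltonian vanishes at the terminal time $T = t_0 + \varphi(t_0,x_0)$, even in the nonautonomous case) then reads $k(T,\gamma(T))\abs{p(T)} - \lambda_0 = 0$, forcing $\lambda_0 = 0$ as well and contradicting nontriviality of the multipliers. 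With these two corrections the argument closes cleanly and matches the cited proof.
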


We now introduce the two main objects that we will use to characterize optimal controls.

\begin{definition}
\label{DefUW}
Consider the optimal control problem $\OCP(\Gamma, k)$ and its value function $\varphi$ and assume that \ref{HypoOCP-Gamma}, \ref{HypoOCP-k-Bound}, and \ref{HypoOCP-k-loc-Lip} hold. Let $(t_0, x_0) \in \mathbbm R_+ \times \mathbbm R^d$.
\begin{enumerate}
\item We define the set $\mathcal U(t_0, x_0)$ of \emph{optimal directions} at $(t_0, x_0)$ as the set of all $u_0 \in \mathbbm S^{d-1}$ for which there exists $\gamma \in \Opt(\Gamma, k, t_0, x_0)$ such that the corresponding optimal control $u$ satisfies $u(t_0) = u_0$.
\item We define the set $\mathcal W(t_0, x_0)$ of \emph{directions of maximal descent of $\varphi$} at $(t_0, x_0)$ as the set of all $u_0 \in \mathbbm S^{d-1}$ such that
\begin{equation}
\label{eq:limit-def-W}
\lim_{h\to 0^+}\frac{\varphi(t_0 + h, x_{0} + h k(t_0, x_{0}) u_0) - \varphi(t_0, x_{0})}{h} = -1.
\end{equation}
\end{enumerate}
\end{definition}

Thanks to Proposition~\ref{prop smooth 1}, optimal controls are continuous and take values in $\mathbbm S^{d-1}$, and in particular the pointwise value $u(t_0)$ is well-defined. Together with Proposition~\ref{PropExistOpt}, we immediately deduce that $\mathcal U(t_0, x_0) \neq \varnothing$ for every $(t_0, x_0) \in \mathbbm R_+ \times (\mathbbm R^d \setminus \Gamma)$. On the other hand, for $(t_0,x_0)\in \mathbbm R^d\times \Gamma$, one observes that $\mathcal U(t_0, x_0)= \varnothing$, since, when $x_0 \in \Gamma$, the only optimal control is the control constantly equal to $0$, but, by definition, the members of $\mathcal U(t_0, x_0)$ must belong to the unit sphere $\mathbbm S^{d-1}$.

Note also that, if $u_0 \in \mathbbm S^{d-1}$ and $\gamma \in \Adm(k)$ is the trajectory obtained by taking a constant control $u(t) = u_0$ in \eqref{optimal-control}, then, by Proposition~\ref{dynamic prog-principle}, $\varphi(t_0 + h, \gamma(t_0 + h)) - \varphi(t_0, x_0) \geq -h$, yielding, using also Proposition~\ref{varphi is Lipschitz}, that, as $h \to 0^+$,
\[
\frac{\varphi(t_0 + h, x_{0} + h k(t_0, x_{0}) u_0) - \varphi(t_0, x_{0})}{h} \geq -1 + o(1).
\]
Hence, an element $u_0 \in \mathcal W(t_0, x_0)$ can be interpreted as a direction in which the above ratio attains its infinitesimal lower bound $-1$ at the limit $h \to 0^+$, and corresponds thus to directions in which $\varphi$ decreases with maximal rate.

Before turning to the main result of this section, Theorem~\ref{thm Ut_0,x_0}, asserting the equality between $\mathcal U(t_0, x_0)$ and $\mathcal W(t_0, x_0)$, let us first present some elementary properties of these set-valued maps. The first one is that, along an optimal trajectory $\gamma$, $\mathcal{U}(t, \gamma(t))$ is singleton, except possibly at its initial and final points. Its proof is the same as that of \cite[Proposition~4.7]{Mazanti2019Minimal} and is thus omitted here.

\begin{proposition}
\label{PropUSingleElement}
Consider the optimal control problem $\OCP(\Gamma, k)$ and its value function $\varphi$ and assume that \ref{HypoOCP-Gamma}, \ref{HypoOCP-k-Bound}, and \ref{HypoOCP-k-loc-Lip} hold. Let $(t_0, x_0) \in \mathbbm R_+ \times \mathbbm R^d$ and $\gamma \in \Opt(\Gamma, k, t_0, x_0)$. Then, for every $t \in (t_0, t_0 + \varphi(t_0, x_0))$, $\mathcal{U}(t,\gamma(t))$ contains exactly one element.
\end{proposition}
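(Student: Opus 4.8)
The plan is to fix the given $\gamma \in \Opt(\Gamma, k, t_0, x_0)$, write $u$ for its optimal control (which by Proposition~\ref{prop smooth 1} is continuous and $\mathbbm S^{d-1}$-valued on $[t_0, t_0 + \varphi(t_0, x_0)]$), fix $t \in (t_0, t_0 + \varphi(t_0, x_0))$, and set $x = \gamma(t)$; note that $x \notin \Gamma$ since $t < t_0 + \tau_\Gamma(t_0, \gamma)$. By Proposition~\ref{PropRestrictionIsOptimal}, the curve that is frozen at $x$ up to time $t$ and equals $\gamma$ afterwards belongs to $\Opt(\Gamma, k, t, x)$, and its optimal control on the active interval $[t, t + \varphi(t, x)]$ is the restriction of $u$; hence $u(t) \in \mathcal U(t, x)$, and it suffices to prove $\mathcal U(t, x) \subseteq \{u(t)\}$. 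So let $u_1 \in \mathcal U(t, x)$ be arbitrary, witnessed by $\beta \in \Opt(\Gamma, k, t, x)$ whose optimal control $v$ satisfies $v(t) = u_1$; by Proposition~\ref{prop smooth 1}, $v \in \Lip([t, t + \varphi(t, x)]; \mathbbm S^{d-1})$.

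The next step is a gluing argument: define $\eta : \mathbbm R_+ \to \mathbbm R^d$ by $\eta = \gamma$ on $[0, t]$ and $\eta = \beta$ on $[t, +\infty)$. Since $\gamma(t) = \beta(t) = x$ and both pieces satisfy the speed bound almost everywhere, $\eta \in \Adm(k)$. The dynamic programming equality for the optimal trajectory $\gamma$ (Proposition~\ref{dynamic prog-principle}) gives $\varphi(t, x) = \varphi(t_0, x_0) - (t - t_0)$, so $\beta$ reaches $\Gamma$ precisely at time $t + \varphi(t, x) = t_0 + \varphi(t_0, x_0)$ and stays there; combined with the fact that $\gamma$ does not meet $\Gamma$ on $[t_0, t]$, this yields $\tau_\Gamma(t_0, \eta) = \varphi(t_0, x_0)$ and shows that $\eta$ is constant on $[0, t_0]$ and on $[t_0 + \varphi(t_0, x_0), +\infty)$. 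Splitting $h \in [0, \varphi(t_0, x_0)]$ according to whether $t_0 + h \le t$ or $t_0 + h \ge t$ and invoking the DPP equalities for $\gamma$ and for $\beta$ respectively, one checks that equality holds in \eqref{eq:DPP} along $\eta$; the last assertion of Proposition~\ref{dynamic prog-principle} then gives $\eta \in \Opt(\Gamma, k, t_0, x_0)$.

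Finally I would compare optimal controls. Let $w$ be the optimal control of $\eta$; by Proposition~\ref{prop smooth 1} it is continuous with values in $\mathbbm S^{d-1}$ on $[t_0, t_0 + \varphi(t_0, x_0)]$. On $[t_0, t]$ we have $\eta = \gamma$ and $k > 0$, so $\dot\eta = \dot\gamma$ forces $w = u$ almost everywhere there, hence $w(t) = u(t)$ by continuity. On $[t, t_0 + \varphi(t_0, x_0)] = [t, t + \varphi(t, x)]$ we have $\eta = \beta$, so likewise $w = v$ there and $w(t) = v(t) = u_1$. Therefore $u_1 = u(t)$, and since $u_1 \in \mathcal U(t, x)$ was arbitrary we conclude $\mathcal U(t, x) = \{u(t)\}$.

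The work is concentrated not in any single estimate but in the bookkeeping of the gluing step: one must match the exit times of $\gamma$ and $\beta$ exactly (which is the role of the identity $\varphi(t, x) = \varphi(t_0, x_0) - (t - t_0)$) and verify the hypotheses of the converse part of the dynamic programming principle so that $\eta$ is certified as an honest optimal trajectory. The other delicate point is that Proposition~\ref{prop smooth 1} controls optimal controls only on the active interval $[t_0', t_0' + \varphi(t_0', x_0')]$ of the trajectory at hand, so the continuity arguments used to pass from almost-everywhere identities of controls to the pointwise value at $t$ are legitimate precisely because $t$ lies in the open interval $(t_0, t_0 + \varphi(t_0, x_0))$; this is exactly where the exclusion of the endpoints enters.
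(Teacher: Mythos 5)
Your argument is correct and coincides in substance with the proof the paper omits by reference to Mazanti (2019), Proposition 4.11: glue the given optimal trajectory on $[t_0,t]$ to an arbitrary competitor from $\Opt(\Gamma,k,t,\gamma(t))$, certify optimality of the concatenation via the converse part of the dynamic programming principle together with the identity $\varphi(t,\gamma(t))=\varphi(t_0,x_0)-(t-t_0)$, and use the Lipschitz regularity of optimal controls from Proposition~\ref{prop smooth 1} to force the two controls to agree at the junction. One small slip in your closing commentary: the hypothesis $t>t_0$ is needed not because continuity fails at the left endpoint, but because the almost-everywhere identity $w=u$ on $[t_0,t]$ becomes vacuous if that interval degenerates to a point (and $t<t_0+\varphi(t_0,x_0)$ is needed so that $\gamma(t)\notin\Gamma$); the body of the proof, however, uses the hypothesis correctly.
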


Our next result shows, on the other hand, that, at the points $(t_0, x_0)$ where $\varphi$ is differentiable, $\mathcal W(t_0, x_0)$ contains a unique direction of maximal descent which, as one might expect, is equal to $-\frac{\nabla\varphi(t_0, x_0)}{\abs{\nabla\varphi(t_0, x_0)}}$, as $\abs{\nabla\varphi(t_0, x_0)} \neq 0$ is guaranteed by Proposition~\ref{PropLowerBoundPartialT}.

\begin{proposition}
\label{PropWNormalizedGrad}
Consider the optimal control problem $\OCP(\Gamma, k)$ and its value function $\varphi$ and assume that \ref{HypoOCP-Gamma}, \ref{HypoOCP-k-Bound}, and \ref{HypoOCP-k-loc-Lip} hold. Let $(t_0, x_0) \in \mathbbm R_+ \times (\mathbbm R^d \setminus \Gamma)$ be such that $\varphi$ is differentiable at $(t_0, x_0)$. Then
$$
\mathcal{W}(t_0, x_0)= \left\{-\frac{\nabla \varphi(t_0, x_0)}{\abs*{\nabla \varphi(t_0, x_0)}} \right\}.
$$ 
\end{proposition}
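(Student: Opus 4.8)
The plan is to convert the limit defining $\mathcal W(t_0,x_0)$ into a linear condition on the candidate direction $u_0$, using differentiability of $\varphi$, and then to use the Hamilton--Jacobi equation together with the nonvanishing of $\nabla\varphi$ to identify exactly which directions satisfy that condition. First I would fix $u_0\in\mathbbm S^{d-1}$ and expand $\varphi$ to first order along the curve $h\mapsto(t_0+h,\,x_0+h\,k(t_0,x_0)u_0)$: since $\lvert h\rvert+\lvert h\,k(t_0,x_0)u_0\rvert=O(h)$, differentiability of $\varphi$ at $(t_0,x_0)$ gives
$$
\lim_{h\to 0^+}\frac{\varphi(t_0+h,\,x_0+h\,k(t_0,x_0)u_0)-\varphi(t_0,x_0)}{h}=\partial_t\varphi(t_0,x_0)+k(t_0,x_0)\,\nabla\varphi(t_0,x_0)\cdot u_0 .
$$
In particular the limit in \eqref{eq:limit-def-W} exists for every $u_0$, and $u_0\in\mathcal W(t_0,x_0)$ if and only if $\partial_t\varphi(t_0,x_0)+k(t_0,x_0)\,\nabla\varphi(t_0,x_0)\cdot u_0=-1$.

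Next I would invoke Proposition~\ref{thm H-J}: $\varphi$ is a viscosity solution of \eqref{H-J equation} on $\mathbbm R_+\times(\mathbbm R^d\setminus\Gamma)$, and since $(t_0,x_0)$ belongs to this set and $\varphi$ is differentiable there, $\varphi$ satisfies \eqref{H-J equation} at $(t_0,x_0)$ in the classical sense, i.e.\ $\partial_t\varphi(t_0,x_0)=k(t_0,x_0)\abs{\nabla\varphi(t_0,x_0)}-1$. (This is the standard fact that a viscosity solution solves the PDE classically at any point of differentiability, obtained by testing the sub- and supersolution inequalities against the affine approximation of $\varphi$; it is the only step of the argument that is not a direct computation.) Substituting this into the condition from the previous paragraph and using $k(t_0,x_0)\ge K_{\min}>0$ from \ref{HypoOCP-k-Bound}, I obtain that $u_0\in\mathcal W(t_0,x_0)$ is equivalent to
$$
\nabla\varphi(t_0,x_0)\cdot u_0=-\abs{\nabla\varphi(t_0,x_0)}.
$$

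Finally, Proposition~\ref{PropLowerBoundPartialT} yields $\abs{\nabla\varphi(t_0,x_0)}\ge c>0$, so $\nabla\varphi(t_0,x_0)\ne 0$, and the equality case of the Cauchy--Schwarz inequality shows that a unit vector $u_0$ satisfies the last identity precisely when $u_0=-\nabla\varphi(t_0,x_0)/\abs{\nabla\varphi(t_0,x_0)}$; conversely this unit vector clearly satisfies it. Hence $\mathcal W(t_0,x_0)=\{-\nabla\varphi(t_0,x_0)/\abs{\nabla\varphi(t_0,x_0)}\}$, which is the claim. Overall the proof is short; as noted, the only delicate point is the passage from the viscosity formulation of \eqref{H-J equation} to the pointwise identity at the differentiability point, everything else being an elementary computation.
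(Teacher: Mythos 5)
Your proof is correct and follows essentially the same route as the paper: expand $\varphi$ to first order using differentiability, substitute the Hamilton--Jacobi equation at $(t_0,x_0)$, and use $\nabla\varphi(t_0,x_0)\neq 0$ (from Proposition~\ref{PropLowerBoundPartialT}) plus the equality case of Cauchy--Schwarz. Your only addition is making explicit the standard fact that a viscosity solution satisfies the PDE classically at points of differentiability, which the paper leaves implicit; that is a reasonable thing to flag and does not change the argument.
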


\begin{proof}
Since $\varphi$ is differentiable at $(t_0, x_0)$ and using Proposition~\ref{thm H-J}, we have, for every $u_0 \in \mathbbm S^{d-1}$,
\begin{multline*}
\lim_{h\to 0^+}\frac{\varphi(t_0 + h, x_{0} + h k(t_0, x_{0}) u_0) - \varphi(t_0, x_{0})}{h} \\ = \partial_t \varphi(t_0, x_0) + k(t_0, x_0) \nabla\varphi(t_0, x_0) \cdot u_0 = -1 + k(t_0, x_0) [\nabla\varphi(t_0, x_0) \cdot u_0 + \abs{\nabla\varphi(t_0, x_0)}].
\end{multline*}
Hence \eqref{eq:limit-def-W} holds if and only if $\nabla\varphi(t_0, x_0) \cdot u_0 = - \abs{\nabla\varphi(t_0, x_0)}$ and, since $\nabla\varphi(t_0, x_0) \neq 0$ by Proposition~\ref{PropLowerBoundPartialT}, it follows that \eqref{eq:limit-def-W} holds if and only if $u_0 = -\frac{\nabla \varphi(t_0, x_0)}{\abs*{\nabla \varphi(t_0, x_0)}}$, yielding the conclusion.
\end{proof}

The main result of this section is the following.

\begin{theorem}\label{thm Ut_0,x_0}
Consider the optimal control problem $\OCP(\Gamma, k)$ and its value function $\varphi$ and assume that \ref{HypoOCP-Gamma}, \ref{HypoOCP-k-Bound}, and \ref{HypoOCP-k-loc-Lip} hold. Then, for every $(t_0, x_0) \in \mathbbm R_+ \times \mathbbm R^d$, we have $\mathcal{U}(t_0, x_0) = \mathcal{W}(t_0, x_{0})$.
\end{theorem}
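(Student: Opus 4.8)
The plan is to prove the two inclusions separately. If $x_0 \in \Gamma$, then $\varphi(t_0, x_0) = 0$, the only optimal trajectory for $(\Gamma, k, t_0, x_0)$ is the constant one and thus carries no control in $\mathbbm S^{d-1}$, so $\mathcal U(t_0, x_0) = \varnothing$; moreover the difference quotient in \eqref{eq:limit-def-W} is nonnegative since $\varphi \geq 0$, so $\mathcal W(t_0, x_0) = \varnothing$ too, and the two sets coincide. Assume from now on $x_0 \notin \Gamma$, i.e., $\varphi(t_0, x_0) > 0$. The inclusion $\mathcal U(t_0, x_0) \subseteq \mathcal W(t_0, x_0)$ is the easy one: given $u_0 \in \mathcal U(t_0, x_0)$ realized by $\gamma \in \Opt(\Gamma, k, t_0, x_0)$ with optimal control $u$ satisfying $u(t_0) = u_0$, the equality case of Proposition~\ref{dynamic prog-principle} gives $\varphi(t_0 + h, \gamma(t_0 + h)) = \varphi(t_0, x_0) - h$ for small $h \geq 0$; since $\gamma(t_0 + h) - (x_0 + h k(t_0, x_0) u_0) = \int_{t_0}^{t_0 + h} \big(k(s, \gamma(s)) u(s) - k(t_0, x_0) u(t_0)\big) \diff s = o(h)$ by continuity of $k$ and of $u$ (the latter from Proposition~\ref{prop smooth 1}), Lipschitz continuity of $\varphi$ (Proposition~\ref{varphi is Lipschitz}) yields $\varphi(t_0 + h, x_0 + h k(t_0, x_0) u_0) = \varphi(t_0, x_0) - h + o(h)$, that is, $u_0 \in \mathcal W(t_0, x_0)$.

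For the reverse inclusion $\mathcal W(t_0, x_0) \subseteq \mathcal U(t_0, x_0)$, which is the heart of the theorem, the plan is to argue by contradiction: suppose $u_0 \in \mathcal W(t_0, x_0) \setminus \mathcal U(t_0, x_0)$. I would first record that on $\mathbbm R_+ \times (\mathbbm R^d \setminus \Gamma)$ the set-valued map $\mathcal U$ has nonempty compact values and is upper semicontinuous, which follows from a standard closure argument: optimal trajectories realizing elements of $\mathcal U$ at nearby points stay in a fixed ball (Proposition~\ref{psi}) and their controls are uniformly Lipschitz (Proposition~\ref{prop smooth 1}), so along a subsequence one passes to the limit in \eqref{optimal-control}. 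Hence $2\rho := \operatorname{dist}(u_0, \mathcal U(t_0, x_0)) > 0$. For small $h > 0$, let $y_h$ be the value at time $t_0 + h$ of the solution of \eqref{optimal-control} with constant control $u_0$ starting from $x_0$ at $t_0$, so $y_h = x_0 + h k(t_0, x_0) u_0 + O(h^2)$; since $u_0 \in \mathcal W(t_0, x_0)$ and $\varphi$ is Lipschitz, $\varphi(t_0 + h, y_h) = \varphi(t_0, x_0) - h + \varepsilon(h)$ with $\varepsilon(h) = o(h)$, and $\varepsilon(h) \geq 0$ by Proposition~\ref{dynamic prog-principle}. Pick $\gamma_h \in \Opt(\Gamma, k, t_0 + h, y_h)$ with optimal control $w_h$, and let $\beta_h$ be the concatenation that is constant on $[0, t_0]$, follows the constant control $u_0$ on $[t_0, t_0 + h]$, and coincides with $\gamma_h$ afterwards: then $\beta_h \in \Adm(k)$, $\beta_h(t_0) = x_0$, and $\beta_h$ reaches $\Gamma$ at time exactly $t_0 + \varphi(t_0, x_0) + \varepsilon(h)$. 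By upper semicontinuity of $\mathcal U$ at $(t_0 + h, y_h) \to (t_0, x_0)$ one gets $\operatorname{dist}(w_h(t_0 + h), \mathcal U(t_0, x_0)) \to 0$, hence $\abs{u_0 - w_h(t_0 + h)} \geq \rho$ for $h$ small.

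The key geometric step is corner cutting. Set $\mu_h = a h$ with $a > 0$ a suitable fixed multiple of $\rho^2$, and $z_h = \beta_h(t_0 + 2h + \mu_h)$. Expanding $\beta_h$ on $[t_0, t_0 + 2h + \mu_h]$ — it moves in direction $u_0$ for time $h$, then essentially in direction $w_h(t_0 + h)$ for time $h + \mu_h$ — gives $z_h - x_0 = h k(t_0, x_0) \big(u_0 + (1 + a) w_h(t_0 + h)\big) + O(h^2)$, and since $\abs{u_0 - w_h(t_0 + h)} \geq \rho$, an elementary computation yields $\abs{z_h - x_0} \leq 2 h k(t_0, x_0)(1 - c') + O(h^2)$ for some $c' > 0$ depending only on $\rho$. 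As this is strictly below $2 h k(t_0, x_0)$ and $z_h$ stays $O(h)$-close to $x_0$, for $h$ small the segment from $x_0$ to $z_h$ travelled at constant speed over $[t_0, t_0 + 2h]$ is admissible for $\OCP(\Gamma, k)$; let $\tilde\beta_h$ follow this segment and then an optimal trajectory from $(t_0 + 2h, z_h)$, so that $\tilde\beta_h \in \Adm(k)$, $\tilde\beta_h(t_0) = x_0$, and its first exit time after $t_0$ is at most $2h + \varphi(t_0 + 2h, z_h)$. Now the tail of $\beta_h$ from $(t_0 + 2h + \mu_h, z_h)$ is optimal by Proposition~\ref{PropRestrictionIsOptimal}, so $\varphi(t_0 + 2h + \mu_h, z_h) = \varphi(t_0, x_0) - 2h - \mu_h + \varepsilon(h)$, and the finite-difference lower bound $\frac{\varphi(t_1, x) - \varphi(t_0, x)}{t_1 - t_0} \geq c - 1$ of Proposition~\ref{PropLowerBoundPartialT} gives $\varphi(t_0 + 2h, z_h) \leq \varphi(t_0 + 2h + \mu_h, z_h) + (1 - c) \mu_h = \varphi(t_0, x_0) - 2h + \varepsilon(h) - c \mu_h$. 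Hence $\tilde\beta_h$ reaches $\Gamma$ from $(t_0, x_0)$ in time at most $\varphi(t_0, x_0) + \varepsilon(h) - c \mu_h$, which is strictly less than $\varphi(t_0, x_0)$ for $h$ small, since $c \mu_h$ is a fixed positive multiple of $h$ while $\varepsilon(h) = o(h)$; this contradicts the definition of $\varphi(t_0, x_0)$ as an infimum, and finishes the proof. The step I expect to be the main obstacle is precisely this balance: the shortcut reaches the favourable point $z_h$ slightly earlier than $\beta_h$ does, and a priori $\varphi(\cdot, z_h)$ might grow fast backward in time and cancel the gain; it is the superficially weak estimate $\partial_t \varphi \geq c - 1 > -1$ of Proposition~\ref{PropLowerBoundPartialT} that keeps the gain $c\mu_h$ positive, and the careful bookkeeping of the $O(h^2)$ and $o(h)$ errors, the admissibility of the shortcut, and the uniformity of the thresholds on $h$ is the remaining technical work.
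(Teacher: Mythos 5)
Your proof is correct, and the key mechanism is the same as the paper's: construct a short admissible ``corner-cutting'' shortcut whose existence would contradict the optimality of $\varphi(t_0, x_0)$, with Proposition~\ref{PropLowerBoundPartialT} supplying the crucial lower bound $\partial_t \varphi \geq c - 1 > -1$ that preserves the time gain. The organization differs somewhat from the paper's, in a way worth noting. The paper does not invoke upper semicontinuity of $\mathcal U$ up front; instead it splits into two cases according to whether $\bar u_1^h \to u_0$ or not, concluding $u_0 \in \mathcal U(t_0, x_0)$ directly by Arzelà--Ascoli in the first case and deriving a contradiction in the second. Your argument by contradiction ($u_0 \in \mathcal W \setminus \mathcal U$, so $\operatorname{dist}(u_0, \mathcal U(t_0, x_0)) = 2\rho > 0$, and then $\abs{u_0 - w_h(t_0+h)} \geq \rho$ by upper semicontinuity) merges those two cases cleanly, but the Arzelà--Ascoli passage to the limit is still needed, just relocated into the proof that $\mathcal U$ is upper semicontinuous. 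The corner-cutting itself is also implemented a bit differently: the paper cuts from $x_0$ to the constant-control point $\bar x_2^n$ and then makes an $O(h^2)$-time correction to land on the optimal trajectory $\gamma_1^n$, quantifying the saving via the explicit factor $\rho = \sqrt{1 - (1-\alpha) K_{\min}^2 / (2 K_{\max}^2)} < 1$; you instead let the second leg of $\beta_h$ run for the slightly longer time $h + \mu_h$ with $\mu_h = a h$, $a$ a small multiple of $\rho^2$, so that the straight-line shortcut from $x_0$ to $z_h$ fits in time $2h$ and the saving is exactly $\mu_h$ by design. Both bookkeepings close the same gap; yours avoids the separate correction step at the cost of verifying admissibility of the shortcut through the elementary bound $\abs{u_0 + (1+a) w} \leq 2(1 - c')$ with $c' > 0$ depending only on $\rho$.

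A few small points to make the write-up airtight: make explicit that upper semicontinuity of $\mathcal U$ needs the continuity of $\varphi$ in $(t, x)$ (Proposition~\ref{varphi is Lipschitz}) when passing to the limit in $\Opt$, so that the limiting trajectory is indeed optimal; state the admissibility check for the shortcut carefully, using that $k(t, x) \geq k(t_0, x_0) - O(h)$ along the segment so that the constant speed $\abs{z_h - x_0}/(2h) \leq k(t_0, x_0)(1 - c') + O(h)$ stays below the pointwise speed limit for $h$ small; and verify that $h + \mu_h < \varphi(t_0 + h, y_h)$ for $h$ small so that the point $z_h$ lies strictly before the exit time of $\gamma_h$, which is needed both to apply Proposition~\ref{PropRestrictionIsOptimal} and to ensure $z_h \notin \Gamma$.
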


\begin{proof}
We first remark that, if $x_0 \in \Gamma$, then $\mathcal{U}(t_0, x_0) = \mathcal{W}(t_0, x_{0}) = \varnothing$, and so we are only left to consider the case $x_0 \in \mathbbm R^d \setminus \Gamma$.

The inclusion $\mathcal{U}(t_0, x_{0}) \subset \mathcal{W}(t_0, x_{0})$ follows from the fact that, if $\gamma \in \Opt(\Gamma, k, t_0, x_0)$ and $u$ is the corresponding optimal control, then, by Proposition~\ref{dynamic prog-principle}, we have, for every $h \in (0, \varphi(t_0, x_0)]$, that
\[\frac{\varphi(t_0 + h, \gamma(t_0 + h)) - \varphi(t_0, x_0)}{h} = -1\]
and, using the facts that $\gamma(t_0 + h) = x_0 + h k(t_0, x_0) u(t_0) + o(h)$ and that $\varphi$ is locally Lipschitz continuous (Proposition~\ref{varphi is Lipschitz}), we deduce, letting $h \to 0^+$, that $u(t_0) \in \mathcal W(t_0, x_0)$.

Let us now show that $\mathcal W(t_0, x_0) \subset \mathcal U(t_0, x_0)$. 
Let $u_{0} \in \mathcal{W}(t_0, x_{0})$ and $h > 0$, which is implicitly always assumed to be small enough. Then, as $h \to 0^+$,
\begin{equation}
\label{eq:u0-in-W}
\varphi(t_0 + h, x_{0} + h k(t_0, x_{0}) u_{0}) = \varphi(t_0, x_{0}) - h + o(h).
\end{equation}
Define $\gamma_{0}: [t_0, t_0 + h] \to \mathbbm R^d$ by 
\begin{equation}\label{ODE cont1}
    \left \{
    \begin{aligned}
    \Dot{\gamma}_{0}(t) & = k(t,\gamma_{0}(t)) u_{0},
    \\  
    \gamma_{0}(t_0) & = x_{0}.       
    \end{aligned} \right.
\end{equation}
Let $x_{1}^h = \gamma_{0}(t_0 + h)$ and $t_1^h = t_0 + h$. Since $\mathbbm R^d \setminus \Gamma$ is open, one has $x_1^h \in \mathbbm R^d \setminus \Gamma$ for $h > 0$ small enough. Let $\gamma_{1}^h \in \Opt(\Gamma, k, t_1^h, x_{1}^h)$ and $u_{1}^h$ be the optimal control associated with $\gamma_{1}^h$. Set $\Bar{u}_{1}^h = u_{1}^h(t_1^h)\in \mathbbm S^{d-1}$ and define $\Bar\gamma_1^h: [t_1^h, t_1^h + h] \to \mathbbm R^d$ by
\begin{equation}\label{ODE cont2}
    \left \{
    \begin{aligned}
    \Dot{\Bar{\gamma}}_{1}^h(t) & = k(t, \Bar{\gamma}_{1}^h(t)) \Bar{u}_{1}^h \\  
    \Bar{\gamma}_{1}^h(t_1^h) & = x_{1}^h.
    \end{aligned} \right.
\end{equation}
Let us also set $t_2^h=t_1^h+h$, $x_{2}^h = \gamma_{1}^h(t_2^h)$ and $\Bar{x}_{2}^h = \Bar{\gamma}_{1}^h(t_2^h)$. We split the sequel of the proof in two cases.

\medskip

\noindent\textbf{Case 1.} We assume in this case that $\lim_{h \to 0^+} \Bar{u}_{1}^h = u_{0}$. Let $\hat u_1^h \in \Lip(\mathbbm R_+; \mathbbm S^{d-1})$ be defined by $\hat u_1^h(t) = \bar u_1^h$ for $t \in [0, t_1^h]$, $\hat u_1^h(t) = u_1^h(t)$ for $t \in [t_1^h, t_1^h + \varphi(t_1^h, x_1^h)]$, and $\hat u_1^h(t) = u_1^h(t_1^h + \varphi(t_1^h, x_1^h))$ for $t \geq t_1^h + \varphi(t_1^h, x_1^h)$. Since $\gamma_{1}^h$ and $\hat u_{1}^h$ are Lipschitz continuous and their Lipschitz constants do not depend on $h$ (see Proposition~\ref{prop smooth 1}), one deduces from Arzelà--Ascoli Theorem that there exist a positive sequence $(h_n)_{n \in \mathbbm N}$ converging to $0$ as $n \to +\infty$ and elements $\gamma^* \in \Lip_{K_{\max}}(\mathbbm R_+; \mathbbm R^d)$ and $u^* \in \Lip(\mathbbm R_+; \mathbbm S^{d-1})$ such that $\gamma_{1}^{h_n} \to \gamma^*$ and $\hat u_{1}^{h_n} \to u^*$ as $n \to +\infty$, uniformly on compact time intervals. Since $\gamma_1^h \in \Opt(\Gamma, k, t_1^h, x_1^h)$ for $h > 0$ and $t_1^h \to t_0$ and $x_1^h \to x_0$ as $h \to 0^+$, one can easily show, using the continuity of $\varphi$, that $\gamma^* \in \Opt(\Gamma, k, t_0, x_0)$ and the restriction of $u^*$ to $[t_0, t_0 + \varphi(t_0, x_0)]$ is its corresponding optimal control. On the other hand, we have
$$
u^*(t_0) = \lim_{n \to +\infty} \hat u_{1}^{h_n} (t_1^{h_n}) = \lim_{n \to +\infty} \Bar{u}_{1}^{h_n} = u_{0},
$$
which implies that $u_0 \in \mathcal U(t_0, x_0)$, as required.

\medskip

\noindent\textbf{Case 2.} We now consider the case where $(\bar u_1^h)_{h > 0}$ does not converge to $u_0$ as $h \to 0^+$, and we prove that this case is not possible. Let $\epsilon > 0$ and $(h_n)_{n\in \mathbbm N}$ be a positive sequence such that $h_n \to 0$ as $n\to +\infty$ and $\abs{\Bar{u}_{1}^{h_n}-u_{0}} \ge \epsilon$ for every $n\in \mathbbm N$. For simplicity, we set $t_1^{h_n}=t_1^n$, $x_{1}^{h_n}=x_{1}^n$, and similarly for all other variables whose upper index is $h_n$. In order to clarify the constructions used in this case, we illustrate them in Figure~\ref{FigCase2}, which represents points and curves already constructed as well as those which will be defined in the sequel of the proof.

\tikzset{
 mid arrow/.style={postaction={decorate,decoration={
        markings,
        mark=at position .5 with {\arrow{Stealth}}
      }}}
}

\begin{figure}[ht]
\centering
\begin{tikzpicture}
\node (x0) at (0, 0) {};
\node (x1) at (0, -2) {};
\node (x2bar) at (3, -2) {};
\node (x2) at (3.25, -3.25) {};

\draw[red, mid arrow] (x0.center) -- node[midway, left] {$\gamma_0$} (x1.center);
\draw[blue, mid arrow] (x0.center) -- node[midway, above right] {$\gamma_2^n$} (x2bar.center);
\draw[violet, mid arrow] (x1.center) to[out = 0, in = 180] node[midway, below left] {$\gamma_1^n$} (x2.center);
\draw[violet, mid arrow] (x2.center) -- ++(1, 0);
\draw[green!50!black, mid arrow] (x1.center) -- node[midway, above] {$\bar \gamma_1^n$} (x2bar.center);
\draw[orange!75!black, mid arrow] (x2bar.center) -- node[midway, right] {$\gamma_3^n$} (x2.center);

\fill (x0) circle[radius=0.05] node[left] {$x_0$};
\fill (x1) circle[radius=0.05] node[left] {$x_1^n$};
\fill (x2bar) circle[radius=0.05] node[right] {$\bar x_2^n$};
\fill (x2) circle[radius=0.05] node[below] {$x_2^n$};
\fill (2, -4/3) circle[radius=0.05] node[above right] {$x_3^n$};
\end{tikzpicture}
\caption{Illustration of the constructions used in the proof of Theorem~\ref{thm Ut_0,x_0}.}
\label{FigCase2}
\end{figure}
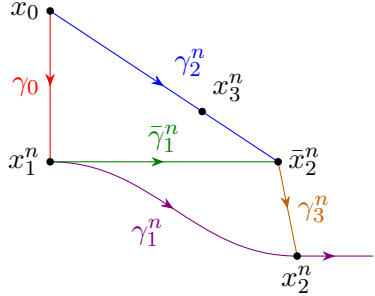

Integrating \eqref{ODE cont1} on $[t_0, t_1^n]$, we get
$$
x_{1}^n-x_{0} = \int_{t_0}^{t_1^n} k(s, \gamma_{0}(s)) \diff s\, u_{0},
$$
and, proceeding similarly for \eqref{ODE cont2}, we get
$$
\Bar{x}_{2}^n- x_{1}^n = \int_{t_1^n}^{t_2^n} k(s, \Bar{\gamma}_{1}^n(s)) \diff s\, \Bar{u}_{1}^n.
$$
Denote the integrals in the right-hand side of the above equalities by $I_0^n$ and $I_1^n$, respectively. We have
\begin{equation*}
    \begin{aligned}
    \abs{\Bar{x}_{2}^n-x_{0}}^2 & = (I_0^n u_{0} + I_1^n \Bar{u}_{1}^n)\cdot (I_0^n u_{0} + I_1^n \Bar{u}_{1}^n)\\
    & = (I_0^n)^2 + (I_1^n)^2 + 2I_0^n I_1^n u_{0} \cdot \Bar{u}_{1}^n \\
    &=\abs{x_{1}^n-x_{0}}^2 + \abs*{\Bar{x}_{2}^n - x_{1}^n}^2 + 2 I_0^n I_1^n u_{0} \cdot \Bar{u}_{1}^n.
    \end{aligned}
\end{equation*}
We know that $\abs{\Bar{u}_{1}^{n}-u_{0}} \ge \epsilon$, which leads us to observe that there exists $\alpha \in (0, 1)$ such that $u_{0} \cdot \Bar{u}_{1}^n < \alpha$ for every $n\in \mathbbm N$. Thus
$$
\abs{\Bar{x}_{2}^n - x_{0}}^2 < \abs{x_{1}^n-x_{0}}^2 + \abs*{\Bar{x}_{2}^n - x_{1}^n}^2 + 2 \alpha I_0^n I_1^n.
$$
Define
$$
\rho := \sqrt{1 - (1-\alpha) \frac{K_{\min}^2}{2 K_{\max}^2}},
$$
then obviously $\rho<1$ and
\begin{align}
\abs{\Bar{x}_{2}^n - x_{0}}^2 & < \left(\abs{x_{1}^n-x_{0}} + \abs*{\Bar{x}_{2}^n - x_{1}^n}\right)^2 - 2 (1 - \alpha) I_0^n I_1^n \notag \\
 & = \left(1 - (1 - \alpha) \frac{2 I_0^n I_1^n}{\left(I_0^n + I_1^n\right)^2}\right)\left(\abs{x_{1}^n-x_{0}} + \abs*{\Bar{x}_{2}^n - x_{1}^n}\right)^2 \notag \\
 & \leq \rho^2 \left(\abs*{ x_{1}^n - x_{0}} + \abs*{\Bar{x}_{2}^n - x_{1}^n}\right)^2, \label{eq:dist-x2-x0}
\end{align}
where we use that $I_i^n \in [h K_{\min}, h K_{\max}]$ for $i \in \{1, 2\}$. Let $u_{2}^n = \frac{\Bar{x}_{2}^n - x_{0}}{\abs{\Bar{x}_{2}^n - x_{0}}}$ (with the convention $u_2^n = 0$ if $\bar x_2^n = x_0$) and define $\gamma_{2}^n: [t_0, t_0 + \tau^n] \to \mathbbm R^d$ by
\begin{equation}\label{ODE cont3}
    \left \{
    \begin{aligned}
    \Dot{\gamma}_{2}^n(t) & = k(t, \gamma_{2}^n(t)) u_{2}^n \\  
    \gamma_{2}^n(t_0) & = x_{0},       
    \end{aligned}
		\right.
\end{equation}
where $\tau^n \geq 0$ is chosen so that $\gamma_2^n(t_0 + \tau^n) = \bar x_2^n$.

\begin{claim}
As $n \to +\infty$, we have $\tau^n \leq 2 \rho h_n + O(h_n^2)$.
\end{claim}

\begin{proof}
Note that we have nothing to prove in the case $\bar x_2^n = x_0$, and hence we assume $\bar x_2^n \neq x_0$ in the sequel. If $\abs{\bar x_2^n - x_0} \leq \rho \abs{x_1^n - x_0}$, we let $x_3^n = \bar x_2^n$, otherwise we choose $x_{3}^n$ as the unique point in the segment $(x_{0}, \bar x_{2}^n)$ such that $\abs{x_{3}^n - x_{0}} = \rho \abs{x_{1}^n - x_{0}}$. In both cases, we have $\abs{x_{3}^n - x_{0}} = \bar\rho \abs{x_{1}^n - x_{0}}$ for some $\bar \rho \leq \rho$. Let $\tau_1^n$ be the time that $\gamma_{2}^n$ takes to reach the point $x_{3}^n$, i.e., $\gamma_{2}^n (t_0 + \tau_1^n) = x_{3}^n$. (Note that $\tau_1^n = \tau^n$ in the case $\abs{\bar x_2^n - x_0} \leq \rho \abs{x_1^n - x_0}$.) We show that $\tau_1^n \le \rho h_n + O(h_n^2)$. To obtain that, we observe, by integrating \eqref{ODE cont1} and \eqref{ODE cont3} and doing a change of variables, that
    \begin{align}
    \int_{t_0}^{t_0+\tau_1^n}k(s, \gamma_{2}^n(s)) \diff s & = \abs{x_3^n - x_0} = \bar\rho \abs{x_1^n - x_0} = \bar\rho \int_{t_0}^{t_0 + h_n} k(s, \gamma_0(s)) \diff s \notag \\
		& = \int_{t_0}^{t_0 + \bar\rho h_n} k\left(t_0 + \frac{s - t_0}{\bar\rho}, \gamma_0\left(t_0 + \frac{s - t_0}{\bar\rho}\right)\right) \diff s \notag \\
		& = \int_{t_0}^{t_0+\bar\rho h_n} k(s, \gamma_{2}^n (s))\diff s \notag \\
    & \hphantom{{} = {} } + \int_{t_0}^{t_0+\bar\rho h_n} \left[k\left(t_0 + \frac{s - t_0}{\bar\rho}, \gamma_{0}\left(t_0+\frac{s-t_0}{\bar\rho}\right)\right) - k(s,\gamma_{2}^n(s))\right] \diff s \notag \\
    & = \int_{t_0}^{t_0+\bar\rho h_n} k(s, \gamma_{2}^n(s)) \diff s + O(h_n^2), \label{eq:k-gamma2}
    \end{align}
in which the last equality follows from the Lipschitz continuity of $k$ and the fact that
\begin{align*}
\abs*{\gamma_{0}\left(t_0+\frac{s-t_0}{\bar\rho}\right) - \gamma_{2}^n(s)} & \leq \abs*{\gamma_{0}\left(t_0+\frac{s-t_0}{\bar\rho}\right) - x_0} + \abs*{x_0 - \gamma_{2}^n(s)} \\
 & \leq K_{\max} \left[\frac{s - t_0}{\bar\rho} + (s - t_0)\right].
\end{align*}
Define $F: [0, \tau^n] \to \mathbbm R_+$ by $F(t)=\int_{t_0}^{t_0 + t} k(s,\gamma_{2}^n(s)) \diff s$, then obviously $F$ is increasing, which implies that $F^{-1}$ is well-defined on the range of $F$. Since $F^{\prime}(t) = k(t, \gamma_{2}^n(t))$, $F$ is $K_{\max}$-Lipschitz continuous and, since $(F^{-1})^{\prime}(t)=\frac{1}{F^{\prime}(F^{-1}(t))}$, we also deduce that $F^{-1}$ is $\frac{1}{K_{\min}}$-Lipschitz continuous.
Therefore, by \eqref{eq:k-gamma2}, we deduce that
\begin{equation*}
\tau_1^n = F^{-1}(F(\bar\rho h_n) + O(h_n^2)) = \bar\rho h_n + O(h_n^2) \leq \rho h_n + O(h_n^2).
\end{equation*}
This concludes the proof of the claim in the case $\abs{\bar x_2^n - x_0} \leq \rho \abs{x_1^n - x_0}$, since $\tau_1^n = \tau^n$ in that case.

Otherwise, we have $\bar\rho = \rho$ and $\abs{x_{3}^n - x_{0}} = \rho \abs{x_{1}^n - x_{0}}$, and thus, from \eqref{eq:dist-x2-x0}, we get
$$
\abs*{\Bar{x}_{2}^n-x_{0}} < \rho (\abs*{ x_{1}^n-x_{0}}+\abs*{\Bar{x}_{2}^n- x_{1}^n}) = \abs*{x_{3}^n - x_{0}} + \rho\abs*{\Bar{x}_{2}^n- x_{1}^n}.
$$
On the other hand, since $x_{3}^n$ belongs to the segment $(x_0, \bar x_2^n)$, we have $\abs*{\Bar{x}_{2}^n - x_{0}} = \abs*{\Bar{x}_{2}^n - x_{3}^n}\allowbreak +\abs*{x_{3}^n - x_{0}}$, hence the inequality $\abs*{\Bar{x}_{2}^n - x_{3}^n} \le \rho \abs*{\Bar{x}_{2}^n- x_{1}^n}$ holds. Suppose $\tau_2^n$ is the time the trajectory $\gamma_{2}^n$ takes to go from $x_{3}^n$ to $\Bar{x}_{2}^n$, i.e., $\gamma_{2}^n(t_0 + \tau_1^n + \tau_2^n) = \Bar{x}_{2}^n$, and note that $\tau^n = \tau_1^n + \tau_2^n$. As before, we compare the times between $\abs*{ \Bar{x}_{2}^n - x_{3}^n}$ and $\abs*{ \Bar{x}_{2}^n- x_{1}^n}$. Let $\beta \leq \rho$ be such that $\abs*{\Bar{x}_{2}^n - x_{3}^n}=\beta \abs*{ \Bar{x}_{2}^n- x_{1}^n}$. Proceeding similarly to \eqref{eq:k-gamma2}, we get
\begin{equation*}
    \begin{aligned}
    \int_{0}^{\tau_2^n}&k(s+t_0+\tau_1^n,\gamma_{2}^n(s+t_0+\tau_1^n)) \diff s = \abs*{\Bar{x}_{2}^n - x_{3}^n}=\beta \abs*{ \Bar{x}_{2}^n- x_{1}^n} = \beta \int_{t_1^n}^{t_2^n} k(s, \bar\gamma_1^n(s)) \diff s \\
		& = \int_{0}^{\beta h_n} k\left(\frac{s}{\beta}+t_0+h_n,\Bar{\gamma}_{1}^n\left(\frac{s}{\beta}+t_0+h_n\right)\right) \diff s \\
		& = \int_{0}^{\beta h_n} k(s+t_0+\tau_1^n,\gamma_{2}^n(s+t_0+\tau_1^n)) \diff s\\
    & \hphantom{{} = {}} +\int_{0}^{\beta h_n} \left[k\left(\frac{s}{\beta}+t_0+h_n,\Bar{\gamma}_{1}^n\left(\frac{s}{\beta}+t_0+h_n\right)\right)-k(s+t_0+\tau_1^n,\gamma_{2}^n(s+t_0+\tau_1^n))\right] \diff s\\
    &=\int_{0}^{\beta h_n} k(s+t_0+\tau_1^n,\gamma_{2}^n(s+t_0+\tau_1^n)) \diff s+O(h_n^2),
    \end{aligned}
\end{equation*}
in which the last equality follows from the Lipschitz continuity of $k$ and the facts that $\tau_1^n = O(h_n)$ and
\begin{multline*}
\abs*{\Bar{\gamma}_{1}^n\left(\frac{s}{\beta}+t_0+h_n\right) - \gamma_{2}^n(s+t_0+\tau_1^n))} \\
\leq \abs*{\Bar{\gamma}_{1}^n\left(\frac{s}{\beta}+t_0+h_n\right) - x_1^n} + \abs{x_1^n - x_0} + \abs*{x_0 - \gamma_{2}^n(s+t_0+\tau_1^n))} \\
 \leq K_{\max} \left[\frac{s}{\beta} + h_n + s + \tau_1^n\right].
\end{multline*}
Arguing similarly to above, we deduce that $\tau_2^n= \beta h_n + O(h_n^2)$. Therefore the time $\tau^n$ to reach $\Bar{x}_{2}^n$ from $x_{0}$ satisfies
\[
\tau^n=(\rho+\beta) h_n+O(h_n^2) \leq 2\rho h_n + O(h_n^2). \qedhere
\]
\end{proof}

Let us now compare the trajectories $\Bar{\gamma}_{1}^n$ and $\gamma_{1}^n$ on $[t_1^n, t_2^n]$. Let $\delta_{1}^n(t)= \gamma_{1}^n(t)-\Bar{\gamma}_{1}^n(t)$. Hence, from the ODEs satisfied by the trajectories $\Bar{\gamma}_{1}^n$ and $\gamma_{1}^n$, we have 
\begin{equation*}
    \begin{aligned}
    \delta_{1}^n(t)&=\int_{t_1^n}^{t} \Big[k(s,\gamma_{1}^n(s))u_{1}^n(s)-k(s,\Bar{\gamma}_{1}^n(s))\Bar{u}_{1}^n \Big]\diff s\\
    &=\int_{t_1^n}^{t} \Big[k(s,\gamma_{1}^n(s))-k(s,\Bar{\gamma}_{1}^n(s))\Big]u_{1}^n(s) \diff s
    +\int_{t_1^n}^{t}k(s,\Bar{\gamma}_{1}^n(s))(u_{1}^n(s)-\Bar{u}_{1}^n) \diff s.
    \end{aligned}
\end{equation*}
Since $u_{1}^n$ is the optimal control, by Proposition~\ref{prop smooth 1}, it is Lipschitz continuous. Therefore, denoting by $L > 0$ the Lipschitz constant of $k$ on a bounded set containing the trajectories $\gamma_1^n$ and $\bar\gamma_1^n$ for every $n$, we have
$$
\abs*{\delta_{1}^n(t)} \le L \int_{t_1^n}^{t} \abs*{ \delta_{1}^n(s)} \diff s + K_{\max} \int_{t_1^n}^{t} L \abs*{s-t_1^n} \diff s,
$$
and hence, by using Grönwall's inequality,
$$
\abs*{\delta_{1}^n(t)} \le L K_{\max} \frac{(t-t_1^n)^2}{2} e^{L(t-t_1^n)}.
$$
In particular, if we set $t=t_1^n+h_n$, then
$$
\abs*{x_{2}^n- \Bar{x}_{2}^n} \le L K_{\max} \frac{h_n^2}{2} e^{L h_n} = O(h_n^2).
$$

Let $u_{3}^n = \frac{ x_{2}^n- \Bar{x}_{2}^n}{\abs*{ x_{2}^n- \Bar{x}_{2}^n}}$ (with the convention $x_3^n = 0$ if $x_2^n = \bar x_2^n$) and $\gamma_{3}^n$ be the solution of
\begin{equation}
    \left \{
    \begin{aligned}
    \Dot{\gamma}_{3}^n(t)&=k(t,\gamma_{3}^n(t)) u_{3}^n
    \\  
    \gamma_{3}^n(t_0 + \tau^n)&=\Bar{x}_{2}^n.       
    \end{aligned} \right.
\end{equation}
Using the lower bound $K_{\min}$ on $k$ and the fact that $\abs*{x_{2}^n- \Bar{x}_{2}^n} = O(h_n^2)$, one can easily deduce that the time $\sigma^n$ from $\Bar{x}_{2}^n$ to $ x_{2}^n$ along $\gamma_3^n$ (i.e., $\gamma_3^n(t_0 + \tau^n + \sigma^n) = x_2^n$) satisfies $\sigma^n = O(h_n^2)$.

We have thus constructed two ways to go from $x_0$ to $x_2^n$. The first one is to choose the path containing $x_{0}$, $x_{1}^n$, and $x_{2}^n$, which corresponds to the concatenation of the trajectories $\gamma_0$ on $[t_0, t_1^n]$ and $\gamma_1^n$ on $[t_1^n, t_2^n]$, and the second one is the path containing $x_{0}$, $\Bar{x}_{2}^n$, and $x_{2}^n$, which corresponds to the concatenation of the trajectories $\gamma_2^n$ on $[t_0, t_0 + \tau^n]$ and $\gamma_3^n$ on $[t_0 + \tau^n, t_0 + \tau^n + \sigma^n]$. Letting $T_1^n$ and $T_2^n$ be the times for going from $x_0$ to $x_2^n$ along these two paths, respectively, we have, by construction and the claim, that $T_1^n = 2 h_n$ and $T_2^n = \tau^n + \sigma^n \leq 2\rho h_n + O(h_n^2)$. Hence, since $\rho < 1$, we have, for $n$ large enough, that $T_2^n < T_1^n$.

From \eqref{eq:u0-in-W}, we deduce that
$$
\varphi(t_0, x_{0}) = \varphi(t_1^n, x_{1}^n) + h_n + o(h_n) = \varphi(t_2^n, x_{2}^n) + T_1^n + o(h_n),
$$
where the last equality comes from Proposition~\ref{dynamic prog-principle} and the fact that $\gamma_{1}^n \in \Opt(\Gamma, k, t_1^n, x_{1}^n)$. On the other hand, since the path from $x_0$ to $x_2^n$ going through $\bar x_2^n$ is an admissible trajectory for $k$, we have, by Proposition~\ref{dynamic prog-principle}, that
$\varphi(t_0, x_{0}) \le T_2^n + \varphi(t_0 + T_2^n, x_{2}^n)$. Hence
\begin{equation}\label{time contr}
    \varphi(t_2^n, x_{2}^n) + T_1^n + o(h_n) \le T_2^n + \varphi(t_0 + T_2^n, x_{2}^n).
\end{equation}
We also know that $t_0 + T_2^n < t_0 + T_1^n = t_2^n$ for $n$ large enough. Therefore, by Proposition~\ref{PropLowerBoundPartialT}, there exists a constant $c > 0$ such that
\begin{equation*}
    \varphi(t_2^n,x_{2}^n) > \varphi(t_0 +T_2^n,x_{2}^n) + (c-1) (t_2^n - t_0 - T_2^n) = \varphi(t_0+T_2^n, x_{2}^n) + (c-1) (T_1^n - T_2^n),
\end{equation*}
and, using \eqref{time contr}, we get $(c-1)(T_1^n-T_2^n)+T_1^n+o(h_n)\le T_2^n$, which leads to 
$$
2 h_n + o(h_n) = T_1^n+o(h_n) \le T_2^n \leq 2\rho h_n + O(h_n^2).
$$
Divide above inequality by $h_n$ to observe that
$$
2+o(1) \leq 2\rho+O(h_n).
$$
Finally by letting $n \to +\infty$, we conclude that $\rho\ge 1$, which is a contradiction. Therefore Case~2 will never happen and this ends the proof.
\end{proof}

Motivated by Proposition~\ref{PropWNormalizedGrad}, we introduce the following definition.

\begin{definition}
\label{DefNormalizedGradient}
Consider the optimal control problem $\OCP(\Gamma, k)$ and its value function $\varphi$ under the assumptions \ref{HypoOCP-Gamma}, \ref{HypoOCP-k-Bound}, and \ref{HypoOCP-k-loc-Lip} and let $\mathcal W$ be as in Definition~\ref{DefUW}. If $(t_0, x_0) \in \mathbbm R_+ \times \mathbbm R^d$ is such that $\mathcal{W}(t_0, x_0)$ contains exactly one element $-\omega_0$, then $\omega_0$ is called the \emph{normalized gradient} of $\varphi$ at $(t_0, x_0)$ and denoted by $\omega_0 = \widehat{\nabla\varphi}(t_0, x_0)$.
\end{definition}

As an immediate consequence of Proposition~\ref{PropUSingleElement} and  Theorem~\ref{thm Ut_0,x_0}, we obtain the following characterization of optimal controls.

\begin{corollary}\label{coro normalized}
Consider the optimal control problem $\OCP(\Gamma, k)$ and its value function $\varphi$ under the assumptions \ref{HypoOCP-Gamma}, \ref{HypoOCP-k-Bound}, and \ref{HypoOCP-k-loc-Lip}. Let $(t_0, x_0) \in \mathbbm R_+ \times \mathbbm R^d$, $\gamma \in \Opt(\Gamma, k, t_0, x_0)$, and $u$ be the optimal control associated with $\gamma$. Then, for every $t \in (t_0, t_{0}+\varphi(t_{0}, x_0))$, $\varphi$ admits a normalized gradient at $(t, \gamma(t))$ and $u(t) = -\widehat{\nabla \varphi}(t, \gamma(t))$, i.e.,
\begin{equation}
\label{eq:optimal-flow}
\Dot{\gamma}(t)= -k(t, \gamma(t)) \widehat{\nabla \varphi}(t, \gamma(t)).
\end{equation}

\end{corollary}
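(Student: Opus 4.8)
\emph{Proof strategy.} The plan is to read the statement off from Theorem~\ref{thm Ut_0,x_0} combined with the pointwise rigidity of optimal directions along an optimal trajectory (Proposition~\ref{PropUSingleElement}) and the regularity of optimal controls (Proposition~\ref{prop smooth 1}). Fix $(t_0, x_0) \in \mathbbm R_+ \times \mathbbm R^d$, an optimal trajectory $\gamma \in \Opt(\Gamma, k, t_0, x_0)$ with associated optimal control $u$, and a time $t \in (t_0, t_0 + \varphi(t_0, x_0))$; set $x = \gamma(t)$. The first step is to verify that $u(t) \in \mathcal U(t, x)$. By Proposition~\ref{PropRestrictionIsOptimal}, the curve $\beta$ that stays at $x$ on $[0, t]$ and coincides with $\gamma$ on $[t, +\infty)$ belongs to $\Opt(\Gamma, k, t, x)$; since $\dot\beta(s) = \dot\gamma(s) = k(s, \gamma(s)) u(s)$ for a.e.\ $s \in [t, t_0 + \varphi(t_0, x_0)]$ (an interval which, by the dynamic programming principle of Proposition~\ref{dynamic prog-principle}, is exactly $[t, t + \varphi(t, x)]$) and $k \geq K_{\min} > 0$, the optimal control associated with $\beta$ coincides a.e.\ with $u$ on this interval; both being continuous by Proposition~\ref{prop smooth 1}, they agree at $t$, so that $u(t)$ is the initial value of the optimal control of $\beta$, i.e.\ $u(t) \in \mathcal U(t, x)$.

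Next, Proposition~\ref{PropUSingleElement}, applied to the optimal trajectory $\gamma$ at the interior time $t$, shows that $\mathcal U(t, x)$ is a singleton, hence $\mathcal U(t, x) = \{u(t)\}$, and Theorem~\ref{thm Ut_0,x_0} then yields $\mathcal W(t, x) = \mathcal U(t, x) = \{u(t)\}$. In particular $\mathcal W(t, x)$ contains exactly one element, so by Definition~\ref{DefNormalizedGradient} the normalized gradient $\widehat{\nabla\varphi}(t, x)$ is well-defined; writing the unique element of $\mathcal W(t, x)$ as $-\widehat{\nabla\varphi}(t, x)$ and identifying it with $u(t)$ gives $u(t) = -\widehat{\nabla\varphi}(t, \gamma(t))$. (Proposition~\ref{PropWNormalizedGrad} is what legitimises the name ``normalized gradient'', since it shows $\widehat{\nabla\varphi}(t,x) = -\nabla\varphi(t,x)/\abs{\nabla\varphi(t,x)}$ at points of differentiability of $\varphi$, but it is not needed for the identity itself.)

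Finally I would promote the a.e.\ relation $\dot\gamma = k(\cdot, \gamma) u$ to the pointwise equation \eqref{eq:optimal-flow}: by Proposition~\ref{prop smooth 1} the control $u$ is continuous on $[t_0, t_0 + \varphi(t_0, x_0)]$, hence $s \mapsto k(s, \gamma(s)) u(s)$ is continuous there, so $\gamma$ is in fact $\mathbf C^1$ on that interval and $\dot\gamma(t) = k(t, \gamma(t)) u(t) = -k(t, \gamma(t)) \widehat{\nabla\varphi}(t, \gamma(t))$ for every $t \in (t_0, t_0 + \varphi(t_0, x_0))$. I do not expect a genuine obstacle here: the corollary is essentially a combination of already established facts, and the only point requiring a little care is the first step, namely certifying that the value $u(t)$ lies in $\mathcal U(t, \gamma(t))$, which is where the restriction property and the continuity of optimal controls enter.
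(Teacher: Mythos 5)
Your proof is correct and follows essentially the same route as the paper, which presents the corollary as an immediate consequence of Propositions~\ref{PropUSingleElement} and \ref{PropWNormalizedGrad} and Theorem~\ref{thm Ut_0,x_0}; you simply spell out the bookkeeping, in particular the verification (via Proposition~\ref{PropRestrictionIsOptimal} and the continuity of optimal controls from Proposition~\ref{prop smooth 1}) that $u(t)$ actually lies in $\mathcal{U}(t,\gamma(t))$. Your remark that Proposition~\ref{PropWNormalizedGrad} only justifies the terminology and is not needed for the identity itself is accurate and a mild sharpening of the paper's attribution.
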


Combining Proposition~\ref{prop smooth 1} and Corollary~\ref{coro normalized}, for every optimal trajectory $\gamma$, we obtain that $t \mapsto \widehat{\nabla \varphi}(t,\gamma(t))$ is Lipschitz continuous for $t$ between the initial and exit times of $\gamma$. However, this provides no information on the regularity of $(t, x) \mapsto \widehat{\nabla \varphi}(t, x)$, which is the topic of our next result.

\begin{proposition}
\label{PropNormalizedGradientContinuous}
Consider the optimal control problem $\OCP(\Gamma, k)$ and its value function $\varphi$ under the assumptions \ref{HypoOCP-Gamma}, \ref{HypoOCP-k-Bound}, and \ref{HypoOCP-k-loc-Lip}. Then $\widehat{\nabla\varphi}$ is continuous on its domain of definition.
\end{proposition}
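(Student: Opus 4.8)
The plan is to establish \emph{sequential} continuity of $\widehat{\nabla\varphi}$, exploiting the uniform compactness properties of optimal trajectories and of their controls (Propositions~\ref{psi} and \ref{prop smooth 1}) together with the identity $\mathcal U = \mathcal W$ from Theorem~\ref{thm Ut_0,x_0}. Recall that, by Definition~\ref{DefNormalizedGradient} and Theorem~\ref{thm Ut_0,x_0}, the domain of $\widehat{\nabla\varphi}$ is exactly the set of $(t_0, x_0) \in \mathbbm R_+ \times (\mathbbm R^d \setminus \Gamma)$ for which $\mathcal U(t_0, x_0)$ is a singleton, say $\{u_0\}$, in which case $\widehat{\nabla\varphi}(t_0, x_0) = -u_0$. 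I would fix such a point $(t_0, x_0)$ and a sequence $((t_n, x_n))_n$ in the domain of $\widehat{\nabla\varphi}$ with $(t_n, x_n) \to (t_0, x_0)$, set $\omega_n = \widehat{\nabla\varphi}(t_n, x_n)$ (so that $-\omega_n \in \mathcal U(t_n, x_n)$), and reduce, via the standard subsequence criterion for convergence in a metric space, to showing that every subsequence of $(\omega_n)_n$ admits a further subsequence converging to $-u_0$.

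So, given an arbitrary subsequence (not relabeled), for each $n$ I would pick $\gamma_n \in \Opt(\Gamma, k, t_n, x_n)$ whose optimal control $v_n$ satisfies $v_n(t_n) = -\omega_n$, extending each $v_n$ by its endpoint values to a Lipschitz map $\mathbbm R_+ \to \mathbbm S^{d-1}$. Since $(x_n)_n$ is bounded, Proposition~\ref{psi} provides $R > 0$ with $\gamma_n(t) \in B_R$ for all $t \geq 0$ and all $n$, and Proposition~\ref{prop smooth 1} then shows that the curves $\gamma_n$ and the controls $v_n$ are Lipschitz continuous with a common Lipschitz constant; as bounded subsets of $\mathbf C(\mathbbm R^d)$ are precompact and $\mathbbm S^{d-1}$ is compact, the Arzelà--Ascoli theorem yields a further subsequence along which $\gamma_n \to \gamma^*$ and $v_n \to v^*$ uniformly on compact time intervals, with $\gamma^* \in \Lip_{K_{\max}}(\mathbbm R^d)$ and $v^* \in \Lip(\mathbbm S^{d-1})$. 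The essential step is then to verify that $\gamma^* \in \Opt(\Gamma, k, t_0, x_0)$ and that $v^*$ is the optimal control associated with $\gamma^*$: this I would do exactly as in Case~1 of the proof of Theorem~\ref{thm Ut_0,x_0}, using the continuity of $\varphi$ (Proposition~\ref{varphi is Lipschitz}), which gives $\varphi(t_n, x_n) \to \varphi(t_0, x_0)$, the closedness of $\Gamma$, and passage to the limit in $\dot\gamma_n = k(\cdot, \gamma_n) v_n$ on compact subintervals of $(t_0, t_0 + \varphi(t_0, x_0))$.

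To finish, I would observe on one hand that $v^*(t_0) \in \mathcal U(t_0, x_0) = \{u_0\}$ by the very definition of $\mathcal U$, so $v^*(t_0) = u_0$, and on the other hand that, $v^*$ being continuous and $v_n \to v^*$ uniformly near $t_0$ with $t_n \to t_0$, one has $v_n(t_n) \to v^*(t_0)$, that is, $-\omega_n \to u_0$. Hence $\omega_n \to -u_0 = \widehat{\nabla\varphi}(t_0, x_0)$ along this sub-subsequence, completing the argument. I expect the main obstacle to be the stability statement invoked above — that limit points of optimal trajectories for $(\Gamma, k, t_n, x_n)$, as $(t_n, x_n) \to (t_0, x_0)$, are themselves optimal for $(\Gamma, k, t_0, x_0)$ — but, as noted, this is in substance the content of Case~1 in the proof of Theorem~\ref{thm Ut_0,x_0} (resting on the uniform bounds of Propositions~\ref{psi} and \ref{prop smooth 1}, the continuity of $\varphi$, and the closedness of $\Gamma$), so it should require no genuinely new ideas, and everything else in the argument is soft.
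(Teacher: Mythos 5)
Your proposal follows essentially the same route as the paper's proof: reduce via compactness of $\mathbbm S^{d-1}$ to identifying the unique adherent point of $(\omega_n)_n$, pick optimal trajectories $\gamma_n$ and controls $v_n$ realizing $-\omega_n \in \mathcal U(t_n, x_n)$, use Propositions~\ref{psi} and \ref{prop smooth 1} with Arzelà--Ascoli to extract uniformly convergent subsequences, verify (as in Case~1 of Theorem~\ref{thm Ut_0,x_0}) that the limits form an optimal pair for $(\Gamma, k, t_0, x_0)$, and conclude from $\mathcal U(t_0, x_0)$ being a singleton. The argument is correct and matches the paper's, including the technical point of extending $v_n$ to all of $\mathbbm R_+$ before applying Arzelà--Ascoli, which the paper leaves implicit.
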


\begin{proof}
Let $\mathcal U$ be as in Definition~\ref{DefUW} and $D \subset \mathbbm R_+ \times (\mathbbm R^d \setminus \Gamma)$ be the domain of definition of $\widehat{\nabla\varphi}$, i.e., $D = \{(t_0, x_0) \in \mathbbm R_+ \times (\mathbbm R^d \setminus \Gamma) \suchthat \mathcal U(t_0, x_0) \text{ is a singleton}\}$. Let $(t_n, x_n)_{n \in \mathbbm N}$ be a sequence in $D$ converging as $n \to +\infty$ to some $(t_0, x_0) \in D$ and let $\bar u_n = -\widehat{\nabla\varphi}(t_n, x_n)$. We want to show that $\bar u_n \to -\widehat{\nabla\varphi}(t_0, x_0)$ as $n \to +\infty$ and, since $(\bar u_n)_{n \in \mathbbm N}$ is a sequence in the compact set $\mathbbm S^{d-1}$, it suffices to show that $-\widehat{\nabla\varphi}(t_0, x_0)$ is the unique adherent point of $(\bar u_n)_{n \in \mathbbm N}$. Let $\bar u_0$ be an adherent point of $(\bar u_n)_{n \in \mathbbm N}$ and consider a subsequence of $(\bar u_n)_{n \in \mathbbm N}$ converging to $\bar u_0$, which we still denote by $(\bar u_n)_{n \in \mathbbm N}$ for simplicity.

Since $\bar{u}_{n} \in \mathcal{U}(t_n, x_n)$, there exists a sequence of optimal trajectories $(\gamma_{n})_{n \in \mathbbm N}$, $\gamma_n \in \Opt(\Gamma, k, t_n, x_n)$, and a corresponding sequence of optimal controls $(u_{n})_{n \in \mathbbm N}$ such that $u_{n}(t_n) = \bar{u}_{n}$. From Proposition~\ref{prop smooth 1} and Arzelà--Ascoli Theorem, there exist elements $\gamma^*$ and $u^*$ such that, up to extracting a subsequence, $\gamma_{n} \to \gamma^*$ and $u_{n}\to u^*$ uniformly on compact time intervals. One immediately verifies that $u^\ast(t_0) = \bar u_0$, $\gamma^\ast(t_0) = x_0$, and that $\gamma^\ast \in \Opt(\Gamma, k, t_0, x_0)$ and $u^\ast$ is its associated optimal control, which shows that $\bar u_0 \in \mathcal U(t_0, x_0) = \{-\widehat{\nabla\varphi}(t_0, x_0)\}$, as required.
\end{proof}

\section{Minimal-time mean field games}\label{sec MFGs}

After having collected in Section~\ref{sec OCP} several preliminary results on the optimal control problem $\OCP(\Gamma, k)$, we now turn to the study of the main problem considered in the paper, the multi-population minimal-time mean field game $\MFG(\mathbf\Gamma, \mathbf K, \mathbf{m_0})$.
We address existence of equilibria in Section~\ref{SecMFGExist}, study their asymptotic behavior for large time in Section~\ref{SecMFGAsymptotic}, and characterize equilibria as solutions of a system of PDEs in Section~\ref{SecMFGSyst}.

Recall that, according to the presentation provided in Section~\ref{sec MFG model}, equilibria of $\MFG(\mathbf\Gamma,\allowbreak \mathbf K,\allowbreak \mathbf{m_0})$ are described in terms of vectors of measures $\mathbf Q = (Q_1, \dotsc, Q_N) \in \mathcal P(\mathbf C(\mathbbm R_+; \mathbbm R^d))^N$. Given such a vector of measures, we shall consider the $N$ optimal control problems $\OCP(\Gamma_i, k_{\mathbf Q, i})$, with $k_{\mathbf Q, i}$ given by $k_{\mathbf Q, i}(t, x) = K_i(m_t^i, \hat m_t^i, x)$ for $(t, x) \in \mathbbm R_+ \times \mathbbm R^d$ and where $m_t^i = {e_t}_{\#} Q_i$ and $\hat m_t^i$ is defined in \eqref{eq:hat_m_i}. We will denote the value function of $\OCP(\Gamma_i, k_{\mathbf Q, i})$ by $\varphi_{\mathbf Q, i}$, and we omit $\mathbf Q$ from the notation of both $k_{\mathbf Q, i}$ and $\varphi_{\mathbf Q, i}$ when it is clear from the context. For simplicity of notation, we also write $\Adm_i(\mathbf Q)$ for $\Adm(k_i)$ and $\Opt_i(\mathbf\Gamma, \mathbf Q, t_0, x_0)$ for $\Opt(\Gamma_i, k_i, t_0, x_0)$.

\subsection{Existence of equilibria}
\label{SecMFGExist}

The goal of this part is to establish existence of equilibria for $\MFG(\mathbf\Gamma, \mathbf K, \mathbf{m_0})$, which is done by recasting the existence of an equilibrium in terms of the existence of a fixed point of a certain set-valued map and applying a suitable fixed-point theorem. This section follows closely \cite[Section~5]{Mazanti2019Minimal} but, due to the facts that assumptions \ref{HypoMFG-Gamma}--\ref{HypoMFG-K-Lip} are weaker than those from \cite{Mazanti2019Minimal} and that we work here with mean field games in the non-compact state space $\mathbbm R^d$, several proofs must be adapted in a nontrivial way to the present setting. The main result to be proved in this section is the following.

\begin{theorem}\label{exis-Equilib}
Consider the mean field game $\MFG(\mathbf\Gamma, \mathbf K, \mathbf{m_0})$ under assumptions \ref{HypoMFG-Gamma}--\ref{HypoMFG-K-Lip}. Then there exists an equilibrium $\mathbf Q \in \mathcal{P}(\mathbf{C}(\mathbbm R_+; \mathbbm R^d))^N$ for $\MFG(\mathbf\Gamma, \mathbf K, \mathbf{m_0})$.
\end{theorem}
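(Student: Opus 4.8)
The strategy is the classical Kakutani--Fan--Glicksberg fixed-point approach adapted to the Lagrangian setting, following \cite{Mazanti2019Minimal}, but carried out in a space of measures on $\mathbf C(\mathbbm R^d)$ that is not compact because the state space is $\mathbbm R^d$ and $\mathbf m_0$ need not be compactly supported. First I would fix a convex set of candidate equilibria large enough to be invariant. Using Proposition~\ref{psi}, for each $i$ there is a nondecreasing map $\psi_i$ with linear growth so that any optimal trajectory starting in $B_R$ stays in $B_{\psi_i(R)}$; combining this with the fact that optimal trajectories are $K_{\max}$-Lipschitz, I would define, for a suitable nondecreasing $\Psi$ built from $\psi_1,\dots,\psi_N$, the set
\[
\mathcal C = \Bigl\{\mathbf Q \in \mathcal P(\mathbf C(\mathbbm R^d))^N \suchthat {e_0}_{\#}\mathbf Q = \mathbf m_0,\ Q_i\text{-a.e. }\gamma \text{ is } K_{\max}\text{-Lipschitz and } \gamma(t)\in B_{\Psi(\abs{\gamma(0)})}\ \forall t\Bigr\}.
\]
Hypothesis \ref{HypoMFG-phi} is what makes $\mathcal C$ tight: the mass of $m_0^i$ outside $B_R$ is at most $1-\phi(R)\to 0$, and since trajectories starting in $B_R$ remain in the fixed compact set $B_{\Psi(R)}$, the pushforwards $Q_i$ are tight as measures on $\mathbf C(\mathbbm R^d)$ (using the Arzelà--Ascoli characterization of compact subsets of $\mathbf C(\mathbbm R^d)$ together with the uniform Lipschitz bound). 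Hence $\mathcal C$ is convex, nonempty, and relatively compact for the weak topology; one checks it is also closed, so it is compact.

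\textbf{The fixed-point map.} Given $\mathbf Q \in \mathcal C$, set $m_t^i = {e_t}_{\#}Q_i$, $\hat m_t^i$ as in \eqref{eq:hat_m_i}, and $k_i(t,x) = K_i(m_t^i,\hat m_t^i,x)$. By \ref{HypoMFG-K-Bound}--\ref{HypoMFG-K-Lip} and continuity of $t\mapsto m_t^i$ (which follows from $Q_i$-a.e. curve being $K_{\max}$-Lipschitz, giving $\mathbf W_1$-continuity, hence weak continuity), each $k_i$ satisfies \ref{HypoOCP-Gamma}--\ref{HypoOCP-k-Lip}, so $\OCP(\Gamma_i,k_i)$ and its value function $\varphi_i$ are well-defined and Proposition~\ref{PropExistOpt} applies. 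Define
\[
\Phi(\mathbf Q) = \Bigl\{\mathbf Q' \in \mathcal C \suchthat {e_0}_{\#}\mathbf Q' = \mathbf m_0,\ Q_i'\text{-a.e. }\gamma \in \Opt(\Gamma_i,k_i,0,\gamma(0))\ \forall i\Bigr\}.
\]
A fixed point of $\Phi$ is exactly an equilibrium. Nonemptiness of $\Phi(\mathbf Q)$ is obtained by a measurable-selection argument: the set-valued map $x_0 \mapsto \Opt(\Gamma_i,k_i,0,x_0)$ has nonempty closed values and measurable (indeed closed) graph, so it admits a Borel measurable selection $x_0 \mapsto \gamma_{x_0}$, and one sets $Q_i' = (x_0\mapsto\gamma_{x_0})_{\#}m_0^i$; the bounds from Proposition~\ref{psi} show $\mathbf Q'\in\mathcal C$. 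Convexity of $\Phi(\mathbf Q)$ is immediate since the defining condition is linear in each $Q_i'$.

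\textbf{Upper semicontinuity and conclusion.} The remaining point is that $\Phi$ has closed graph (equivalently, since $\mathcal C$ is compact, is upper semicontinuous). Suppose $\mathbf Q_n \to \mathbf Q$ and $\mathbf Q_n' \in \Phi(\mathbf Q_n)$ with $\mathbf Q_n' \to \mathbf Q'$ in $\mathcal C$. First, weak convergence $Q_{n,i}\to Q_i$ forces $m_{n,t}^i \to m_t^i$ weakly for each $t$ (composing with the continuous map $e_t$), hence $\hat m_{n,t}^i \to \hat m_t^i$, and by continuity of $K_i$ we get $k_{n,i}(t,x)\to k_i(t,x)$; a uniform-in-$n$ argument using \ref{HypoMFG-K-Lip} and the equicontinuity of $t\mapsto m_{n,t}^i$ upgrades this to locally uniform convergence of $k_{n,i}$ to $k_i$. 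Then I would use the stability of optimal trajectories under perturbation of the running speed: if $\gamma_n \in \Opt(\Gamma_i,k_{n,i},0,\gamma_n(0))$ and $\gamma_n \to \gamma$ uniformly on compact sets with $\gamma_n(0)\to\gamma(0)$, then $\gamma \in \Opt(\Gamma_i,k_i,0,\gamma(0))$ — this follows from locally uniform convergence of value functions $\varphi_{n,i}\to\varphi_i$ (a consequence of the Lipschitz estimates in Proposition~\ref{varphi is Lipschitz}, uniform in $n$, plus pointwise convergence obtained by comparison of admissible trajectories) together with the dynamic programming characterization in Proposition~\ref{dynamic prog-principle}. Finally, to transfer ``$Q_{n,i}'$-a.e. $\gamma$ optimal'' to the limit, I would use a standard closedness argument: the set $O_{n,i} = \{\gamma : \gamma\in\Opt(\Gamma_i,k_{n,i},0,\gamma(0))\}$ has $Q_{n,i}'(O_{n,i})=1$, and by the stability just described, any limit of a sequence $\gamma_n \in O_{n,i}$ lies in $O_i = \{\gamma:\gamma\in\Opt(\Gamma_i,k_i,0,\gamma(0))\}$; a Portmanteau-type argument (e.g. via the fact that $O_i$ is closed and $\limsup$ of the $O_{n,i}$ is contained in $O_i$) then gives $Q_i'(O_i)=1$. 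Hence $\mathbf Q'\in\Phi(\mathbf Q)$, so $\Phi$ has closed graph. Kakutani--Fan--Glicksberg now yields a fixed point $\mathbf Q = \Phi(\mathbf Q)$, which is the desired equilibrium.

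\textbf{Main obstacle.} The delicate step is the stability of optimality under the joint perturbation $(k_n \to k,\ \gamma_n\to\gamma)$ in the non-compact state space: one must ensure the value functions converge locally uniformly and that no mass of optimal trajectories escapes to infinity. Both are controlled by Proposition~\ref{psi} (linear-growth, $\Gamma$-and-$K_{\min},K_{\max}$-dependent bounds that are \emph{uniform} over the family $\{k_{n,i}\}$ since these share the same $K_{\min},K_{\max}$) and by Proposition~\ref{varphi is Lipschitz} (locally Lipschitz bounds likewise uniform in $n$, since the constants depend on $n$ only through $K_{\min},K_{\max}$ and the common local Lipschitz constant of $K_i$), so the argument closes, but verifying these uniformities carefully is the technical heart of the proof.
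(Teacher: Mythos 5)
Your proposal is correct and follows essentially the same architecture as the paper's proof: a compact convex set of candidate Lagrangian measures built from Proposition~\ref{psi} and hypothesis~\ref{HypoMFG-phi} (your $\mathcal C$ plays the role of the paper's $\mathfrak Q$, stated as a support condition rather than a family of mass lower bounds $Q_i(\Lip_{K_{\max}}(B_{\psi(R)}))\geq\phi(R)$, but the two are interchangeable for the purposes of compactness and invariance), a set-valued best-response map with nonempty convex compact values (via measurable selection), closed graph established through continuity of the value function in $(t,x,\mathbf Q)$ and stability of optimal trajectories, and a Kakutani-type fixed point. Where the paper writes out the closed-graph step explicitly by truncating to $\Lip_{K_{\max}}(B_{\psi(R_0)})$ with $\phi(R_0)\geq 1-\epsilon$, proving upper semicontinuity of the restricted map $\widehat\Opt_{i,R_0}$, and then passing to $\epsilon$-neighborhoods $V_\epsilon^i$, you compress this into a Kuratowski-limsup plus Prokhorov-tightness argument; that is sound, but note that the Kuratowski inclusion alone does not suffice — one genuinely needs the uniform tightness coming from $\mathcal C$ (equivalently, working on the compact sublevel $\Lip_{K_{\max}}(B_{\Psi(R)})$ before letting $R\to\infty$), exactly as you flag in your final paragraph and as the paper carries out in Lemma~\ref{comp Fm0}.
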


Let us start by showing an additional continuity property of the value function.

\begin{lemma}
\label{LemmValueFunctionContinuous}
Consider the mean field game $\MFG(\mathbf\Gamma, \mathbf K, \mathbf{m_0})$ under the assumptions \ref{HypoMFG-Gamma}--\ref{HypoMFG-K-Lip}. Then, for every $i \in \{1, \dotsc, N\}$, $(t, x, \mathbf Q) \mapsto \varphi_{\mathbf Q, i}(t, x)$ is continuous on $\mathbbm R_+ \times \mathbbm R^d \times \mathcal P(\mathbf C(\mathbbm R_+; \mathbbm R^d))^N$.
\end{lemma}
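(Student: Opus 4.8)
The plan is to show that if $(t_n, x_n, \mathbf Q_n) \to (t_0, x_0, \mathbf Q_0)$ in $\mathbbm R_+ \times \mathbbm R^d \times \mathcal P(\mathbf C(\mathbbm R^d))^N$, then $\varphi_{\mathbf Q_n, i}(t_n, x_n) \to \varphi_{\mathbf Q_0, i}(t_0, x_0)$. The essential point is a uniform-on-compacts convergence of the speed functions: I claim that for each $i$, $k_{\mathbf Q_n, i} \to k_{\mathbf Q_0, i}$ uniformly on $\mathbbm R_+ \times B_R$ for every $R > 0$. Granting this, the convergence of value functions follows from a stability argument for the optimal control problem $\OCP(\Gamma_i, k)$ with respect to $k$ in the topology of local uniform convergence, combined with the local uniform (indeed, locally Lipschitz, by Proposition~\ref{varphi is Lipschitz}) dependence of $\varphi$ on $(t,x)$ with constants depending only on $\Gamma_i$, $K_{\min}$, $K_{\max}$.

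For the speed-convergence claim, recall $k_{\mathbf Q_n, i}(t,x) = K_i(m_t^{i,n}, \hat m_t^{i,n}, x)$ with $m_t^{i,n} = {e_t}_{\#} Q_i^n$. Since $Q_i^n \to Q_i^0$ weakly in $\mathcal P(\mathbf C(\mathbbm R^d))$ and $e_t: \mathbf C(\mathbbm R^d) \to \mathbbm R^d$ is continuous for each fixed $t$, the continuous mapping theorem gives $m_t^{i,n} \to m_t^{i,0}$ weakly in $\mathcal P(\mathbbm R^d)$ for each fixed $t$, and likewise $\hat m_t^{i,n} \to \hat m_t^{i,0}$. Then, by continuity of $K_i$ on $\mathcal P(\mathbbm R^d) \times \mathcal P(\mathbbm R^d) \times \mathbbm R^d$ (hypothesis \ref{HypoMFG-K-Bound}), we get pointwise convergence $k_{\mathbf Q_n, i}(t,x) \to k_{\mathbf Q_0, i}(t,x)$ for every $(t,x)$. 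To upgrade to uniform convergence on $\mathbbm R_+ \times B_R$: the functions $k_{\mathbf Q_n, i}$ are all $L$-Lipschitz in $x$ uniformly in $t$ and $n$ by \ref{HypoMFG-K-Lip}, so it suffices to get uniformity in $t$, for which I would show equicontinuity in $t$ — here one uses that $t \mapsto m_t^{i,n}$ is equicontinuous (in $\mathbf W_p$, or in a metric for weak convergence) uniformly in $n$, because $Q_i^n$-a.e. curve is $K_{\max}$-Lipschitz (optimal trajectories being $K_{\max}$-Lipschitz), hence $\mathbf W_1(m_s^{i,n}, m_t^{i,n}) \le K_{\max}|s-t|$; combined with tightness of $\{Q_i^n\}$ (from weak convergence) one obtains local uniform convergence of $k_{\mathbf Q_n,i}$ by a standard Arzelà–Ascoli / diagonal argument.

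For the stability of the value function in $k$: fix $R$ and work on the compact-in-time region $[0, T(R)]$, where $T$ is from Proposition~\ref{psi}, so that optimal trajectories from $B_R$ stay in $B_{\psi(R)}$. Given an optimal trajectory $\gamma_0$ for $\OCP(\Gamma_i, k_{\mathbf Q_0,i})$ from $(t_0, x_0)$ with control $u_0$, I would feed $u_0$ into the perturbed dynamics $\dot\gamma_n = k_{\mathbf Q_n,i}(t,\gamma_n)u_0$, $\gamma_n(t_n) = x_n$; a Grönwall estimate using the $x$-Lipschitz bound on $k$ and $\sup_{[0,T(R)]\times B_{\psi(R)}} |k_{\mathbf Q_n,i} - k_{\mathbf Q_0,i}| \to 0$ together with $|t_n - t_0| \to 0$, $|x_n - x_0| \to 0$ shows $\gamma_n \to \gamma_0$ uniformly on $[0, T(R)]$, from which $\varphi_{\mathbf Q_n,i}(t_n,x_n) \le \tau_{\Gamma_i}(t_n, \gamma_n) \le \varphi_{\mathbf Q_0,i}(t_0,x_0) + o(1)$ (arriving near $\Gamma_i$ and then finishing with a short segment at speed $K_{\min}$, as in the proof of Lemma~\ref{lemm-varphi-Lipschitz}); the reverse inequality follows symmetrically using optimal trajectories for $k_{\mathbf Q_n,i}$, which is where one also invokes the uniform bound $\varphi_{\mathbf Q_n,i}(t_n,x_n) \le T(R)$ and the $B_{\psi(R)}$-confinement to keep everything on a fixed compact set. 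I expect the main obstacle to be the passage from pointwise to locally uniform convergence of the speeds $k_{\mathbf Q_n,i}$ — specifically, establishing the uniform-in-$n$ equicontinuity in the time variable — since the state space is not compact and one must combine the $K_{\max}$-Lipschitz regularity of the trajectories under $Q_i^n$ with tightness; everything after that is routine Grönwall and dynamic-programming bookkeeping.
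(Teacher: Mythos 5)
There is a genuine gap in your argument, located precisely where you flag the ``main obstacle.'' You claim that $Q_i^n$-a.e.\ curve is $K_{\max}$-Lipschitz, parenthetically attributing this to ``optimal trajectories being $K_{\max}$-Lipschitz,'' and use it to get $\mathbf W_1(m_s^{i,n}, m_t^{i,n}) \le K_{\max}\abs{s-t}$ and thence uniform-in-$n$ time-equicontinuity of $k_{\mathbf Q_n,i}$. But the lemma is stated for arbitrary $\mathbf Q_n \in \mathcal P(\mathbf C(\mathbbm R^d))^N$: nothing in the hypotheses restricts $Q_i^n$ to be supported on Lipschitz curves or on optimal trajectories. (That would only hold for $\mathbf Q_n$ in the smaller set $\mathfrak Q$ introduced later, but the lemma must hold on the full space, and moreover, even then, $K_i$ is only assumed continuous for the weak topology in \ref{HypoMFG-K-Bound}, so a Wasserstein-Lipschitz bound on $t \mapsto m_t^{i,n}$ would not immediately yield a modulus of continuity for $k_n$ in $t$ uniform in $n$ without an additional tightness/compactness step.) Consequently the local uniform convergence $k_{\mathbf Q_n,i} \to k_{\mathbf Q_0,i}$ on $[0,T(R)]\times B_{\psi(R)}$, which your whole Grönwall-based stability step rests on, is not established.

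The fix is to notice that uniform convergence of the speeds is not needed at all. The paper's proof works on the trajectories rather than on the speeds: take $\gamma_n \in \Opt_i(\mathbf\Gamma,\mathbf Q_n,t_n,x_n)$, which \emph{are} $K_{\max}$-Lipschitz (because they are admissible for $\OCP$, regardless of $\mathbf Q_n$) and confined to $B_{\psi(R)}$ by Proposition~\ref{psi}, extract a uniformly convergent subsequence $\gamma_n \to \gamma_\ast$ by Arzelà--Ascoli, and pass to the limit in the admissibility inequality
\[
\abs[\Big]{\tfrac{\gamma_n(t_2)-\gamma_n(t_1)}{t_2-t_1}} \leq \tfrac{1}{t_2-t_1}\int_{t_1}^{t_2} k_n(s,\gamma_n(s))\diff s
\]
using only the pointwise convergence $k_n(t,x)\to k_\ast(t,x)$ (which you correctly derived from the continuous mapping theorem and \ref{HypoMFG-K-Bound}), the uniform spatial Lipschitz bound \ref{HypoMFG-K-Lip}, and dominated convergence. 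This gives $\gamma_\ast \in \Adm_i(\mathbf Q_\ast)$ reaching $\Gamma_i$, hence $\varphi_\ast(t_\ast,x_\ast) \le \liminf_n \varphi_n(t_n,x_n)$; the reverse inequality follows by modifying $\gamma_n$ with a short $K_{\min}$-speed segment toward $\gamma_\ast$'s exit point, much as in your own proof of Lemma~\ref{lemm-varphi-Lipschitz}. Your Grönwall/stability machinery is sound in itself, but the reduction to uniform convergence of $k_n$ is the wrong reduction here; the compactness should be applied to the $K_{\max}$-Lipschitz optimal trajectories, not to the speeds.
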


\begin{proof}
Fix $i \in \{1, \dotsc, N\}$ and let $(t_n, x_n, \mathbf Q_n)_{n \in \mathbbm N}$ be a sequence taking values in $\mathbbm R_+ \times \mathbbm R^d \times \mathcal P(\mathbf C(\mathbbm R_+; \mathbbm R^d))^N$ converging to some $(t_\ast, x_\ast, \mathbf Q_\ast)$, and denote $\mathbf Q_n = (Q_{1, n}, \dotsc, Q_{N, n})$ and $\mathbf Q_\ast = (Q_{1, \ast}, \dotsc,\allowbreak Q_{N, \ast})$. For $n \in \mathbbm N$ and $(t, x) \in \mathbbm R_+ \times \mathbbm R^d$, define $k_n(t, x) = K(m_{n, t}^i, \hat m_{n, t}^i, x)$ and $k_\ast(t, x) = K(m_{\ast, t}^i, \hat m_{\ast, t}^i, x)$, where $m_{n, t}^i = {e_t}_{\#} Q_{n, i}$, $m_{\ast, t}^i = {e_t}_{\#} Q_{\ast, i}$, and $\hat m_{n, t}^i$ and $\hat m_{\ast, t}^i$ are defined as in \eqref{eq:hat_m_i}. Note that, by continuity of $Q \mapsto {e_t}_{\#} Q$, we have that $k_n(t, x) \to k_\ast(t, x)$ for every $(t, x) \in \mathbbm R_+ \times \mathbbm R^d$. For simplicity of notation, we write $\varphi_n$ and $\varphi_\ast$ for $\varphi_{\mathbf Q_n, i}$ and $\varphi_{\mathbf Q_{\ast}, i}$, respectively.

By Proposition~\ref{psi}, $(\varphi_n(t_n, x_n))_{n \in \mathbbm N}$ is a bounded sequence and thus, to prove that it converges to $\varphi_\ast(t_\ast, x_\ast)$, it suffices to show that $\varphi_\ast(t_\ast, x_\ast)$ is the unique adherent point of $(\varphi_n(t_n, x_n))_{n \in \mathbbm N}$. Let $\kappa_\ast$ be an adherent point of $(\varphi_n(t_n, x_n))_{n \in \mathbbm N}$ and consider the subsequence of $(\varphi_n(t_n, x_n))_{n \in \mathbbm N}$ which converges to $\kappa_\ast$, which we still denote by $(\varphi_n(t_n, x_n))_{n \in \mathbbm N}$ for simplicity.

For $n \in \mathbbm N$, let $\gamma_n \in \Opt_i(\mathbf\Gamma, \mathbf Q_n, t_n, x_n)$. Since $(\gamma_n)_{n \in \mathbbm N}$ is an equibounded and equi-Lipschitz sequence, by Arzelà--Ascoli Theorem, up to extracting a subsequence, which we still denote by $(\gamma_n)_{n \in \mathbbm N}$, there exists $\gamma_\ast \in \Lip_{K_{\max}}(\mathbbm R_+; \mathbbm R^d)$ such that $\gamma_n \to \gamma_\ast$ as $n \to +\infty$ (uniformly on compact time intervals). For every $t_1, t_2 \in \mathbbm R_+$ with $t_1 < t_2$, we have $\abs*{\frac{\gamma_n(t_2) - \gamma_n(t_1)}{t_2 - t_1}} \leq \frac{1}{t_2 - t_1} \int_{t_1}^{t_2} k_n(s, \gamma_n(s)) \diff s$ and, using \ref{HypoMFG-K-Lip} and letting $n \to +\infty$, we deduce that $\abs*{\frac{\gamma_\ast(t_2) - \gamma_\ast(t_1)}{t_2 - t_1}} \leq \frac{1}{t_2 - t_1} \int_{t_1}^{t_2} k_\ast(s, \gamma_\ast(s)) \diff s$, yielding that $\gamma_\ast \in \Adm_i(\mathbf Q_\ast)$. Moreover, since $\gamma_n(t_n) = x_n$, $\gamma_n$ is constant on $[0, t_n]$ and $[t_n + \varphi_n(t_n, x_n), +\infty)$, and $\gamma_n(t_n + \varphi_n(t_n, x_n)) \in \Gamma$ for every $n \in \mathbbm N$, we easily deduce that $\gamma_\ast(t_\ast) = x_\ast$, $\gamma_\ast$ is constant on $[0, t_\ast]$ and $[t_\ast + \kappa_\ast, +\infty)$, and $\gamma_\ast(t_\ast + \kappa_\ast) \in \Gamma$, yielding in particular that $\varphi_\ast(t_\ast, x_\ast) \leq \kappa_\ast$.

Let us assume, to obtain a contradiction, that $\varphi_\ast(t_\ast, x_\ast) < \kappa_\ast$. For simplicity, let $\varsigma_n = t_n + \varphi_\ast(t_\ast, x_\ast)$, $\varsigma_\ast = t_\ast + \varphi_\ast(t_\ast, x_\ast)$, $\xi_n = \gamma_n(\varsigma_n)$, and $\xi_\ast = \gamma_\ast(\varsigma_\ast) \in \Gamma$. Note that, since $\varphi_\ast(t_\ast, x_\ast) < \kappa_\ast$, we have $\xi_n \notin \Gamma$ for $n$ large enough. Let $\tilde\gamma_n \in \mathbf C(\mathbbm R_+; \mathbbm R^d)$ be defined by
\[
\tilde\gamma_n(t) = 
\begin{dcases}
\gamma_n(t) & \text{if } 0 \leq t \leq \varsigma_n, \\
\xi_n + \frac{\xi_\ast - \xi_n}{\abs{\xi_\ast - \xi_n}} K_{\min} (t - \varsigma_n) & \text{if } \varsigma_n \leq t \leq \varsigma_n + \frac{\abs{\xi_\ast - \xi_n}}{K_{\min}}, \\
\xi_\ast & \text{if } t \geq \varsigma_n + \frac{\abs{\xi_\ast - \xi_n}}{K_{\min}}.
\end{dcases}
\]
Clearly, $\tilde\gamma_n \in \Adm_i(\mathbf Q)$ and $\tau_\Gamma(t_n, \tilde\gamma_n) \leq \varphi_\ast(t_\ast, x_\ast) + \frac{\abs{\xi_\ast - \xi_n}}{K_{\min}}$. Since $\varphi_\ast(t_\ast, x_\ast) < \kappa_\ast$, we have $\tau_\Gamma(t_n, \tilde\gamma_n) < \frac{\varphi_\ast(t_\ast, x_\ast) + \kappa_\ast}{2} < \kappa_\ast$ for $n$ large enough, implying that $\varphi_n(t_n, x_n) < \frac{\varphi_\ast(t_\ast, x_\ast) + \kappa_\ast}{2} < \kappa_\ast$ for $n$ large enough and contradicting thus the fact that $\varphi_n(t_n, x_n) \to \kappa_\ast$ as $n \to +\infty$. Hence, one has necessarily $\varphi_n(t_n, x_n) \to \varphi_\ast(t_\ast, x_\ast)$ as $n \to +\infty$, as required.
\end{proof}

The next result, which is an immediate consequence of Proposition~\ref{psi}, states an a priori property of equilibria.

\begin{lemma}\label{phi(R)}
Consider the mean field game $\MFG(\mathbf\Gamma, \mathbf K, \mathbf{m_0})$, assume that \ref{HypoMFG-Gamma} and \ref{HypoMFG-K-Bound} are satisfied, and let $\phi$ be the function from Notation~\ref{Notation-Phi}. Then there exists a nondecreasing function $\psi: \mathbbm R_+ \to \mathbbm R_+$ such that, for every equilibrium $\mathbf Q = (Q_1, \dotsc, Q_N) \in \mathcal P(\mathbf C(\mathbbm R_+; \mathbbm R^d))^N$ of $\MFG(\mathbf\Gamma, \mathbf K, \mathbf{m_0})$, $t \geq 0$, $i \in \{1, \dotsc, N\}$, and $R > 0$, we have
\[
Q_i\left(\Lip_{K_{\max}}(\mathbbm R_+; B_{\psi(R)})\right) \geq \phi(R).
\]
In particular, denoting $m_t^i = {e_t}_{\#} Q_i$, we have
$m_t^{i}(B_{\psi(R)}) \ge \phi(R)$.
\end{lemma}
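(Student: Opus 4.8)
The plan is to apply Proposition~\ref{psi} to the $N$ auxiliary optimal control problems attached to an equilibrium and then transport the resulting pointwise confinement of optimal trajectories through the evaluation maps. Concretely, I would first fix an equilibrium $\mathbf Q = (Q_1, \dotsc, Q_N)$ and, for each $i$, consider $k_i(t, x) = K_i(m_t^i, \hat m_t^i, x)$ with $m_t^i = {e_t}_{\#} Q_i$ and $\hat m_t^i$ given by \eqref{eq:hat_m_i}. Hypothesis \ref{HypoMFG-K-Bound} guarantees that each $k_i$ is continuous and takes values in $[K_{\min}, K_{\max}]$, so that \ref{HypoOCP-k-Bound} holds for every $\OCP(\Gamma_i, k_i)$ and Proposition~\ref{psi} applies, yielding nondecreasing maps $\psi_i : \mathbbm R_+ \to \mathbbm R_+$ depending only on $\Gamma_i$, $K_{\min}$, $K_{\max}$ — and crucially \emph{not} on $\mathbf Q$ — such that any $\gamma \in \Opt(\Gamma_i, k_i, t_0, x_0)$ with $x_0 \in B_R$ satisfies $\gamma(t) \in B_{\psi_i(R)}$ for all $t \ge 0$. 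I would then set $\psi = \max_{i \in \{1, \dotsc, N\}} \psi_i$, which is nondecreasing and still independent of $\mathbf Q$.

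Next I would fix $i$ and $R > 0$ and argue that, up to a $Q_i$-null set, $e_0^{-1}(B_R) \subseteq \Lip_{K_{\max}}(B_{\psi(R)})$. Indeed, since $\mathbf Q$ is an equilibrium, for $Q_i$-a.e.\ $\gamma$ the curve $\gamma$ is optimal for $(\Gamma_i, k_i, 0, \gamma(0))$; for such a $\gamma$ with $\gamma(0) \in B_R$, Proposition~\ref{psi} gives $\gamma(t) \in B_{\psi_i(R)} \subseteq B_{\psi(R)}$ for every $t \ge 0$, while $\gamma \in \Adm(k_i)$ together with $k_i \le K_{\max}$ forces $\gamma$ to be $K_{\max}$-Lipschitz, so $\gamma \in \Lip_{K_{\max}}(B_{\psi(R)})$. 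Noting that $\Lip_{K_{\max}}(B_{\psi(R)})$ is a closed, hence Borel, subset of $\mathbf C(\mathbbm R^d)$, this inclusion yields
\[
Q_i\bigl(\Lip_{K_{\max}}(B_{\psi(R)})\bigr) \geq Q_i\bigl(e_0^{-1}(B_R)\bigr) = ({e_0}_{\#} Q_i)(B_R) = m_0^i(B_R) \geq \phi(R),
\]
the last inequality being \ref{HypoMFG-phi}. For the final assertion, I would observe that any $\gamma \in \Lip_{K_{\max}}(B_{\psi(R)})$ has $\gamma(t) \in B_{\psi(R)}$, i.e.\ lies in $e_t^{-1}(B_{\psi(R)})$, whence $m_t^i(B_{\psi(R)}) = Q_i\bigl(e_t^{-1}(B_{\psi(R)})\bigr) \geq Q_i\bigl(\Lip_{K_{\max}}(B_{\psi(R)})\bigr) \geq \phi(R)$.

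The argument is essentially bookkeeping with pushforwards, and no genuine obstacle is expected; the only point deserving a moment's attention is the uniformity of $\psi$ in the population index $i$, which is immediate here because Proposition~\ref{psi} produces a confinement radius depending only on the target set and on $K_{\min}, K_{\max}$, and there are finitely many populations. A secondary routine check is the Borel measurability of $\Lip_{K_{\max}}(B_{\psi(R)})$ in the topology of $\mathbf C(\mathbbm R^d)$, which follows since both the Lipschitz bound and membership in the closed ball $B_{\psi(R)}$ pass to uniform-on-compacts limits.
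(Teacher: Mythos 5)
Your proposal is correct and takes essentially the same route as the paper, which presents this lemma as an immediate consequence of Proposition~\ref{psi} without a detailed proof: you apply Proposition~\ref{psi} to each $\OCP(\Gamma_i, k_{\mathbf Q, i})$, note that the resulting confinement radius $\psi_i$ depends only on $\Gamma_i$, $K_{\min}$, $K_{\max}$ and not on $\mathbf Q$, take $\psi = \max_i \psi_i$, and then transport the pointwise trajectory bound through the evaluation maps together with the equilibrium property that $Q_i$-a.e.\ curve is optimal. The measurability remark and the explicit maximum over populations are sound bookkeeping details consistent with what the paper intends.
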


Lemma~\ref{phi(R)} shows that it suffices to look for equilibria of $\MFG(\mathbf\Gamma, \mathbf K, \mathbf{m_0})$ in the set
\begin{multline}
\label{eq:defi-Q}
\mathfrak{Q}= \left\{\mathbf Q = (Q_1, \dotsc, Q_N) \in \mathcal{P}(\mathbf{C}(\mathbbm R_+; \mathbbm R^d))^N \suchthat {e_0}_{\#} \mathbf Q = \mathbf{m_0} \text{ and } \right. \\ \left.\forall i \in \{1, \dotsc, N\},\, \forall R >0, \, Q_i\left(\Lip_{K_{\max}}(\mathbbm R_+; B_{\psi(R)})\right) \ge \phi(R)\right\},
\end{multline}
where $\phi$ and $\psi$ are as in the statement of Lemma~\ref{phi(R)}. We next provide elementary properties of $\mathfrak Q$.

\begin{lemma}\label{LemmQNonemptyConvexCompact}
Consider the mean field game $\MFG(\mathbf\Gamma, \mathbf K, \mathbf{m_0})$, assume that \ref{HypoMFG-Gamma} and \ref{HypoMFG-K-Bound} are satisfied, and let $\mathfrak Q$ be the set defined in \eqref{eq:defi-Q}. Then $\mathfrak Q$ is nonempty, convex, and compact with respect to the topology of weak convergence of measures.
\end{lemma}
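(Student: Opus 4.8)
The plan is to verify the three properties in turn, the only substantial one being compactness. For \emph{nonemptiness}, let $\iota\colon\mathbb R^d\to\mathbf C(\mathbb R^d)$ send $x$ to the constant curve $t\mapsto x$; this map is continuous, so $Q_i:=\iota_\#m_0^i\in\mathcal P(\mathbf C(\mathbb R^d))$ for every $i$. Since $e_0\circ\iota=\id_{\mathbb R^d}$ we have ${e_0}_\#Q_i=m_0^i$, and since a constant curve at $x$ is $0$-Lipschitz and has values in $B_{\psi(R)}$ exactly when $x\in B_{\psi(R)}$, we get $Q_i\bigl(\Lip_{K_{\max}}(B_{\psi(R)})\bigr)=m_0^i(B_{\psi(R)})\ge m_0^i(B_R)\ge\phi(R)$; here the middle inequality uses $B_R\subseteq B_{\psi(R)}$, which holds because, by Proposition~\ref{psi}, an optimal trajectory with initial point in $B_R$ satisfies $\gamma(0)\in B_{\psi(R)}$, so $\psi(R)\ge R$ (and if needed one may enlarge $\psi$, which only weakens the constraint defining $\mathfrak Q$). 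Hence $\mathbf Q=(Q_1,\dots,Q_N)\in\mathfrak Q$. \emph{Convexity} is immediate: for $\mathbf Q,\mathbf Q'\in\mathfrak Q$ and $\lambda\in[0,1]$, the vector $\lambda\mathbf Q+(1-\lambda)\mathbf Q'$ is a vector of probability measures whose time-$0$ marginals are again $\mathbf m_0$ (linearity of the pushforward) and whose mass on each $\Lip_{K_{\max}}(B_{\psi(R)})$ is a convex combination of two numbers $\ge\phi(R)$.

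For \emph{compactness}, the key geometric input is that $\Lip_{K_{\max}}(B_r)$ is a compact subset of $\mathbf C(\mathbb R^d)$ for every $r\ge0$: its elements are equi-Lipschitz and uniformly bounded, hence relatively compact for uniform convergence on compact time intervals by the Arzelà--Ascoli theorem (applied on each $[0,n]$ together with a diagonal extraction over $n$), while the set is closed because a limit, uniform on compact intervals, of $K_{\max}$-Lipschitz curves with values in the closed ball $B_r$ is again a $K_{\max}$-Lipschitz, $B_r$-valued curve. Recalling that $\mathcal P(\mathbf C(\mathbb R^d))^N$ is metrizable (as $\mathbf C(\mathbb R^d)$ is Polish), it then suffices to check that $\mathfrak Q$ is relatively compact and closed. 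Relative compactness: fix $i$ and $\varepsilon>0$; using $\lim_{R\to+\infty}\phi(R)=1$, choose $R$ with $\phi(R)\ge1-\varepsilon$, so that $Q_i\bigl(\Lip_{K_{\max}}(B_{\psi(R)})\bigr)\ge1-\varepsilon$ for every $\mathbf Q\in\mathfrak Q$, with $\Lip_{K_{\max}}(B_{\psi(R)})$ compact; thus each marginal family $\{Q_i:\mathbf Q\in\mathfrak Q\}$ is tight, and Prokhorov's theorem gives its relative compactness, hence also that of $\mathfrak Q$ in the product. Closedness: if $\mathbf Q_n\to\mathbf Q$ with $\mathbf Q_n\in\mathfrak Q$, then continuity of $e_0$ yields ${e_0}_\#Q_i=\lim_n{e_0}_\#Q_{i,n}=m_0^i$, and, $\Lip_{K_{\max}}(B_{\psi(R)})$ being closed, the portmanteau theorem gives $Q_i\bigl(\Lip_{K_{\max}}(B_{\psi(R)})\bigr)\ge\limsup_n Q_{i,n}\bigl(\Lip_{K_{\max}}(B_{\psi(R)})\bigr)\ge\phi(R)$. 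Since a closed subset of a relatively compact set is compact, this proves that $\mathfrak Q$ is compact.

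The argument contains no deep difficulty; the step meriting the most care is the compactness of $\Lip_{K_{\max}}(B_r)$ in $\mathbf C(\mathbb R^d)$ — that is, correctly matching the metric \eqref{eq:defi-d-cont} with uniform convergence on compact time intervals and running Arzelà--Ascoli over the unbounded interval $\mathbb R_+$ via a diagonal extraction — after which the tightness, Prokhorov, portmanteau, and linearity arguments are all routine.
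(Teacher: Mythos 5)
Your proposal is correct and follows essentially the same route as the paper: constant-curve pushforward of $\mathbf{m_0}$ for nonemptiness, linearity of pushforward for convexity, and for compactness the combination of Arzelà--Ascoli (for compactness of $\Lip_{K_{\max}}(B_{\psi(R)})$), tightness from $\phi(R)\to1$, Prokhorov's theorem, and the portmanteau inequality together with continuity of ${e_0}_{\#}$ for closedness. The only cosmetic difference is that the paper factors $\mathfrak Q$ as $\mathfrak Q_1^N\cap\mathfrak Q_2$ and proves $\mathfrak Q_1$ compact and $\mathfrak Q_2$ closed separately, whereas you prove relative compactness and closedness of $\mathfrak Q$ directly, which is the same argument organized slightly differently.
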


\begin{proof}
The set $\mathfrak Q$ is clearly convex and, to see that it is nonempty, define $b: \mathbbm R^d \to \mathbf{C}(\mathbbm R_+; \mathbbm R^d)$ as the function which associates with each $x \in \mathbbm R^d$ the function $b(x)$ given by $b(x)(t) = x$ for every $t\in \mathbbm R_+$. It is immediate to check that ${b}_{\#} \mathbf{m_0} \in \mathfrak{Q}$, and hence $\mathfrak{Q}$ is nonempty.

To prove that $\mathfrak Q$ is compact, notice first that $\mathfrak Q = \mathfrak Q_1^N \cap \mathfrak Q_2$, where
\[\mathfrak Q_1 = \left\{Q \in \mathcal P(\mathbf C(\mathbbm R_+; \mathbbm R^d)) \suchthat \forall R > 0,\, Q\left(\Lip_{K_{\max}}(\mathbbm R_+; B_{\psi(R)})\right) \geq \phi(R)\right\}\]
and $\mathfrak Q_2 = \{\mathbf Q \in \mathcal P(\mathbf C(\mathbbm R_+; \mathbbm R^d))^N \suchthat {e_0}_{\#}\mathbf Q = \mathbf{m_0}\}$. Since $\mathbf Q \mapsto {e_0}_{\#} \mathbf Q$ is continuous, $\mathfrak Q_2$ is closed, and hence it suffices to show that $\mathfrak Q_1$ is compact. By Prokhorov Theorem (see, e.g., \cite[Theorem~5.1.3]{Ambrosio1}), it suffices to show that $\mathfrak Q_1$ is tight and closed. Tightness of $\mathfrak Q_1$ follows immediately from the facts that $\phi(R) \to 1$ as $R \to +\infty$ and that, by Arzelà--Ascoli Theorem, for every $R > 0$, $\Lip_{K_{\max}}(\mathbbm R_+; B_{\psi(R)})$ is compact in the topology of uniform convergence on compact sets.

To see that $\mathfrak Q_1$ is closed, let $(Q_{n})_{n \in \mathbbm N}$ be a sequence in $\mathfrak{Q}_1$ converging to some $Q \in \mathcal{P}(\mathbf{C}(\mathbbm R_+; \mathbbm R^d))$. For every $R > 0$, $\Lip_{K_{\max}}(\mathbbm R_+; B_{\psi(R)})$ is closed and thus, by using \cite[Theorem~2.1]{Billingsley},
one obtains
$$
Q(\Lip_{K_{\max}}(\mathbbm R_+; B_{\psi(R)})) \ge \limsup_{n \to \infty} Q_{n}(\Lip_{K_{\max}}(\mathbbm R_+; B_{\psi(R)})) \ge \phi(R),
$$
which proves that $Q \in \mathfrak{Q}_1$. Hence $\mathfrak{Q}_1$ is closed.
\end{proof}

We now recast the definition of equilibrium of $\MFG(\mathbf\Gamma, \mathbf K, \mathbf{m_0})$ in terms of fixed points of a set-valued map defined on $\mathfrak Q$. Let $F:\mathfrak{Q} \rightrightarrows \mathfrak{Q}$ associate with each $\mathbf Q \in \mathfrak{Q}$ the subset $F(\mathbf Q)$ of $\mathfrak Q$ defined by
\begin{multline}
\label{eq:defi-F}
F(\mathbf Q)= \left\{ \Tilde{\mathbf Q} = (\Tilde Q_1, \dotsc, \Tilde Q_N) \in \mathfrak{Q} \suchthat \right.\\
\left.\forall i \in \{1, \dotsc, N\},\, \Tilde{Q}_i\text{-almost every $\gamma$ satisfies } \gamma \in \Opt_i(\mathbf\Gamma, \mathbf Q, 0, \gamma(0)) \right\}.
\end{multline}
Clearly, $\mathbf Q \in \mathfrak Q$ is an equilibrium of $\MFG(\mathbf\Gamma, \mathbf K, \mathbf{m_0})$ if and only if it is a fixed point of $F$, i.e., $\mathbf Q \in F(\mathbf Q)$. For every $i \in \{1, \dotsc, N\}$, we consider the set $\Opt_i(\mathbf Q) \subset \mathbf C(\mathbbm R_+; \mathbbm R^d)$ containing all optimal trajectories of the $i$-th population for $\mathbf Q$ and starting at time $0$, i.e.,
\begin{equation}\label{OPT}
\Opt_i(\mathbf Q) = \bigcup_{x_0 \in \mathbbm R^d} \Opt_i(\mathbf\Gamma, \mathbf Q, 0, x_0).
\end{equation}
The set $F(\mathbf Q)$ can be rewritten in terms of $\Opt_i(\mathbf Q)$ as
\begin{equation}\label{Fm0}
F(\mathbf Q) = \left\{\Tilde{\mathbf Q} = (\Tilde Q_1, \dotsc, \Tilde Q_N) \in \mathfrak{Q} \suchthat \forall i \in \{1, \dotsc, N\},\, \Tilde{Q}_i(\Opt_i(\mathbf Q)) = 1\right\}.
\end{equation}

\begin{lemma}\label{Opt uppersemi}
Consider the mean field game $\MFG(\mathbf\Gamma, \mathbf K, \mathbf{m_0})$ under the assumptions \ref{HypoMFG-Gamma}--\ref{HypoMFG-K-Lip} and let $\mathfrak Q$ and $\Opt_i$, $i \in \{1, \dotsc, N\}$, be defined as in \eqref{eq:defi-Q} and \eqref{OPT}. For every $R>0$ and $i \in \{1, \dotsc, N\}$, define $\widehat\Opt_{i, R}: \mathfrak{Q} \rightrightarrows \mathbf C(\mathbbm R_+; \mathbbm R^d)$ by 
$$ 
\widehat{\Opt}_{i,R}(\mathbf Q) = \Opt_i(\mathbf Q) \cap \Lip_{K_{\max}}(\mathbbm R_+; B_{\psi(R)}).
$$
Then $\widehat{\Opt}_{i,R}$ is upper semicontinuous.
\end{lemma}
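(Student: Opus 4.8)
The plan is to reduce the claim to the closedness of the graph of $\widehat{\Opt}_{i,R}$. Indeed, for every $\mathbf Q \in \mathfrak Q$ one has $\widehat{\Opt}_{i,R}(\mathbf Q) \subset \Lip_{K_{\max}}(B_{\psi(R)})$, and the latter set is compact by Arzelà--Ascoli Theorem; hence $\widehat{\Opt}_{i,R}$ is upper semicontinuous as soon as its graph is closed, i.e., as soon as the following holds: whenever $(\mathbf Q_n)_{n \in \mathbbm N}$ is a sequence in $\mathfrak Q$ with $\mathbf Q_n \to \mathbf Q \in \mathfrak Q$ and $\gamma_n \in \widehat{\Opt}_{i,R}(\mathbf Q_n)$ is such that $\gamma_n \to \gamma$ in $\mathbf C(\mathbbm R^d)$, then $\gamma \in \widehat{\Opt}_{i,R}(\mathbf Q)$. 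Since $\Lip_{K_{\max}}(B_{\psi(R)})$ is closed, we have $\gamma \in \Lip_{K_{\max}}(B_{\psi(R)})$, so it only remains to prove that $\gamma \in \Opt_i(\mathbf Q)$, that is, $\gamma \in \Opt_i(\mathbf\Gamma, \mathbf Q, 0, \gamma(0))$.

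I would first establish that $\gamma \in \Adm_i(\mathbf Q)$, arguing exactly as in the proof of Lemma~\ref{LemmValueFunctionContinuous}: for $0 \leq t_1 < t_2$ the admissibility of $\gamma_n$ gives $\abs*{\gamma_n(t_2) - \gamma_n(t_1)} \leq \int_{t_1}^{t_2} k_{\mathbf Q_n, i}(s, \gamma_n(s)) \diff s$; since $\mathbf Q_n \to \mathbf Q$ implies ${e_s}_{\#} Q_{n,i} \to {e_s}_{\#} Q_i$, hence $\hat m_{n,s}^i \to \hat m_s^i$ by \eqref{eq:hat_m_i}, while $\gamma_n(s) \to \gamma(s)$, continuity of $K_i$ gives $k_{\mathbf Q_n, i}(s, \gamma_n(s)) \to k_{\mathbf Q, i}(s, \gamma(s))$ for every $s$; these integrands being bounded by $K_{\max}$, dominated convergence yields $\abs*{\gamma(t_2) - \gamma(t_1)} \leq \int_{t_1}^{t_2} k_{\mathbf Q, i}(s, \gamma(s)) \diff s$, and dividing by $t_2 - t_1$ and letting $t_2 \to t_1^+$ at a point of differentiability of $\gamma$ gives $\abs*{\dot\gamma(t)} \leq k_{\mathbf Q, i}(t, \gamma(t))$ for a.e.\ $t$, i.e., $\gamma \in \Adm_i(\mathbf Q)$.

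Next I would pass to the limit in the structure of the optimal trajectories. Set $\tau_n = \tau_\Gamma(0, \gamma_n) = \varphi_{\mathbf Q_n, i}(0, \gamma_n(0))$; by Lemma~\ref{LemmValueFunctionContinuous} and $\gamma_n(0) \to \gamma(0)$, we have $\tau_n \to \tau_\ast := \varphi_{\mathbf Q, i}(0, \gamma(0)) < +\infty$. Each $\gamma_n$ being $K_{\max}$-Lipschitz, $\gamma_n(\tau_n) \to \gamma(\tau_\ast)$, so $\gamma(\tau_\ast) \in \Gamma$ since $\gamma_n(\tau_n) \in \Gamma$ and $\Gamma$ is closed; likewise $\gamma_n(t) = \gamma_n(\tau_n)$ for $t \geq \tau_n$ passes to the limit to give $\gamma(t) = \gamma(\tau_\ast)$ for $t \geq \tau_\ast$. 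Thus $\tau_\Gamma(0, \gamma) \leq \tau_\ast$, while, since $\gamma \in \Adm_i(\mathbf Q)$ with initial point $\gamma(0)$, the definition of the value function gives $\tau_\Gamma(0, \gamma) \geq \varphi_{\mathbf Q, i}(0, \gamma(0)) = \tau_\ast$; hence $\tau_\Gamma(0, \gamma) = \tau_\ast < +\infty$. Since in addition $\gamma$ is trivially constant on $[0, 0]$, constant on $[\tau_\ast, +\infty)$, and $\gamma(\tau_\ast) \in \Gamma$, we conclude that $\gamma \in \Opt_i(\mathbf\Gamma, \mathbf Q, 0, \gamma(0)) \subset \Opt_i(\mathbf Q)$, so that $\gamma \in \widehat{\Opt}_{i,R}(\mathbf Q)$, as needed.

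The point requiring most care is the simultaneous passage to the limit in the base point $\gamma_n(0)$ and in the data $\mathbf Q_n$ when identifying $\tau_\ast$ with the value function of the limit problem, which is exactly what Lemma~\ref{LemmValueFunctionContinuous} provides; the only other slightly delicate step is the verification that the uniform limit $\gamma$ is admissible for the limiting dynamics $k_{\mathbf Q, i}$, which cannot be obtained by passing to the limit in $\dot\gamma_n$ and must instead go through the integral formulation as above. Everything else is a routine compactness-and-closedness argument.
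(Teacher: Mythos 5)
Your proposal is correct and follows exactly the route the paper indicates for this omitted proof: reduce upper semicontinuity to closedness of the graph (possible since $\Lip_{K_{\max}}(B_{\psi(R)})$ is compact by Arzelà--Ascoli), pass to the limit in the integral form of admissibility, and invoke the joint continuity of $(t, x, \mathbf Q) \mapsto \varphi_{\mathbf Q, i}(t, x)$ from Lemma~\ref{LemmValueFunctionContinuous} to identify the limiting exit time and conclude optimality of the limit trajectory. All the individual steps (continuity of $Q \mapsto {e_s}_{\#}Q$, equi-Lipschitz convergence giving $\gamma_n(\tau_n) \to \gamma(\tau_\ast)$, the two inequalities pinning $\tau_\Gamma(0, \gamma) = \tau_\ast$) are sound, so this fills in the omitted argument as intended.
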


The proof of Lemma~\ref{Opt uppersemi} is based on the continuity of the value function from Lemma~\ref{LemmValueFunctionContinuous} and follows the same lines as that of \cite[Lemma~5.4]{Mazanti2019Minimal}, being thus omitted here.

\begin{lemma}\label{comp Fm0}
Consider the mean field game $\MFG(\mathbf\Gamma, \mathbf K, \mathbf{m_0})$ under the assumptions \ref{HypoMFG-Gamma}--\ref{HypoMFG-K-Lip} and let $\mathfrak Q$ and $F$ be defined as in \eqref{eq:defi-Q} and \eqref{eq:defi-F}, respectively. Then $F$ is upper semicontinuous and, for every $\mathbf Q \in \mathfrak{Q}$, $F(\mathbf Q)$ is nonempty, convex, and compact.
\end{lemma}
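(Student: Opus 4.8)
The plan is to treat the four assertions separately; convexity is essentially free, nonemptiness rests on a measurable selection of optimal trajectories, compactness of $F(\mathbf Q)$ comes from tightness plus the portmanteau theorem, and upper semicontinuity of $F$ — the only genuinely delicate point — is where Lemma~\ref{Opt uppersemi} enters. Before anything else I would record, for every $\mathbf Q \in \mathfrak Q$, $i \in \{1,\dotsc,N\}$ and $R>0$, that $\widehat\Opt_{i,R}(\mathbf Q) = \Opt_i(\mathbf Q) \cap \Lip_{K_{\max}}(B_{\psi(R)})$ is a \emph{compact} subset of $\mathbf C(\mathbbm R^d)$: it sits inside the compact set $\Lip_{K_{\max}}(B_{\psi(R)})$ (Arzelà--Ascoli), and it is closed because optimality is stable under uniform limits on compact time intervals, exactly as in the argument inside the proof of Lemma~\ref{LemmValueFunctionContinuous} (now with the measure held fixed, using continuity of $\varphi_{\mathbf Q,i}$). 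Since every $\gamma\in\Opt_i(\mathbf Q)$ lies in $\Opt_i(\mathbf\Gamma,\mathbf Q,0,\gamma(0))$ and hence, by Proposition~\ref{psi}, in $\Lip_{K_{\max}}(B_{\psi(R)})$ for $R$ large, we get $\Opt_i(\mathbf Q)=\bigcup_{R\in\mathbbm N}\widehat\Opt_{i,R}(\mathbf Q)$, which is $\sigma$-compact, in particular Borel.

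\textbf{Nonemptiness of $F(\mathbf Q)$.} For each $i$ the set-valued map $x_0\mapsto\Opt_i(\mathbf\Gamma,\mathbf Q,0,x_0)$ has nonempty values by Proposition~\ref{PropExistOpt} and has Borel ($\sigma$-compact) graph $\{(x_0,\gamma):\gamma\in\Opt_i(\mathbf Q),\,\gamma(0)=x_0\}$, so a measurable selection theorem yields an $m_0^i$-measurable map $\sigma_i:\mathbbm R^d\to\mathbf C(\mathbbm R^d)$ with $\sigma_i(x_0)\in\Opt_i(\mathbf\Gamma,\mathbf Q,0,x_0)$. Setting $\tilde Q_i={\sigma_i}_\#m_0^i$, one has ${e_0}_\#\tilde Q_i=m_0^i$ (since $\sigma_i(x_0)(0)=x_0$), $\tilde Q_i$ is concentrated on $\Opt_i(\mathbf Q)$, and, since $\sigma_i(B_R)\subseteq\Lip_{K_{\max}}(B_{\psi(R)})$ by Proposition~\ref{psi} while $m_0^i(B_R)\ge\phi(R)$ by \ref{HypoMFG-phi}, we get $\tilde Q_i(\Lip_{K_{\max}}(B_{\psi(R)}))\ge\phi(R)$. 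Hence $\tilde{\mathbf Q}=(\tilde Q_1,\dotsc,\tilde Q_N)\in F(\mathbf Q)$.

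\textbf{Convexity and compactness of $F(\mathbf Q)$.} Convexity follows from Lemma~\ref{LemmQNonemptyConvexCompact} together with the fact that $\tilde{\mathbf Q}\mapsto\tilde Q_i(\Opt_i(\mathbf Q))$ is affine on the convex set $\mathfrak Q$ and $\Opt_i(\mathbf Q)$ is Borel. For compactness, since $F(\mathbf Q)\subseteq\mathfrak Q$ and $\mathfrak Q$ is compact it suffices to prove $F(\mathbf Q)$ closed: if $\tilde{\mathbf Q}_n\in F(\mathbf Q)$ and $\tilde{\mathbf Q}_n\to\tilde{\mathbf Q}$, then for each $R$, using that $\tilde Q_{i,n}$ is concentrated on $\Opt_i(\mathbf Q)$ and lies in $\mathfrak Q$, $\tilde Q_{i,n}(\widehat\Opt_{i,R}(\mathbf Q))=\tilde Q_{i,n}(\Lip_{K_{\max}}(B_{\psi(R)}))\ge\phi(R)$, so the portmanteau theorem on the closed set $\widehat\Opt_{i,R}(\mathbf Q)$ gives $\tilde Q_i(\widehat\Opt_{i,R}(\mathbf Q))\ge\phi(R)$; letting $R\to+\infty$ yields $\tilde Q_i(\Opt_i(\mathbf Q))=1$, and $\tilde{\mathbf Q}\in\mathfrak Q$ by closedness of $\mathfrak Q$, whence $\tilde{\mathbf Q}\in F(\mathbf Q)$.

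\textbf{Upper semicontinuity of $F$ (the main obstacle).} As $\mathfrak Q$ is compact, it is enough to show that whenever $\mathbf Q_n\to\mathbf Q$ in $\mathfrak Q$, $\tilde{\mathbf Q}_n\in F(\mathbf Q_n)$, and $\tilde{\mathbf Q}_n\to\tilde{\mathbf Q}$, one has $\tilde{\mathbf Q}\in F(\mathbf Q)$. Fix $i$ and $R>0$; as above, $\tilde Q_{i,n}(\widehat\Opt_{i,R}(\mathbf Q_n))\ge\phi(R)$. By Lemma~\ref{Opt uppersemi}, $\widehat\Opt_{i,R}$ is upper semicontinuous and, by the preliminary observation, compact-valued with values in the fixed compact set $\Lip_{K_{\max}}(B_{\psi(R)})$, so its graph $G_{i,R}=\{(\mathbf Q',\gamma)\in\mathfrak Q\times\mathbf C(\mathbbm R^d):\gamma\in\widehat\Opt_{i,R}(\mathbf Q')\}$ is closed. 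Since $\mathbf Q_n\to\mathbf Q$ and $\tilde Q_{i,n}\to\tilde Q_i$, the product measures $\delta_{\mathbf Q_n}\otimes\tilde Q_{i,n}$ converge weakly to $\delta_{\mathbf Q}\otimes\tilde Q_i$ and assign mass $\tilde Q_{i,n}(\widehat\Opt_{i,R}(\mathbf Q_n))\ge\phi(R)$ to $G_{i,R}$; the portmanteau theorem then gives $\tilde Q_i(\widehat\Opt_{i,R}(\mathbf Q))=(\delta_{\mathbf Q}\otimes\tilde Q_i)(G_{i,R})\ge\phi(R)$. Letting $R\to+\infty$ yields $\tilde Q_i(\Opt_i(\mathbf Q))=1$ for every $i$, and, together with $\tilde{\mathbf Q}\in\mathfrak Q$, this gives $\tilde{\mathbf Q}\in F(\mathbf Q)$. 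I expect this last step to be where the work really lies: it needs both the upper semicontinuity of $\widehat\Opt_{i,R}$ from Lemma~\ref{Opt uppersemi} and the uniform tightness built into $\mathfrak Q$, so that no mass escapes to trajectories which are only ``asymptotically'' optimal or which run off to infinity.
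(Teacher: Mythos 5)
Your proof is correct, and structurally it matches the paper's: convexity is immediate, nonemptiness rests on a measurable selection, compactness follows from closedness inside the compact $\mathfrak Q$, and upper semicontinuity is the nontrivial point resting on Lemma~\ref{Opt uppersemi}, the tightness built into $\mathfrak Q$, and the portmanteau theorem. The one place where you genuinely diverge is the mechanism used to transport the upper semicontinuity of $\widehat\Opt_{i,R}$ through the weak limit: the paper works at the level of the definition, fixing $\epsilon>0$, choosing $R_0$ with $\phi(R_0)\ge 1-\epsilon$, introducing the closed $\epsilon$-neighborhoods $V_\epsilon^i$ of $\Opt_i(\mathbf Q)$, and using the neighborhood $W_\epsilon$ from upper semicontinuity to conclude $\widehat\Opt_{i,R_0}(\mathbf Q_n)\subset V_\epsilon^i$ for $n$ large, whence $\tilde Q_{n,i}(V_\epsilon^i)\ge 1-\epsilon$, portmanteau, and $\epsilon\to 0$; you instead pass to the closed-graph characterization of upper semicontinuity for compact-valued maps, form the product measures $\delta_{\mathbf Q_n}\otimes\tilde Q_{i,n}$, and apply portmanteau on the closed graph $G_{i,R}$ directly, then send $R\to+\infty$. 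Both routes use the same ingredients and are of comparable length; your product-measure device is a bit slicker (it avoids introducing the $V_\epsilon^i$ and the two-limit bookkeeping in $\epsilon$ and $R_0$), at the modest cost of invoking weak convergence of product measures and the closed-graph property. For the record, the compactness of $\widehat\Opt_{i,R}(\mathbf Q)$ and the stability of optimality under locally uniform limits, which you flagged as a preliminary, do need the continuity of $\varphi_{\mathbf Q,i}$ (Proposition~\ref{varphi is Lipschitz}) in exactly the way the proof of Lemma~\ref{LemmValueFunctionContinuous} uses it, so that preliminary is not vacuous; and your measurable-selection argument for nonemptiness is the standard route and is what the cited \cite[Lemma~5.3]{Mazanti2019Minimal} does as well.
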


\begin{proof}
Given $\mathbf Q \in \mathfrak Q$, it follows immediately from \eqref{Fm0} that $F(\mathbf Q)$ is convex, and one can easily prove that it is nonempty and compact by adapting the arguments of \cite[Lemma~5.3]{Mazanti2019Minimal} (see also \cite[Lemma~4.7(a)]{Dweik2020Sharp}).

Since $\mathfrak Q$ is compact and $F$ has closed values, to prove that $F$ is upper semicontinuous it is sufficient to show that its graph is closed. Let $(\mathbf Q_n)_{n \in \mathbbm N}$ be a sequence in $\mathfrak{Q}$ with $\mathbf Q_n \to \mathbf Q$ for some $\mathbf Q \in \mathfrak{Q}$ and $(\Tilde{\mathbf Q}_n)_{n \in \mathbbm N}$ be a sequence in $\mathfrak{Q}$ with $\Tilde{\mathbf Q}_n \in F(\mathbf Q_n)$ for every $n\in \mathbbm N$ and $\Tilde{\mathbf Q}_n \to \Tilde{\mathbf Q}$ for some $\Tilde{\mathbf Q} \in \mathfrak{Q}$. We denote $\mathbf Q_n = (Q_{n, 1}, \dotsc, Q_{n, N})$ and $\tilde{\mathbf Q}_n = (\tilde Q_{n, 1}, \dotsc, \tilde Q_{n, N})$.

For each $n \in \mathbbm N$, since $\Tilde{\mathbf Q}_n \in F(\mathbf Q_n)$, we have $\tilde{Q}_{n, i}(\Opt_i(\mathbf Q_n)) = 1$ for every $i \in \{1, \dotsc, N\}$ and, since $\tilde{\mathbf Q}_n \in \mathfrak Q$, we also have that $\tilde{Q}_{n, i}(\Lip_{K_{\max}}(\mathbbm R_+; B_{\psi(R)})) \geq \phi(R)$ for every $i \in \{1, \dotsc, N\}$ and $R > 0$, where $\phi$ is the function from Notation~\ref{Notation-Phi}. Hence $\tilde{Q}_{n, i}(\widehat\Opt_{i, R}(\mathbf Q_n)) \geq \phi(R)$ for every $n \in \mathbbm N$, $i \in \{1, \dotsc, N\}$, and $R > 0$.

For every $\epsilon \in (0, 1)$ and $i \in \{1, \dotsc, N\}$, let  
$V_{\epsilon}^{i} = \{\gamma \in \mathbf{C}(\mathbbm R_+; \mathbbm R^d) \suchthat \mathbf d(\gamma, \Opt_i(\mathbf Q)) \le \epsilon\}$,
where $\mathbf d$ is given by \eqref{eq:defi-d-cont}, and note that $V_\epsilon^i$ is a neighborhood of $\widehat\Opt_{i, R}(\mathbf Q)$ for every $R > 0$. Let $R_0 > 0$ be such that $\phi(R_0) \geq 1 - \epsilon$.
Since $\widehat{\Opt}_{i,R_0}$ is upper semicontinuous by Lemma~\ref{Opt uppersemi}, there exists a neighborhood $W_{\epsilon}$ of $\mathbf Q$ in $\mathfrak{Q}$ such that $\widehat{\Opt}_{i,R_0}(\hat{\mathbf Q}) \subset V_{\epsilon}^{i}$ for every $i \in \{1, \dotsc, N\}$ and $\hat{\mathbf Q} \in W_{\epsilon}$. From the convergence $\mathbf Q_n \to \mathbf Q$, one concludes that there exists $N_{\epsilon}$ such that, for every $n \ge N_{\epsilon}$, one has $\mathbf Q_n \in W_{\epsilon}$, and thus $\widehat{\Opt}_{i,R_0}(\mathbf Q_n) \subset V_{\epsilon}^{i}$.

Since $\Tilde{Q}_{n, i}(\widehat{\Opt}_{i,R_0}(\mathbf Q_n)) \geq \phi(R_0) \geq 1 - \epsilon$, one obtains that $\Tilde{Q}_{n, i}(V_{\epsilon}^{i}) \geq 1 - \epsilon$ for every $n \ge N_{\epsilon}$ and $i \in \{1, \dotsc, N\}$. Since $\Tilde{\mathbf Q}_n \to \Tilde{\mathbf Q}$ and $V_{\epsilon}^{i}$ is closed, we have $\Tilde{Q}_{i}(V_{\epsilon}^{i}) \ge \limsup_{n\to \infty} \Tilde{Q}_{n, i}(V_{\epsilon}^{i}) \geq 1 - \epsilon$. On the other hand, since $\Opt_i(\mathbf Q)$ is closed and $(V_\epsilon^i)_{\epsilon \in (0, 1)}$ is a nondecreasing family of sets with $\bigcap_{\epsilon \in (0, 1)} V_{\epsilon}^i = \Opt_i(\mathbf Q)$, we conclude that $\tilde Q_i(\Opt_i(\mathbf Q)) = \lim_{\epsilon \to 0} \Tilde{Q}_{i}(V_{\epsilon}^{i}) = 1$. Hence $\Tilde{\mathbf Q} \in F(\mathbf Q)$, which concludes the proof that the graph of $F$ is closed.
\end{proof}

Let us now conclude the proof of Theorem~\ref{exis-Equilib}.

\begin{proof}[Proof of Theorem~\ref{exis-Equilib}]
By Lemmas~\ref{LemmQNonemptyConvexCompact} and \ref{comp Fm0} and Kakutani fixed point theorem (see, e.g., \cite[\S~7, Theorem~8.6]{GranasDugandji}), $F$ admits a fixed point, i.e., there exists $\mathbf Q \in \mathfrak{Q}$ such that $\mathbf Q \in F(\mathbf Q)$, which means $\mathbf Q$ is an equilibrium for $\MFG(\mathbf\Gamma, \mathbf K, \mathbf{m_0})$.
\end{proof}

\begin{remark}
Theorem~\ref{exis-Equilib} asserts the existence of an equilibrium for $\MFG(\mathbf\Gamma, \mathbf K, \mathbf{m_0})$, but uniqueness does not necessarily hold. An example of this fact in the single-population case is presented in \cite[Remark~7.1]{Mazanti2019Minimal} under the assumption $K_1 \equiv 1$, in which there is no interaction between agents. 

Let us provide a heuristic example illustrating why uniqueness is not expected in the multi-population case even when agents interact. Consider the case $d = N = 2$, $m_{0}^{1}$ is the uniform measure on $B((-1,0), R)$, $m_{0}^{2}$ is the uniform measure on $B((1,0),R)$, $0< R < 1$, $\Gamma_1= B((1,0),R)$, and $\Gamma_2= B((-1,0), R)$, and assume that $K_1$ and $K_2$ are such that agents are more penalized by the other population than by their own population, i.e., $K_i(\mu, \nu, x) < K_i(\nu, \mu, x)$ if $\nu$ is larger than $\mu$ in a neighbourhood of $x$, for $i =1,\, 2$. In this case, we may expect heuristically the phenomenon of \emph{lane formation}, in which the populations will group in separate lanes, so that each population gets to its target set while avoiding interaction with the other population (see, for instance, \cite{Cristiani2014Multiscale, Gibelli2018Crowd} for more details on lane formation in other kinds of models for crowd motion and in experiments). If the lanes at an equilibrium are asymmetric (which is expected if our model reproduces the behaviour usually observed in experiments), then we obtain another different equilibrium with the same initial conditions by performing the symmetry transformation $(x_1, x_2) \mapsto (x_1, -x_2)$, and hence we do not expect uniqueness of equilibrium in this case.      
\end{remark}

\subsection{Asymptotic behavior}
\label{SecMFGAsymptotic}

In this part, we characterize the behavior of $m_{t}^i$ as $t\to +\infty$, where $m_t^i = {e_t}_{\#} Q_i$ and $\mathbf Q = (Q_1, \dotsc, Q_N)$ is an equilibrium of $\MFG(\mathbf\Gamma, \mathbf K, \mathbf{m_0})$. Intuitively, one expects $m_t^i$ to converge to a measure concentrated on the target set $\Gamma_i$ and, in addition to proving this result in the general case, we also provide convergence rates when the initial measure $m_0^i$ has finite $p$ moments for some $p \in [1, +\infty)$ and prove finite-time convergence when the initial measure has bounded support.

In order to characterize the limit of $m_t^i$ as $t \to +\infty$, let us introduce some notation. Let $\mathbf C_{\lim}(\mathbbm R_+; \mathbbm R^d) = \{\gamma \in \mathbf C(\mathbbm R_+; \mathbbm R^d) \suchthat \lim_{t \to +\infty} \gamma(t) \text{ exists and is finite}\}$, which is a Borel subset of $\mathbf C(\mathbbm R_+; \mathbbm R^d)$, and define $e_{\infty}: \mathbf C_{\lim}(\mathbbm R_+; \mathbbm R^d) \to \mathbbm R^d$ by $e_\infty(\gamma) = \lim_{t \to +\infty} \gamma(t)$, which is a Borel-measurable function. By definition of optimal trajectories, $\Opt_i(\mathbf Q) \subset \mathbf C_{\lim}(\mathbbm R_+; \mathbbm R^d)$ for every $\mathbf Q \in \mathcal P(\mathbf C(\mathbbm R_+; \mathbbm R^d))^N$, and thus ${e_\infty}_{\#} \mathbf Q \in \mathcal P(\mathbbm R^d)^N$ is well-defined for every equilibrium $\mathbf Q$ of a mean field game $\MFG(\mathbf\Gamma, \mathbf K, \mathbf{m_0})$.

We are now in position to state and prove the main result of this section.

\begin{theorem}
\label{ThmAsymp}
Consider the mean field game $\MFG(\mathbf\Gamma, \mathbf K, \mathbf{m_0})$ under assumptions \ref{HypoMFG-Gamma} and \ref{HypoMFG-K-Bound}. Let $\mathbf Q = (Q_1, \dotsc, Q_N) \in \mathcal P(\mathbf C(\mathbbm R_+; \mathbbm R^d))^N$ be an equilibrium of $\MFG(\mathbf\Gamma,\allowbreak \mathbf K,\allowbreak \mathbf{m_0})$, $\mathbf{m}_t = (m_t^1, \dotsc, m_t^N)$ be defined by $\mathbf m_t = {e_t}_{\#} \mathbf Q$ for $t \in [0, +\infty]$, and $\psi$ be the function whose existence is asserted in Proposition~\ref{psi}.

\begin{enumerate}
\item\label{ThmAsymp-Converg} For every $i \in \{1, \dotsc, N\}$, we have $m_t^i \to m_\infty^i$ as $t \to +\infty$.
\item\label{ThmAsymp-Wp} Let $p \in [1, +\infty)$, $i\in \{1, \dotsc, N\}$, and assume that $m_0^{i} \in \mathcal P_p(\mathbbm R^d)$. Then, for every $t \in [0, +\infty]$, we have $m_t^i \in \mathcal P_p(\mathbbm R^d)$. Moreover, there exist constants $\alpha > 0$ and $t_0 \geq 0$ such that
\begin{equation}
\label{eq:cv-rate-Wp}
\mathbf W_p(m_t^i, m_\infty^i)^p \leq 2^p \int_{\mathbbm R^d \setminus B_{\alpha(t - t_0)}} \psi(\abs{x})^p \diff m_0^i(x), \qquad \forall t \geq t_0.
\end{equation}

\item\label{ThmAsymp-FiniteTime} Let $i \in \{1, \dotsc, N\}$ and assume that $m_0^i$ is compactly supported. Then, for every $t \in [0, +\infty]$, $m_t^i$ is compactly supported and there exists $\tau \geq 0$ such that
\[
m_t^i = m_\infty^i, \qquad \forall t \geq \tau.
\]
\end{enumerate}
\end{theorem}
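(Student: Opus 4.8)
The plan is to deduce all three parts from a single structural observation: since $\mathbf Q$ is an equilibrium, for each $i \in \{1, \dotsc, N\}$ the measure $Q_i$ is concentrated on optimal trajectories of $\OCP(\Gamma_i, k_{\mathbf Q, i})$, and any such trajectory $\gamma$ with $\gamma(0) = x_0$ is $K_{\max}$-Lipschitz, reaches $\Gamma_i$ at the finite time $\tau_{\Gamma_i}(0, \gamma) = \varphi_{\mathbf Q, i}(0, x_0)$, and is constant equal to $e_\infty(\gamma) \in \Gamma_i$ on $[\tau_{\Gamma_i}(0, \gamma), +\infty)$. As $\OCP(\Gamma_i, k_{\mathbf Q, i})$ satisfies \ref{HypoOCP-Gamma} and \ref{HypoOCP-k-Bound} (continuity of $k_{\mathbf Q, i}$ in $t$ following from that of $t \mapsto m_t^i$, itself a consequence of $Q_i$ being concentrated on equi-Lipschitz curves), Proposition~\ref{psi} furnishes nondecreasing maps $\psi, T$ of linear growth, which we take uniform in $i$, such that $\abs{\gamma(t)} \leq \psi(\abs{x_0})$ for all $t \geq 0$ and $\tau_{\Gamma_i}(0, \gamma) \leq T(\abs{x_0})$; in particular $\abs{e_\infty(\gamma)} \leq \psi(\abs{x_0})$. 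For \ref{ThmAsymp-Converg} this already gives $e_t(\gamma) \to e_\infty(\gamma)$ as $t \to +\infty$ for $Q_i$-almost every $\gamma$, so for any bounded continuous $f : \mathbbm R^d \to \mathbbm R$ the functions $f \circ e_t$ converge $Q_i$-a.e.\ to $f \circ e_\infty$ while being dominated by $\norm{f}_\infty$; dominated convergence then yields $\int f \diff m_t^i \to \int f \diff m_\infty^i$, i.e.\ $m_t^i \to m_\infty^i$.

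For \ref{ThmAsymp-Wp}, linear growth of $\psi$ provides $C > 0$ with $\psi(r)^p \leq C(1 + r^p)$, so that $\abs{e_t(\gamma)}^p \leq C(1 + \abs{\gamma(0)}^p)$ for $Q_i$-a.e.\ $\gamma$ and every $t \in [0, +\infty]$; integrating in $\gamma$ and using ${e_0}_{\#} Q_i = m_0^i \in \mathcal P_p(\mathbbm R^d)$ shows $m_t^i \in \mathcal P_p(\mathbbm R^d)$. For the rate, the map $\gamma \mapsto (e_t(\gamma), e_\infty(\gamma))$ pushes $Q_i$ forward to an element of $\Pi(m_t^i, m_\infty^i)$, whence
\[
\mathbf W_p(m_t^i, m_\infty^i)^p \leq \int_{\mathbf C(\mathbbm R^d)} \abs{\gamma(t) - e_\infty(\gamma)}^p \diff Q_i(\gamma).
\]
For $Q_i$-a.e.\ $\gamma$ this integrand vanishes once $t \geq \tau_{\Gamma_i}(0, \gamma)$ and is otherwise at most $(\abs{\gamma(t)} + \abs{e_\infty(\gamma)})^p \leq 2^p \psi(\abs{\gamma(0)})^p$; moreover, writing $T(r) \leq c_1 + c_2 r$, the inequality $\tau_{\Gamma_i}(0, \gamma) > t$ forces $\abs{\gamma(0)} > (t - c_1)/c_2$. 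Setting $\alpha = 1/c_2$ and $t_0 = c_1$, for $t \geq t_0$ the integral is confined to $e_0^{-1}(\mathbbm R^d \setminus B_{\alpha(t - t_0)})$, and the change-of-variables formula for the pushforward ${e_0}_{\#} Q_i = m_0^i$ turns the bound into \eqref{eq:cv-rate-Wp}.

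For \ref{ThmAsymp-FiniteTime}, I would pick $R > 0$ with $m_0^i(B_R) = 1$ for every $i$; then $Q_i$-a.e.\ $\gamma$ starts in $B_R$, hence stays in $B_{\psi(R)}$ for all $t \geq 0$ and reaches $\Gamma_i$ before time $T(R)$. Therefore $m_t^i$ is supported in $B_{\psi(R)}$ for every $t \in [0, +\infty]$, and with $\tau := T(R)$ one gets $\gamma(t) = e_\infty(\gamma)$ for $Q_i$-a.e.\ $\gamma$ whenever $t \geq \tau$, so that $m_t^i = {e_t}_{\#} Q_i = {e_\infty}_{\#} Q_i = m_\infty^i$.

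All three arguments reduce to standard dominated-convergence and change-of-variables manipulations once the structural observation above is established; the only place needing a little care is the bookkeeping in \ref{ThmAsymp-Wp}, namely passing from the exit-time estimate $\tau_{\Gamma_i}(0, \gamma) \leq T(\abs{\gamma(0)})$ to the condition $\abs{\gamma(0)} > \alpha(t - t_0)$ and checking that, after pushing forward by $e_0$, the restricted integral is precisely the one over $\mathbbm R^d \setminus B_{\alpha(t - t_0)}$ appearing in \eqref{eq:cv-rate-Wp}.
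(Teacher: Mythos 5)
Your proof is correct and follows essentially the same route as the paper's: part~(1) via dominated convergence, part~(2) via the coupling $(e_t, e_\infty)_{\#} Q_i$ together with the bounds $\psi$ and $T$ from Proposition~\ref{psi} (your choice $\alpha = 1/c_2$, $t_0 = c_1$ is the paper's $T(R) \leq R/\alpha + t_0$), and part~(3) by taking $\tau = T(R)$ on the support radius. The only detail worth noting is that $\psi$ and $T$ from Proposition~\ref{psi} depend on $\Gamma$, so one must take the pointwise maximum over $i \in \{1, \dotsc, N\}$ to get a single pair valid for all populations — which you flag correctly with ``take uniform in $i$.''
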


\begin{remark}
Note that, by Proposition~\ref{psi}, $\psi$ has linear growth and thus, together with the assumption that $m_0^i \in \mathcal P_p(\mathbbm R^d)^N$, one immediately obtains that the right-hand side of \eqref{eq:cv-rate-Wp} tends to $0$ as $t \to +\infty$. When more information on the distribution of $m_0^i$ is available, the right-hand side of \eqref{eq:cv-rate-Wp} allows one to obtain estimates on the convergence rate of $m_t^i$ as $t \to +\infty$ in the Wasserstein distance.
\end{remark}

\begin{proof}[Proof of Theorem~\ref{ThmAsymp}]
To show \ref{ThmAsymp-Converg}, let $f: \mathbbm R^d \to \mathbbm R$ be continuous and bounded and fix $i \in \{1, \dotsc, N\}$. We then have, using the continuity and boundedness of $f$ and Lebesgue's dominated convergence theorem, that
\begin{multline*}
\int_{\mathbbm R^d} f(x) \diff m_t^i(x) = \int_{\mathbf{C}_{\lim}(\mathbbm R_+; \mathbbm R^d)} f(\gamma(t)) \diff Q_i(\gamma) \\ \xrightarrow[t \to +\infty]{} \int_{\mathbf C_{\lim}(\mathbbm R_+; \mathbbm R^d)} f\Bigl(\lim_{t \to +\infty} \gamma(t)\Bigr) \diff Q_i(\gamma) = \int_{\mathbbm R^d} f(x) \diff m_\infty^i(x),
\end{multline*}
yielding the required convergence.

Let us now prove \ref{ThmAsymp-Wp}. For $t \in [0, +\infty]$, we have, using Proposition~\ref{psi}, that
\begin{multline*}
\int_{\mathbbm R^d} \abs{x}^p \diff m_t^i(x) = \int_{\Opt_i(\mathbf Q)} \abs{\gamma(t)}^p \diff Q_i(\gamma) \\ \leq \int_{\Opt_i(\mathbf Q)} \psi(\abs{\gamma(0)})^p \diff Q_i(\gamma) = \int_{\mathbbm R^d} \psi(\abs{x})^p \diff m_0^i(x),
\end{multline*}
where $\gamma(\infty)$ is defined as $\lim_{t \to +\infty} \gamma(t)$. Since $\psi$ has linear growth, it follows that $ m_t^i \in \mathcal P_p(\mathbbm R^d)$ for every $t \in [0, +\infty]$.

Let $T$ be the function whose existence is asserted in Proposition~\ref{psi} and $\alpha > 0$, $t_0 \geq 0$ be such that $T(R) \leq \frac{R}{\alpha} + t_0$ for every $R > 0$. Let $t \in [t_0, +\infty)$. Note that, using the notations introduced in Section~\ref{SecNotation}, we have $(e_t, e_\infty)_{\#} Q_i \in \Pi(m_t^i, m_\infty^i)$ and thus, by \eqref{eq:defi-Wasserstein}, we have
\begin{equation*}
    \mathbf{W}_p(m_t^i, m_\infty^i)^p \le
		\int_{\mathbbm R^d \times \mathbbm R^d} \abs{x - y}^p \diff\, (e_t, e_\infty)_{\#} Q_i(x, y) =
		\int_{\Opt_i(\mathbf Q)} \abs*{e_t(\gamma)-e_{\infty}(\gamma)}^p \diff Q_i(\gamma).
\end{equation*}
If $\gamma \in \Opt_i(\mathbf Q)$ is such that $\abs{\gamma(0)} \leq \alpha(t - t_0)$, then, since $T(\abs{\gamma(0)}) \leq t$, we have, as a consequence of Proposition~\ref{psi}, that $\gamma(t) \in \Gamma_i$ and $\gamma$ is constant on $[t, +\infty)$, yielding that $e_t(\gamma) = e_\infty(\gamma)$. Thus
\begin{equation*}
    \mathbf{W}_p(m_t^i, m_\infty^i)^p \leq \int_{\mathbf{Opt}_i(\mathbf Q) \cap \left\{\gamma \suchthat \gamma(0) \notin B_{\alpha(t - t_0)}\right\}} \abs*{e_t(\gamma)-e_{\infty}(\gamma)}^p \diff Q_i(\gamma).
\end{equation*}
By using the fact from Proposition~\ref{psi} that $e_t(\gamma) \in B_{\psi(\abs*{\gamma(0)})}$ for all $t\in [0, +\infty]$ and $\gamma \in \Opt_i(\mathbf Q)$, one has $\abs*{e_t(\gamma)} \le \psi(\abs*{\gamma(0)})$ and thus
    \begin{align*}
    \mathbf{W}_p(m_t^i, m_\infty^i)^p & \le \int_{\mathbf{Opt}_i(Q)\cap \left\{\gamma \suchthat \gamma(0) \notin B_{\alpha(t - t_0)}\right\}} 2^p \psi(\abs*{\gamma(0)})^p \diff Q_i(\gamma) \displaybreak[0]\\
		& = 2^p \int_{\mathbbm R^d \setminus B_{\alpha(t - t_0)}} \psi(\abs{x})^p \diff m_0^i(x),
    \end{align*}
as required.

Finally, to prove \ref{ThmAsymp-FiniteTime}, let $R_0 > 0$ be such that the support of $m_0^i$ is included in $B_R$ and notice that, as a consequence of Proposition~\ref{psi}, the support of $m_t^i$ is included in $B_{\psi(R_0)}$ for every $t \in [0, +\infty]$. Letting $T$ be as in the statement of Proposition~\ref{psi} and $\tau = T(R_0)$, we deduce that, for every $t \geq \tau$ and $\gamma \in \Opt_i(\mathbf Q)$ with $\abs{\gamma(0)}\leq R_0$, we have $e_t(\gamma) = e_\infty(\gamma)$, which concludes the proof since $Q_i$ is supported in $\Opt_i(\mathbf Q) \cap \{\gamma \suchthat \abs{\gamma(0)} \leq R_0\}$.
\end{proof}

\subsection{The MFG system}
\label{SecMFGSyst}

As a final step in the study of $\MFG(\mathbf\Gamma, \mathbf K, \mathbf{m_0})$, we characterize its equilibria as solutions of a system of partial differential equations, called the \emph{MFG system}. Given an equilibrium $\mathbf Q = (Q_1, \dotsc, Q_N)$, by Proposition~\ref{thm H-J}, the value functions $\varphi_{\mathbf Q, i}$, $i \in \{1, \dotsc, N\}$, corresponding to each population are already known to satisfy a Hamilton--Jacobi equation, and we are thus left to prove that the measures $m_t^i = {e_t}_{\#} Q_i$ are also solutions of suitable partial differential equations. Since $Q_i$ is concentrated on optimal trajectories, which satisfy \eqref{eq:optimal-flow} thanks to Corollary~\ref{coro normalized}, one expects $t \mapsto m_t^i$ to be a solution to a continuity equation with velocity field $-\widehat{\nabla\varphi}_{\mathbf Q, i}$.

In order for the above reasoning to be made precise, one must verify that the assumptions of Corollary~\ref{coro normalized} are satisfied. Since \ref{HypoOCP-k-loc-Lip} requires $k$ to be locally Lipschitz continuous both in time and space, we shall make here the following stronger assumption on $\MFG(\mathbf\Gamma, \mathbf K, \mathbf{m_0})$.

\begin{hypotheses}[resume]
\item\label{HypoMFG-K-loc-Lip-Wass} There exists $p \geq 1$ such that $\mathbf{m_0} \in \mathcal P_p(\mathbbm R^d)^N$ and, for every $i \in \{1, \dotsc, N\}$, $K_i: \mathcal P_p(\mathbbm R^d) \times \mathcal P_p(\mathbbm R^d) \times \mathbbm R^d \to \mathbbm R_+$ is Lipschitz continuous with respect to all its variables (using the Wasserstein distance $\mathbf W_p$ in $\mathcal P_p(\mathbbm R^d)$) and locally in the last variable, i.e., for every $R > 0$, there exists $L > 0$ such that, for every $(\mu_1, \nu_1, x_1), (\mu_2, \nu_2, x_2) \in \mathcal P_p(\mathbbm R^d) \times \mathcal P_p(\mathbbm R^d)^{N-1} \times B_R$, we have
\[
\abs{K_i(\mu_1, \nu_1, x_1) - K_i(\mu_2, \nu_2, x_2)} \leq L \left(\mathbf W_p(\mu_1, \mu_2) + \mathbf W_p(\nu_1, \nu_2) + \abs{x_1 - x_2}\right).
\]
\end{hypotheses}

\begin{remark}
If $\mathbf Q = (Q_1, \dotsc, Q_N) \in \mathcal P(\mathbf C(\mathbbm R_+; \mathbbm R^d))^N$ is such that $Q_i(\Lip_{c}(\mathbbm R_+; \mathbbm R^d)) = 1$ for some $c > 0$ and every $i \in \{1, \dotsc, N\}$, and if ${e_t}_{\#} Q_i \in \mathcal P_p(\mathbbm R^d)$ for some $p \geq 1$ and every $t \geq 0$ and $i \in \{1, \dotsc, N\}$, then one immediately verifies, by considering the coupling measure $(e_t, e_s)_{\#} Q_i \in \Pi(m_t^i, m_s^i)$ in \eqref{eq:defi-Wasserstein}, that $t \mapsto {e_t}_{\#} Q_i$ is Lipschitz continuous with respect to the distance $\mathbf W_p$ in $\mathcal P_p(\mathbbm R^d)$. Hence, if $\MFG(\mathbf\Gamma, \mathbf K, \mathbf{m_0})$ satisfies \ref{HypoMFG-K-loc-Lip-Wass} and $\mathbf Q$ is an equilibrium of $\MFG(\mathbf\Gamma, \mathbf K, \mathbf{m_0})$, the corresponding optimal control problems $\OCP(\Gamma_i, k_{\mathbf Q, i})$, $i \in \{1, \dotsc, N\}$, satisfy \ref{HypoOCP-k-loc-Lip}.
\end{remark}

\begin{theorem}
\label{TheoMFGSyst}
Consider the mean field game $\MFG(\mathbf\Gamma, \mathbf K, \mathbf{m_0})$ under assumptions \ref{HypoMFG-Gamma}, \ref{HypoMFG-K-Bound}, and \ref{HypoMFG-K-loc-Lip-Wass} and assume that $\mathbf Q = (Q_1, \dotsc, Q_N) \in \mathcal P(\mathbf C(\mathbbm R_+; \mathbbm R^d))^N$ is an equilibrium of $\MFG(\mathbf \Gamma, \mathbf K, \mathbf{m_0})$. Consider the value functions $\varphi_i = \varphi_{\mathbf Q,i}$ and the time-dependent measures $m_i(t, \cdot) = m_t^i = {e_t}_{\#} Q_i$ for $i \in \{1, \dotsc, N\}$. Then $(m^1, \dotsc, m^N, \varphi_{1}, \dotsc, \varphi_{N})$ solves the MFG system
\begin{equation}\label{MFGs system}
\begin{aligned}
    \left \{
    \begin{aligned}
    & \partial_t m_i(t, x) - \diverg\Big (m_i(t, x) K_i(m_t^i, \hat{m}_t^i, x) \widehat{\nabla \varphi}_{i}(t, x)\Big) = 0, & & (t, x)\in \mathbbm R_+^* \times (\mathbbm R^d \setminus \Gamma_i),  \\
    & -\partial_t \varphi_{i} (t, x) + \abs*{\nabla \varphi_{i}(t, x)} K_i(m_t^i, \hat{m}_t^i, x) - 1 = 0, & & (t, x)\in \mathbbm R_+ \times (\mathbbm R^d \setminus \Gamma_i),  \\
    & m_i(0, \cdot) = m_0^i, \\
    & \varphi_{i}(t, x) = 0, & & (t, x) \in \mathbbm R_+\times \Gamma_i,
    \end{aligned} \right. \\
\end{aligned}
\end{equation}
for all $i \in \{1, \dotsc, N\}$, where the first and second equations are satisfied, respectively, in the sense of distributions and in the viscosity sense.
\end{theorem}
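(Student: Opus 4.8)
The plan is to check the four equations in \eqref{MFGs system} one by one: the Hamilton--Jacobi equations, the boundary conditions, and the initial conditions are essentially free, and the only real work is the continuity equation. Before anything, I would record that assumption \ref{HypoMFG-K-loc-Lip-Wass} makes the results of Section~\ref{sec OCP} available for each auxiliary problem $\OCP(\Gamma_i, k_{\mathbf Q, i})$. Indeed, by Lemma~\ref{phi(R)} the measure $Q_i$ is concentrated on $\Lip_{K_{\max}}(\mathbbm R^d)$ and, by Theorem~\ref{ThmAsymp}\ref{ThmAsymp-Wp}, $m_t^i = {e_t}_{\#} Q_i \in \mathcal P_p(\mathbbm R^d)$ for every $t$; hence, by the Remark preceding the statement, $k_{\mathbf Q, i}$ satisfies \ref{HypoOCP-k-loc-Lip}, so in particular Corollary~\ref{coro normalized} and Proposition~\ref{PropNormalizedGradientContinuous} apply. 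With this in hand, the second equation of \eqref{MFGs system} is exactly the statement that $\varphi_i = \varphi_{\mathbf Q, i}$ is a viscosity solution of the Hamilton--Jacobi equation \eqref{H-J equation} with $k = k_{\mathbf Q, i}$, which is Proposition~\ref{thm H-J}; the fourth equation, $\varphi_i \equiv 0$ on $\mathbbm R_+ \times \Gamma_i$, is the remaining assertion of Proposition~\ref{thm H-J}; and the third equation is the equilibrium condition ${e_0}_{\#} Q_i = m_0^i$.

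The core of the proof is the continuity equation, for which the natural approach is the Lagrangian-to-Eulerian transfer. Fix $i$ and introduce the velocity field $v_i(t, x) = -k_{\mathbf Q, i}(t, x)\, \widehat{\nabla\varphi}_i(t, x)$, defined wherever $\varphi_i$ admits a normalized gradient and extended by $0$ elsewhere; it is bounded, $\abs{v_i} \leq K_{\max}$, and measurable (being continuous on its domain by Proposition~\ref{PropNormalizedGradientContinuous}). The key observation is that $v_i$ is the Eulerian velocity of the optimal trajectories carried by $Q_i$: for every $\gamma \in \Opt_i(\mathbf Q)$, setting $T_\gamma = \tau_{\Gamma_i}(0, \gamma) = \varphi_i(0, \gamma(0))$, Corollary~\ref{coro normalized} gives $\dot\gamma(t) = -k_{\mathbf Q, i}(t, \gamma(t))\, \widehat{\nabla\varphi}_i(t, \gamma(t)) = v_i(t, \gamma(t))$ for a.e.\ $t \in (0, T_\gamma)$, while for $t \geq T_\gamma$ one has $\gamma(t) \in \Gamma_i$ and $\gamma$ constant. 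Since, moreover, $(t, \gamma(t))$ belongs to the domain of $\widehat{\nabla\varphi}_i$ whenever $t > 0$ and $\gamma(t) \notin \Gamma_i$ (because then necessarily $t \in (0, T_\gamma)$), the field $v_i$ is defined $m_t^i$-a.e.\ for a.e.\ $t$, so that the term $m_i\, k_{\mathbf Q, i}\, \widehat{\nabla\varphi}_i$ appearing in \eqref{MFGs system} is a well-defined vector measure on $\mathbbm R_+^* \times (\mathbbm R^d \setminus \Gamma_i)$.

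It then remains to show that, for every $\xi \in \mathbf C_c^\infty(\mathbbm R_+^* \times (\mathbbm R^d \setminus \Gamma_i))$,
\begin{equation}\label{eq:CE-weak-plan}
\int_0^{+\infty} \int_{\mathbbm R^d} \bigl[\partial_t\xi(t, x) + v_i(t, x) \cdot \nabla\xi(t, x)\bigr] \diff m_t^i(x)\, \diff t = 0,
\end{equation}
which is the weak form of the continuity equation after an integration by parts. I would first push forward through $e_t$ to rewrite the inner integral over $\Opt_i(\mathbf Q)$ against $Q_i$, then apply Fubini's theorem (the integrand is measurable, bounded, and supported in a bounded time interval), and finally use that, for $(\mathcal L^1 \otimes Q_i)$-almost every $(t, \gamma)$, $v_i(t, \gamma(t)) \cdot \nabla\xi(t, \gamma(t)) = \dot\gamma(t) \cdot \nabla\xi(t, \gamma(t))$ — this holds for $t \in (0, T_\gamma)$ by the previous paragraph and holds trivially for $t \geq T_\gamma$, where $\gamma(t) \in \Gamma_i$ forces $\nabla\xi(t, \gamma(t)) = 0$ and $\dot\gamma(t) = 0$. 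This turns the left-hand side of \eqref{eq:CE-weak-plan} into
\[
\int_{\Opt_i(\mathbf Q)} \int_0^{+\infty} \frac{\diff}{\diff t}\bigl[\xi(t, \gamma(t))\bigr] \diff t\, \diff Q_i(\gamma),
\]
using that $t \mapsto \xi(t, \gamma(t))$ is Lipschitz, hence absolutely continuous, with a.e.\ derivative $\partial_t\xi(t, \gamma(t)) + \dot\gamma(t) \cdot \nabla\xi(t, \gamma(t))$. Since $\xi$ is compactly supported in $\mathbbm R_+^* \times (\mathbbm R^d \setminus \Gamma_i)$, the function $t \mapsto \xi(t, \gamma(t))$ vanishes for $t$ near $0$ and for $t \geq T_\gamma$ (as $\gamma(T_\gamma) \in \Gamma_i$), so the inner integral vanishes for every $\gamma \in \Opt_i(\mathbf Q)$ by the fundamental theorem of calculus, which yields \eqref{eq:CE-weak-plan}.

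Conceptually this is just a fiberwise fundamental theorem of calculus combined with $m_t^i = {e_t}_{\#} Q_i$; the delicate points — and what I expect to be the main obstacle — are to argue that the velocity field $-k_{\mathbf Q, i}\, \widehat{\nabla\varphi}_i$ is $m_t^i$-almost everywhere defined and measurable for almost every $t$, so that the PDE even makes sense, which rests on Corollary~\ref{coro normalized} together with the fact that an optimal trajectory lies in $\Gamma_i$ precisely from its exit time on, and to carry out the measure-theoretic bookkeeping (measurability of $\widehat{\nabla\varphi}_i$, applicability of Fubini's theorem, and the pushforward identities) needed to pass rigorously from the trajectory-wise identity to the distributional formulation on $\mathbbm R_+^* \times (\mathbbm R^d \setminus \Gamma_i)$.
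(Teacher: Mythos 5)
Your proposal is correct and follows essentially the same route as the paper, which itself only sketches the continuity-equation step by invoking the chain Proposition~\ref{PropUSingleElement}, Theorem~\ref{thm Ut_0,x_0}, Proposition~\ref{PropNormalizedGradientContinuous}, and Corollary~\ref{coro normalized} and deferring details to the single-population proofs in the cited references. Your Lagrangian-to-Eulerian argument (pushforward through $e_t$, Fubini, and the fiberwise fundamental theorem of calculus for $t \mapsto \xi(t,\gamma(t))$, with the observation that $\nabla\xi$ vanishes on $\Gamma_i$ to handle $t \geq T_\gamma$) is precisely the mechanism those references use, and your identification of $m_t^i$-a.e.\ definedness and measurability of the velocity field as the delicate point matches the paper's emphasis.
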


Note that the Hamilton--Jacobi equations on $\varphi_i$ and the corresponding boundary conditions follow immediately from Proposition~\ref{thm H-J}, and the continuity equations on $m_i$ can be established using \eqref{eq:optimal-flow} and the fact that, from Proposition~\ref{PropUSingleElement}, Theorem~\ref{thm Ut_0,x_0}, Definition~\ref{DefNormalizedGradient}, and Proposition~\ref{PropNormalizedGradientContinuous}, $\widehat{\nabla\varphi}_i$ is continuous on the support of $m_t^i$. We refer to \cite[Theorem~6.1]{Mazanti2019Minimal} and \cite[Theorem~4.12]{Dweik2020Sharp} for more details on the proof in the case of a single population, but we stress the fact, contrarily to those references, we establish here \eqref{eq:optimal-flow}, and hence the continuity equations on $m_i$, under weaker assumptions on $K_i$ and without relying on semiconcavity properties of $\varphi_i$. Notice also that the coupling between the different populations occur through the terms $\hat m_t^i$, which are defined in \eqref{eq:hat_m_i}.

Theorem~\ref{TheoMFGSyst} shows that any equilibrium $\mathbf Q$ of a mean field game $\MFG(\mathbf\Gamma, \mathbf K, \mathbf{m_0})$ satisfies the MFG system \eqref{MFGs system}. To prove that \eqref{MFGs system} actually characterizes equilibria of $\MFG(\mathbf\Gamma, \mathbf K, \mathbf{m_0})$, we also need a converse statement, namely that solutions of \eqref{MFGs system} yield equilibria of $\MFG(\mathbf\Gamma, \mathbf K, \mathbf{m_0})$. Such a converse statement has been sketched in \cite[Remark~6.1]{Mazanti2019Minimal} for single-population minimal-time mean field games. We now provide a more detailed argument in our present setting.

\begin{theorem}
Consider the mean field game $\MFG(\mathbf\Gamma, \mathbf K, \mathbf{m_0})$ under assumptions \ref{HypoMFG-Gamma}, \ref{HypoMFG-K-Bound}, and \ref{HypoMFG-K-loc-Lip-Wass}, and assume in addition that $\mathbf{m_0} = (m_0^1, \dotsc, m_0^N)$ is such that $m_0^i$ is compactly supported for every $i \in \{1, \dotsc, N\}$. For $i \in \{1, \dotsc, N\}$, let $\varphi_i: \mathbbm R_+ \times \mathbbm R^d \to \mathbbm R_+$ and $t \mapsto m_i(t, \cdot) \in \mathcal P(\mathbbm R^d)$ be continuous functions.
% \begin{enumerate}
%     \item Let $i\in \{1, \dotsc, N\}$ and assume that $\varphi_i$ satisfies the second and fourth equations of \eqref{MFGs system} in the viscosity sense. Then $\varphi_i$ is the value function of $\OCP(\Gamma_i,k_i)$, where $k_i: \mathbbm R_+ \times \mathbbm R^d \to \mathbbm R_+$ is defined for $(t, x) \in \mathbbm R_+ \times \mathbbm R^d$ by $k_i(t, x) = K_i(m_t^i, \hat m_t^i, x)$, and $\hat m_t^i$ is defined as in \eqref{eq:hat_m_i}. In particular, $\widehat{\nabla \varphi}_i$ exists and is continuous at $(t,\gamma(t))$ for every optimal trajectory $\gamma$ of $\OCP(\Gamma_i, k_i)$ starting at some time $t_0 \in \mathbbm R_+$ and every $t\in (t_0, t_0+ \varphi_i(t_0,\gamma(t_0)))$. 
%     \item Assume in addition that $\widehat{\nabla \varphi}_i$ is
%     $m_i$ satisfies the first and third equation of \eqref{MFGs system} in the sense of distributions
% \end{enumerate}
Assume that, for every $i\in \{1, \dotsc, N \}$ and $t>0$, $\widehat{\nabla \varphi}_i(t, \cdot)$ exists and is continuous in the support of $m_i(t, \cdot)$, and that $(m_1, \dotsc ,m_N, \varphi_1 ,\dotsc ,\varphi_N)$ satisfies \eqref{MFGs system}, where the first equation is satisfied in the sense of distributions and the second equation is satisfied in the viscosity sense. Then there exists an equilibrium $\mathbf Q = (Q_1, \dotsc, Q_N) \in \mathcal P(\mathbf C(\mathbbm R_+; \mathbbm R^d))^N$ of $\MFG(\mathbf\Gamma, \mathbf K, \mathbf{m_0})$ such that, for every $i \in \{1, \dotsc, N\}$, $m_t^i = m_i(t, \cdot) = {e_t}_{\#} Q_i$ for every $t \geq 0$ and $\varphi_i$ is the value function of $\OCP(\Gamma_i, k_{\mathbf Q, i})$.
\end{theorem}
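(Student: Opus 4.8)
The plan is to reconstruct the vector of measures $\mathbf Q$ from the given data and to verify, using the results of Section~\ref{sec OCP}, that it is an equilibrium in the sense of Definition~\ref{Equilibrium}.

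\emph{Step 1: the associated optimal control problems and identification of $\varphi_i$.} For $i \in \{1, \dotsc, N\}$, set $m_t^i = m_i(t, \cdot)$, let $\hat m_t^i$ be given by \eqref{eq:hat_m_i}, and define $k_i(t, x) = K_i(m_t^i, \hat m_t^i, x)$. Since $m_0^i$ is compactly supported and $m_i$ solves the continuity equation in \eqref{MFGs system} with a velocity field bounded by $K_{\max}$, the support of $m_t^i$ stays in a fixed compact set on any compact time interval; together with the narrow continuity of $t \mapsto m_t^i$, this gives continuity of $t \mapsto m_t^i$ for the distance $\mathbf W_p$. Combining with \ref{HypoMFG-Gamma}, \ref{HypoMFG-K-Bound}, and \ref{HypoMFG-K-loc-Lip-Wass}, one checks that $k_i$ satisfies \ref{HypoOCP-Gamma}, \ref{HypoOCP-k-Bound}, and \ref{HypoOCP-k-loc-Lip}, so all results of Section~\ref{sec OCP} apply to $\OCP(\Gamma_i, k_i)$. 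Let $\tilde\varphi_i$ be its value function. By Proposition~\ref{thm H-J}, both $\varphi_i$ and $\tilde\varphi_i$ are continuous viscosity solutions of \eqref{H-J equation} on $\mathbbm R_+ \times (\mathbbm R^d \setminus \Gamma_i)$ vanishing on $\mathbbm R_+ \times \Gamma_i$; a short preliminary argument (comparing, on large balls, $\varphi_i$ with the value functions associated with the constant speeds $K_{\min}$ and $K_{\max}$) shows $\varphi_i$ has at most linear growth, after which a comparison principle for this Hamilton--Jacobi equation (see, e.g., \cite[Chapter~IV]{BardiDolcetta}) yields $\varphi_i = \tilde\varphi_i$. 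In particular $\varphi_i$ is locally Lipschitz (Proposition~\ref{varphi is Lipschitz}), and by Propositions~\ref{PropUSingleElement}, \ref{PropWNormalizedGrad}, \ref{PropNormalizedGradientContinuous} and Theorem~\ref{thm Ut_0,x_0}, the normalized gradient $\widehat{\nabla\varphi}_i$ of Definition~\ref{DefNormalizedGradient} is well defined and continuous on its domain $D_i \subset \mathbbm R_+ \times (\mathbbm R^d \setminus \Gamma_i)$, with all optimal trajectories of $\OCP(\Gamma_i, k_i)$ satisfying \eqref{eq:optimal-flow}.

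\emph{Step 2: building $Q_i$ by superposition.} Set $b_i = -k_i \widehat{\nabla\varphi}_i$, extended by $0$ on $\mathbbm R_+ \times \Gamma_i$ (consistently with mass being absorbed by the target); then $\abs{b_i} \leq k_i \leq K_{\max}$, $b_i(t, \cdot)$ is defined $m_t^i$-a.e.\ for a.e.\ $t$, and the continuity equation in \eqref{MFGs system} reads $\partial_t m_i + \diverg(m_i b_i) = 0$ on $\mathbbm R_+ \times \mathbbm R^d$. By the superposition principle (see, e.g., \cite[Theorem~8.2.1]{Ambrosio1}), applied on each interval $[0, n]$ and passing to a projective limit, there is $Q_i \in \mathcal P(\mathbf C(\mathbbm R^d))$ concentrated on $K_{\max}$-Lipschitz curves $\gamma$ with $\dot\gamma(t) = b_i(t, \gamma(t))$ for a.e.\ $t$ and ${e_t}_{\#} Q_i = m_t^i$ for every $t \geq 0$. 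In particular ${e_0}_{\#} Q_i = m_0^i$, and, by Fubini's theorem, for $Q_i$-a.e.\ $\gamma$ one has $\gamma(t) \in \supp m_t^i$ for a.e.\ $t$, with $\widehat{\nabla\varphi}_i(t, \gamma(t))$ defined for a.e.\ such $t$.

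\emph{Step 3: optimality of the curves and conclusion.} Fix $\gamma$ as above, put $x_0 = \gamma(0)$, and let $g(t) = \varphi_i(t, \gamma(t))$, which is Lipschitz since $\varphi_i$ is locally Lipschitz. Since $\dot\gamma = b_i(\cdot, \gamma)$ with $\abs{b_i} \leq k_i$, we have $\gamma \in \Adm(k_i)$, so by Proposition~\ref{dynamic prog-principle} the map $t \mapsto g(t) + t$ is nondecreasing; as $g \geq 0$, $\gamma$ reaches $\Gamma_i$ at some $\tau \leq g(0) = \varphi_i(0, x_0) < +\infty$, after which $\gamma$ is constant. For a.e.\ $t \in [0, \tau)$ that is a Lebesgue point of $\dot\gamma$ with $\gamma(t) \in \supp m_t^i \setminus \Gamma_i$, we have $\gamma(t+h) = \gamma(t) + h\, k_i(t, \gamma(t))\bigl(-\widehat{\nabla\varphi}_i(t, \gamma(t))\bigr) + o(h)$ as $h \to 0^+$; since $-\widehat{\nabla\varphi}_i(t, \gamma(t))$ is the unique element of $\mathcal W(t, \gamma(t))$ (Definition~\ref{DefNormalizedGradient}) and $\varphi_i$ is locally Lipschitz (absorbing the $o(h)$), the defining limit \eqref{eq:limit-def-W} gives $\lim_{h \to 0^+} \tfrac{g(t+h) - g(t)}{h} = -1$. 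As $g$ is Lipschitz with right derivative $-1$ a.e.\ on $[0, \tau)$, we get $g(t) = \varphi_i(0, x_0) - t$ there, so $\tau = \varphi_i(0, x_0) = \tilde\varphi_i(0, x_0)$ and equality holds in the dynamic programming principle along $\gamma$; by the last assertion of Proposition~\ref{dynamic prog-principle}, $\gamma \in \Opt(\Gamma_i, k_i, 0, x_0)$. Setting $\mathbf Q = (Q_1, \dotsc, Q_N)$, we thus have ${e_0}_{\#} \mathbf Q = \mathbf{m_0}$, $Q_i$-a.e.\ $\gamma$ optimal for $(\Gamma_i, k_i, 0, \gamma(0))$, and ${e_t}_{\#} Q_i = m_i(t, \cdot)$, so that $k_{\mathbf Q, i} = k_i$ and $\varphi_i$ is the value function of $\OCP(\Gamma_i, k_{\mathbf Q, i})$; hence $\mathbf Q$ is an equilibrium with the asserted properties.

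\emph{Main obstacle.} I expect two delicate points. The first is the identification $\varphi_i = \tilde\varphi_i$: a comparison principle is standard for this Hamilton--Jacobi equation, but it must be applied after establishing that the given continuous solution $\varphi_i$ has suitable behavior at infinity (the value function has linear growth by Proposition~\ref{psi}, but this is not among the hypotheses on $\varphi_i$), which requires the preliminary growth estimate mentioned in Step~1. The second, and more essential, is to guarantee that the curves produced by the superposition principle genuinely remain in $\supp m_t^i$ for a.e.\ time, so that $\widehat{\nabla\varphi}_i$ is defined along them and the identity $\mathcal W(t, \gamma(t)) = \{-\widehat{\nabla\varphi}_i(t, \gamma(t))\}$ may be invoked; it is precisely this that allows the proof to bypass any use of differentiability of $\varphi_i$ along $\gamma$ — which, as stressed in the paper, cannot be taken for granted — by replacing the chain rule with the one-sided limit \eqref{eq:limit-def-W}.
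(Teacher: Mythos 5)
Your proposal is correct and follows essentially the same route as the paper: identify $\varphi_i$ with the value function of $\OCP(\Gamma_i,k_i)$ via a uniqueness result for viscosity solutions of the Hamilton--Jacobi equation, apply a superposition principle to the continuity equation to lift $t\mapsto m^i_t$ to a measure $Q_i$ on paths, and then show that the integral curves of $-k_i\widehat{\nabla\varphi}_i$ realize equality in the dynamic programming principle using precisely the defining one-sided limit \eqref{eq:limit-def-W} rather than differentiability of $\varphi_i$. The only substantive difference is in Step~1: the paper invokes \cite[Chapter~IV, Corollary~4.3]{BardiDolcetta} directly after the state-augmentation reduction, which establishes $\varphi_i = \tilde\varphi_i$ from lower-boundedness and the boundary condition alone, so your preliminary growth estimate and ball-by-ball comparison — while a plausible alternative — are not actually needed; you also cite a slightly different version of the superposition principle and handle the domain $[0,+\infty)$ by a projective limit, where the paper invokes \cite[Theorem~3.2]{Ambrosio2008Transport} directly, but these are cosmetic.
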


\begin{proof}
Let $k_i: \mathbbm R_+ \times \mathbbm R^d \to \mathbbm R_+$ be defined for $(t, x) \in \mathbbm R_+ \times \mathbbm R^d$ by $k_i(t, x) = K_i(m_t^i, \hat m_t^i, x)$, where $\hat m_t^i$ is defined as in \eqref{eq:hat_m_i}, and consider the optimal control problem $\OCP(\Gamma_i, k_i)$. Since $\varphi_i$ is lower bounded by $0$, satisfies the second equation of \eqref{MFGs system} in the viscosity sense, and also satisfies the fourth equation of \eqref{MFGs system}, we deduce from \cite[Chapter~IV, Corollary~4.3]{BardiDolcetta} that $\varphi_i$ is the value function of $\OCP(\Gamma_i, k_i)$. Note that \cite[Chapter~IV, Corollary~4.3]{BardiDolcetta} is stated for autonomous control systems, but it can be applied to the non-autonomous control system $\dot\gamma(t) = k_i(t, \gamma(t)) u(t)$ by considering the augmented state $\tilde x(t) = (t, \gamma(t))$. Moreover, \cite[Chapter~IV, Corollary~4.3]{BardiDolcetta} assumes that the target set has compact boundary, but we get the conclusion in our framework by reasoning locally and using Proposition~\ref{psi}.

For $i \in \{1, \dotsc, N\}$, since $m_i$ satisfies the continuity equation in \eqref{MFGs system} in the sense of distributions and the corresponding velocity field is bounded by $K_{\max}$, it follows from the Superposition Principle for continuity equations (see \cite[Theorem~3.2]{Ambrosio2008Transport}) that there exists $Q_i \in \mathcal P(\mathbf C(\mathbbm R_+; \mathbbm R^d))$ such that $m_t^i = {e_t}_{\#} Q_i$ for every $t \geq 0$. Let $\mathbf Q = (Q_1, \dotsc, Q_N)$. Note that, since $m_0^i$ is compactly supported and the velocity field in the continuity equation is bounded, $m_t^i$ is also compactly supported, and thus $\mathbf Q \in \mathfrak Q$. We will show that $\mathbf Q$ is an equilibrium of $\MFG(\mathbf\Gamma, \mathbf K, \mathbf{m_0})$ by showing that $\mathbf Q \in F(\mathbf Q)$, i.e., that $Q_i$ is supported on $\Opt_i(\mathbf Q)$.

To see that, notice that, from the proof of \cite[Theorem~3.2]{Ambrosio2008Transport}, it also follows that $Q_i$ is concentrated on the solutions of $\Dot{\gamma}(t) = -k_i(t, \gamma(t)) \widehat{\nabla \varphi}_{i}(t, \gamma(t))$, which are clearly admissible trajectories for $\OCP(\Gamma_i, k_i)$ since $\abs{\widehat{\nabla \varphi}_{i}(t, \gamma(t))} = 1$.
We prove that such trajectories are optimal by showing that they satisfy the equality in the dynamic programming principle \eqref{eq:DPP}. Let $\gamma$ be such a trajectory and notice that it is $K_{\max}$ Lipschitz continuous. From the definition of normalized gradient, we have
$$
\lim_{h\to 0^+} \frac{\varphi_{i}\bigl(t+h, \gamma(t) - h k_i(t, \gamma(t))\widehat{\nabla \varphi}_{i} (t,\gamma(t))\bigr) -\varphi_{i}(t,\gamma(t))}{h}=-1.
$$
Using the facts that 
$\gamma(t+h)=\gamma(t) + h \Dot{\gamma}(t)+o(h)$
and that $\varphi_i$ is Lipschitz continuous, we deduce that
$$
\lim_{h\to 0^+} \frac{\varphi_{i}(t+h, \gamma(t+h))-\varphi_{i}(t,\gamma(t))}{h}=-1.
$$
Since $t\mapsto \varphi_{i}(t, \gamma(t))$ is Lipschitz continuous, and hence differentiable almost everywhere, we deduce that
$\frac{\diff}{\diff t} \varphi_{i}(t,\gamma(t))=-1$
a.e., and thus, integrating the above expression from $t$ to $t + h$, we get that 
$\varphi_{i}(t+h, \gamma(t+h)) - \varphi_{i}(t, \gamma(t)) = -h$,
and therefore, by Proposition~\ref{dynamic prog-principle}, $\gamma$ is optimal for $\OCP(\Gamma_i, k_i)$. Hence $Q_i$ is concentrated on $\Opt_i(\mathbf Q)$, concluding the proof that $\mathbf Q$ is an equilibrium.
\end{proof}

\bibliographystyle{abbrv}
\bibliography{main}

\end{document}